\definecolor{mno}{rgb}{0.5,0.1,0.5}
\newcommand{\R}{\mathds R}
\newcommand{\Pp}{\mathds P}
\newcommand{\Ee}{\mathds E}
\newcommand{\I}{\mathds 1}
\def\<{\langle}
\def\>{\rangle}
\newcommand{\var}{\operatorname{Var}}
\newtheorem{theorem}{Theorem}[section]
\newtheorem{lemma}[theorem]{Lemma}
\newtheorem{proposition}[theorem]{Proposition}
\newtheorem{corollary}[theorem]{Corollary}
\theoremstyle{definition}
\newtheorem{remark}[theorem]{Remark}
\begin{document}
\allowdisplaybreaks
\title[Exponential ergodicity for McKean-Vlasov SDEs with L\'{e}vy noise]
{\bfseries Exponential ergodicity for SDEs and McKean-Vlasov processes with L\'{e}vy noise}

\author{Mingjie Liang\qquad Mateusz B. Majka \qquad Jian Wang}

\thanks{\emph{M.\ Liang:}
College of Information Engineering,  Sanming
University, 365004, Sanming, P.R. China. \texttt{liangmingjie@aliyun.com}}
\thanks{\emph{M.\ B.\ Majka:}
School of Mathematical and Computer Sciences, Heriot-Watt University, Edinburgh, EH14 4AS, UK. \texttt{m.majka@hw.ac.uk}}

\thanks{\emph{J.\ Wang:}
School of Mathematics and Computer Science \& Fujian Key Laboratory of Mathematical Analysis and Applications (FJKLMAA) \& Center for Applied Mathematics of Fujian Province (FJNU), Fujian Normal
University, 350007, Fuzhou, P.R. China.
\texttt{jianwang@fjnu.edu.cn}}
\date{}

\begin{abstract}
We study exponential ergodicity of a broad class of stochastic processes whose dynamics are governed by pure jump L\'{e}vy noise. In the first part of the paper we focus on solutions of stochastic differential equations (SDEs) whose drifts satisfy general Lyapunov-type conditions. By applying techniques that combine couplings, appropriately constructed $L^1$-Wasserstein distances and Lyapunov functions, we show exponential convergence of solutions of such SDEs to their stationary distributions, both in the total variation and Wasserstein distances. The second part of the paper is devoted to SDEs of McKean-Vlasov type with distribution dependent drifts. We prove a uniform in time propagation of chaos result, providing quantitative bounds on convergence rate of interacting particle systems with L\'{e}vy noise to the corresponding McKean-Vlasov SDE. Then, extending our techniques from the first part of the paper, we obtain results on
 exponential
 ergodicity of solutions of McKean-Vlasov SDEs, under general conditions on the drift and the driving noise.

\medskip

\noindent\textbf{Keywords:} exponential ergodicity; L\'{e}vy noise; coupling; McKean-Vlasov process;
mean-field SDE; propagation of chaos

\medskip

\noindent \textbf{MSC 2010:} 60H10; 60J25; 60J75.
\end{abstract}

\maketitle
\allowdisplaybreaks

\section{Introduction and main results}\label{section1}

In recent years, the study of ergodicity
for solutions of stochastic differential equations (SDEs) with L\'{e}vy noise has attracted considerable interest (see e.g.\ \cite{Mas, Kul, JWang, Maj, Lwcw, Maj2, APS1, APS2}).
The most widespread tools in this area of research are the Meyn-Tweedie approach based on showing irreducibility and aperiodicity of the
 associated process (\cite{Mas, APS1, APS2}), and the coupling approach (\cite{Maj, Lwcw, Maj2}). For SDEs in $\R^d$ of the form
\begin{equation}\label{SDE1}
dX_t = b(X_t)dt + dZ_t \,,
\end{equation}
where $b: \R^d \to \R^d$ is the drift and $(Z_t)_{t\ge0}$ is a $d$-dimensional pure jump
L\'{e}vy process, the Meyn-Tweedie approach allows for studying
exponential
convergence under rather general conditions on the drift, both in the total variation (see e.g.\ \cite[Corollary 5.2]{APS1}) and the Wasserstein distances (see \cite[Theorem 1.3]{APS2} with the special case of $a(x)\equiv0$ and $\nu(x,dy)=\nu(dy)$ for all $x\in \R^d$). Namely, the authors of \cite{APS1, APS2} considered conditions of the form
\begin{equation}\label{drift1} \langle x, b(x) \rangle \leq - \lambda |x|^2
\end{equation} for some constant $\lambda > 0$ and all $x \in \R^d$ with large enough $|x|$. The downside is that in order to show irreducibility and aperiodicity of solutions to (\ref{SDE1}), both \cite{APS1} and \cite{APS2} cover only pure jump L\'{e}vy noises that are either compound Poisson processes or contain a symmetric $\alpha$-stable component;
see \cite[Section 4]{APS1} and \cite[Section 3.3]{APS2} for details. Moreover, usually it is difficult to quantify
 convergence rates obtained via such methods (cf.\ \cite[Section 3.1]{EGZ}). On the other hand, by applying the coupling technique, \cite{Lwcw, Maj, Maj2} considered SDEs with a much more general class of driving noises, while requiring that the drift $b(x)$ in (\ref{SDE1}) satisfies conditions of the form
\begin{equation}\label{drift2}
\langle x - y , b(x) - b(y) \rangle \leq - K|x-y|^2
\end{equation}
for all $x$, $y \in \R^d$ such that $|x-y| > R$, with some constants $K$, $R > 0$. Note that (\ref{drift2}) obviously implies (\ref{drift1}). In the present paper, we combine the coupling approach from \cite{Lwcw} with techniques recently developed for diffusions in \cite{EGZ}, in order to study
 exponential convergence of solutions to SDEs of the form (\ref{SDE1}) under conditions on the drift such as (\ref{drift1}), and for a very general class of L\'{e}vy processes $(Z_t)_{t \geq 0}$, thus considerably extending the results on exponential ergodicity from all the papers \cite{APS1,APS2,Lwcw,Maj,Maj2}. To this end, we will use the $L^1$-Wasserstein distance defined for any probability measures $\mu_1$, $\mu_2$ on $\R^d$ by
$$
W_1(\mu_1,\mu_2)=\inf_{\Pi\in \mathscr{C}(\mu_1,\mu_2)}\int_{\R^d\times\R^d} |x-y| \,d\Pi(x,y),
$$
where $\mathscr{C}(\mu_1,\mu_2)$ is the family of all measures on
$\R^d\times\R^d$ having $\mu_1$ and $\mu_2$ as marginals.
We will also denote by $\|\mu_1-\mu_2\|_{\var}$
the total variation distance between probability measures $\mu_1$ and $\mu_2$.
Everywhere in this paper we will assume that \emph{\eqref{SDE1} has a unique strong solution}.
This is the case for any L\'{e}vy noise $(Z_t)_{t \geq 0}$ if we assume that $b(x)$ is continuous and that
\begin{equation}\label{drift3}
\langle x - y , b(x) - b(y) \rangle \leq C|x-y|^2
\end{equation}
for all $x$, $y \in \mathbb{R}^d$, with some constant $C > 0$
(see e.g.\ \cite{GK, Maj3}; see also the discussion in Section 2 for results under different assumptions on the drift).
Moreover, for the L\'{e}vy measure $\nu$ of the process $(Z_t)_{t \geq 0}$ we define
\begin{equation}\label{defJ}
J(r) := \inf_{|x| \leq r} \left( \nu \wedge (\delta_x \ast \nu) \right) (\R^d),\quad r>0.
\end{equation}
We have the following result.

\begin{theorem}\label{mainSDEresult}
	Let $(X_t)_{t \geq 0}$ be
 the unique strong
 solution to \eqref{SDE1} with the drift $b(x)$ satisfying  \eqref{drift1} and \eqref{drift3}. Suppose that for the L\'{e}vy measure $\nu$ of $(Z_t)_{t \geq 0}$,
\begin{equation}\label{e:conitionv}\int_{\{|z|\ge1\}}|z|\,\nu(dz)<\infty\end{equation} and
 there is a constant $\kappa_0 \in (0,1]$ such that $J(\kappa_0) > 0$. Let $P_t(x,\cdot)$ be the law of $X_t$ for $t > 0$ when $X_0 = x$ for some $x \in \R^d$. Then there exist a unique invariant probability measure $\mu$ for $(X_t)_{t \geq 0}$ and a constant $\lambda > 0$ such that
	\begin{equation*}
	\| P_t(x,\cdot) - \mu \|_{\var} \leq c_1(x) e^{-\lambda t}
	\end{equation*}
	and
	\begin{equation*}
	W_1(P_t(x,\cdot), \mu) \leq c_2(x) e^{-\lambda t}
	\end{equation*}
	hold for all $x\in \R^d$ and $t > 0$, where $c_1(x)$ and $c_2(x)$ are non-negative measurable functions on $\R^d$.
\end{theorem}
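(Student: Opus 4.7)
The plan is to adapt the coupling strategy of \cite{Lwcw} while incorporating the concave-distance technique introduced for diffusions in \cite{EGZ}. The core object is a semimetric of the form
\begin{equation*}
\rho(x,y)=f(|x-y|)\bigl(1+\varepsilon\bigl(V(x)+V(y)\bigr)\bigr),
\end{equation*}
where $V:\R^d\to[1,\infty)$ is a Lyapunov function, $f:[0,\infty)\to[0,\infty)$ is bounded, concave, nondecreasing, with $f(0)=0$ and $f'(0+)\in(0,\infty)$, and $\varepsilon>0$ is a small parameter. Condition \eqref{drift1} together with \eqref{drift3} and the first-moment assumption \eqref{e:conitionv} allows one to pick $V(x)=1+|x|^2$ (or a smoothed variant) for which the extended generator $\mathcal L$ of \eqref{SDE1} satisfies $\mathcal L V(x)\le -c_1V(x)+c_2$ on all of $\R^d$; the role of the Lyapunov factor in $\rho$ is precisely to turn the merely asymptotic dissipativity \eqref{drift1} into a uniform contraction, while $f$ is engineered so that the contribution from jump-coupling dominates near the diagonal.

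The first step is to construct an explicit Markov coupling $(X_t,Y_t)$. Using a refined basic coupling in the spirit of \cite{Lwcw}, the driving noise $dZ_t$ is realized jointly so that: on the set $\{|X_{t-}-Y_{t-}|\le\kappa_0\}$ the jump measure is split into a synchronous part and a ``mirror'' part of intensity at least $J(\kappa_0)$ that attempts, at each firing, to send $Y$ exactly onto $X$; outside $\{|X_{t-}-Y_{t-}|\le\kappa_0\}$ the two copies share the same jumps. The hypothesis $J(\kappa_0)>0$ is exactly what makes the coupling jump contribution nontrivial and, as a by-product, ensures positive probability of coalescence at each attempt, which will give the total variation bound.

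The second step is to compute $\mathcal L\rho$ along this coupling via the jump It\^o formula. The computation splits into three contributions: (i) a drift term coming from $b$, proportional to $f'(|x-y|)\langle (x-y)/|x-y|,b(x)-b(y)\rangle$, which is strongly negative for $|x-y|$ large thanks to \eqref{drift1}; (ii) a coupling-jump term, bounded above by $-J(\kappa_0)f(|x-y|)$ on $\{|x-y|\le\kappa_0\}$ by construction; (iii) cross terms between $f$ and the Lyapunov weights, controlled using the global bound $\mathcal LV\le -c_1V+c_2$ and \eqref{drift3}. Choosing $f$ piecewise, linear on $[0,R_1]$, strictly concave on $[R_1,R_2]$, and constant on $[R_2,\infty)$ (with thresholds calibrated to the Lyapunov sublevel sets), and $\varepsilon$ small enough to absorb the cross terms, one obtains
\begin{equation*}
\mathcal L\rho(x,y)\le -\lambda\rho(x,y),\qquad x,y\in\R^d,
\end{equation*}
for some $\lambda>0$, and Gronwall yields $\Ee\rho(X_t,Y_t)\le e^{-\lambda t}\rho(X_0,Y_0)$. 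Since $f(r)\asymp r\wedge 1$ and $\rho$ dominates (a constant times) $|x-y|\wedge 1$, the inequality $\|P_t(x,\cdot)-P_t(y,\cdot)\|_{\var}\le 2\Pp(X_t\ne Y_t)$ combined with the above estimate gives exponential decay in total variation, while the bound $\rho(x,y)\ge c f(|x-y|)$ together with a Krylov-Bogoliubov argument based on $V$ produces an invariant measure $\mu$, whose uniqueness and the claimed $W_1$ estimate follow by taking $Y_0\sim\mu$ and using that $f(r)$ grows linearly near $0$.

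The hard part will be step (ii): balancing the three contributions above in one concave function $f$ while keeping track of the large-jump term in the It\^o expansion of $f(|X_t-Y_t|)$. Here \eqref{e:conitionv} is indispensable, since it permits bounding the integral $\int_{|z|\ge 1}\bigl[f(|x-y+z-z|)-f(|x-y|)\bigr]\,\nu(dz)$-type remainders arising from the synchronous part of the coupling, and makes the jump contribution genuinely lower order compared with $\lambda\rho$. Matching the free parameters $R_1,R_2,\varepsilon$ so that this remainder, the Lyapunov cross terms, and the dissipation from \eqref{drift1} combine into a single uniform contraction rate is the central technical task.
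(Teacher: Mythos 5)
Your plan is essentially the multiplicative-distance scheme of Subsection \ref{Sec2.3} of the paper (Theorem \ref{th2:le}), and it has a genuine gap precisely at the step where you claim to extract total variation convergence. The semimetric
$\rho(x,y)=f(|x-y|)(1+\varepsilon(V(x)+V(y)))$ vanishes continuously as $x\to y$, so the Gronwall estimate $\Ee\rho(X_t,Y_t)\le e^{-\lambda t}\rho(x,y)$ gives no control of $\Pp(X_t\neq Y_t)$: Markov's inequality bounds $\Pp(\rho(X_t,Y_t)>\delta)$ for $\delta>0$ but says nothing at $\delta=0$, and indeed a synchronous coupling can have $|X_t-Y_t|\to 0$ while $\Pp(X_t\neq Y_t)\equiv 1$. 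The coupling inequality $\|P_t(x,\cdot)-P_t(y,\cdot)\|_{\var}\le 2\Pp(X_t\neq Y_t)$ is correct but useless without a reference function that is bounded below by a positive constant off the diagonal. The paper resolves this by using the \emph{additive} reference function $\Phi_n(x,y)\approx a+\psi(|x-y|)+\varepsilon(V(x)+V(y))$ for $|x-y|\ge 1/n$ (with a deliberate jump discontinuity of size $a>0$ at the diagonal in the limit $n\to\infty$), so that $\Phi_\infty(x,y)\asymp(V(x)+V(y))\I_{\{x\neq y\}}$ dominates $\I_{\{x\neq y\}}$; this yields convergence in the weighted total variation norm $\|\cdot\|_{\operatorname{Var},V}$, which then simultaneously dominates both $\|\cdot\|_{\var}$ (since $V\ge 1$) and $W_1$ (since $V(x)\ge|x|$). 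Your $\rho$ with $f(r)\asymp r\wedge 1$ only dominates $|x-y|\wedge 1$, hence only gives a $W_1$-type bound.

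A secondary issue is the Lyapunov function: you propose $V(x)=1+|x|^2$, but with only $\int_{\{|z|\ge 1\}}|z|\,\nu(dz)<\infty$ the term $\int(V(x+z)-V(x))\,\nu(dz)$ is not finite for a quadratic $V$; one needs a function that grows linearly at infinity, as in Lemma \ref{lem:var2}, where $V(x)=1+|x|$ for $|x|\ge 1$ and $\|\nabla V\|_\infty+\|\nabla^2 V\|_\infty<\infty$. Apart from these two points, your coupling construction and the three-way decomposition of $\widetilde{L}\rho$ are sound and match the paper's machinery; if you switch to the additive $\Phi_n$ (allowing a positive constant $a$ off the diagonal so that the coupling jump term contributes a fixed negative mass $-\tfrac{a}{2}J(\kappa)$ when $|x-y|\le\kappa$) and to a linearly growing $V$, the argument closes as in Theorem \ref{th1:le} and Corollary \ref{co1:le}.
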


Note that the assumption that $J(\kappa_0) > 0$ for some $\kappa_0 \in (0,1]$ is very weak (see e.g.\ \cite[Proposition 1.5]{SW2} and \cite[Remark 1.7]{Maj}), and hence Theorem \ref{mainSDEresult} covers a much larger class of L\'{e}vy processes than \cite{APS1,APS2,Lwcw,Maj,Maj2}.
On the other hand, we note that exponential ergodicity in terms of the total variation norm for solutions to (more general) SDEs with jump noise has been studied in \cite{Kul} via the local Doeblin condition, which is expressed in terms of the transition probability of the associated process (see condition {\bf (LD)} in \cite{Kul}). It was shown in \cite[Theorem 1.3]{Kul} that the local Doeblin condition can be derived from non-degeneracy condition {\bf N} and topological irreducibility condition {\bf S}, both of which essentially require non-degenerate properties of the L\'evy measure $\nu$ near zero; see \cite[Section 4]{Kul} for details.
In view of this, it seems that our assumption that $J(\kappa_0)>0$ for some $\kappa_0\in (0,1]$ in Theorem \ref{mainSDEresult} (which can be interpreted as requiring a sufficient mass of the L\'evy measure near zero)
looks simpler and more direct.
 We also remark that conditions (\ref{drift1}) and (\ref{drift3}) can be weakened even further (cf.\ Theorem \ref{th1:le} below). All the technical details and the proof of Theorem \ref{mainSDEresult} can be found in Section \ref{suce1}.
Besides, we would like to mention that \eqref{e:conitionv} implies that for any $x\in \R^d$ and $t>0$, $\Ee^x|X_t|<\infty$, which is needed for proving the exponential ergodicity of the process $(X_t)_{t\ge0}$ in terms of the $L^1$-Wasserstein distance. However, \eqref{e:conitionv} can be relaxed (for example, to the condition $\int_{\{|z|\ge1\}}|z|^\alpha\,\nu(dz)<\infty$ with some $\alpha\in (0,1)$; see \cite{W08, W12}) if one is interested in studying exponential ergodicity only in terms of the total variation norm.

\smallskip

The second part of this paper is devoted to stochastic differential equations on $\R^d$ of McKean-Vlasov type, i.e.,
\begin{equation}\label{non-1}
\begin{cases}
dX_t=b(X_t, \mu_t)\,dt+dZ_t, &X_0 \sim \mu_0,\\
\mu_t=\operatorname{Law}(X_t),
\end{cases}
\end{equation}
where  $b: \R^d \times \mathscr{P}(\R^d) \to \R^d$ is a measurable
function and $(Z_t)_{t\ge0}$ is a $d$-dimensional pure jump L\'evy
process. Here $\mathscr{P}(\R^d)$ denotes the family of all
probability measures on $\R^d$. The solution $(X_t)_{t\ge0}$ to
\eqref{non-1} is a nonlinear Markov process \cite{Kol} in the sense of
McKean-Vlasov, i.e.,\ the associated transition function depends
both on the current state $X_t$ and on the law of $X_t$. Equations
of the form \eqref{non-1} driven by Brownian motion have recently
attracted considerable attention, see e.g. \cite{BR, DST, EGZ, HSS, HRW,
Wang} and the references therein, as well as the monograph
\cite{Carmona}. The case in which the driving process has jumps has been much less studied, however, with the foundations set by \cite{JMW},
several papers on the topic have appeared in recent years, see e.g.
\cite{AP, ADF, BCD, CHM, HaoLi, Li, MSSZ, Song}. By analogy to the
relation between McKean-Vlasov SDEs with Brownian noise and
nonlinear Fokker-Planck equations \cite{BR, HRW}, McKean-Vlasov SDEs
with jumps can be related to nonlocal integral-PDEs \cite{HaoLi,
JMW, Li}. They have also found applications in areas such as
financial mathematics \cite{BCD} and
neural  networks \cite{MSSZ}.
Such equations, regardless of the driving noise, are known in the
literature under numerous different names, including McKean-Vlasov
SDEs \cite{ADF, Bu, EGZ, HSS, Gra}, mean-field SDEs \cite{CHM,
HaoLi,Li, Song}, distribution dependent SDEs (DDSDEs) \cite{BR,
Wang} and nonlinear SDEs \cite{JMW}. The choice of the name depends
on the preferred interpretation of the process $(X_t)_{t\ge0}$ and
different names are often used interchangeably.

  A typical example of the drift in
\eqref{non-1} is
\begin{equation}\label{e:dri}b(x, \mu)=b_1(x) +\displaystyle \int b_2(x, z)\,\mu(dz)\end{equation} for some measurable functions $b_1: \R^d \to \R^d$ and $b_2: \R^d \times \R^d \to \R^d$.
In such case the corresponding McKean-Vlasov SDE arises naturally as
a marginal limit as $n\to \infty $ of the mean-field interacting
particle system
\begin{equation}\label{e:mean}dX_t^{i,n}=b_1(X_t^{i,n})\,dt+\frac{1}{n}\sum_{j=1}^nb_2(X_t^{i,n}, X_t^{j,n})\,dt +dZ^i_t,\quad i=1,\ldots,n\end{equation} driven by independent L\'evy processes $(Z^i_t)_{t\ge0}$. This property was named \emph{propagation of chaos}
 by Kac \cite{Kac} and was further developed by Sznitman \cite{Sznit}.
  Propagation of chaos has been studied extensively in the Brownian setting, see e.g.
  \cite{CGM, DEGZ, Me} and the references therein. In the L\'{e}vy jump case,
  the result of such kind
  has been proved in \cite{JMW} for McKean-Vlasov (nonlinear) SDEs
  even with distribution dependent noise coefficients.
  See also \cite{ADF} for a recent study on nonlinear jump diffusions with finite jump activity. However, the bounds obtained in \cite{ADF, JMW} are uniform only on a finite time interval. In the present paper, we adapt the method from \cite{DEGZ} to show uniform in time convergence,
 as well as its explicit quantitative rate,
  of the particle system \eqref{e:mean} in the $L^1$-Wasserstein distance.

We work under relatively weak assumptions on the drift and the driving L\'{e}vy process. Namely, \emph{for the term $b_1(x)$ in the drift $b(x,\mu)$ given by \eqref{e:dri} we assume
\begin{itemize}
    \item[(1-i)]
$b_1(x)$ is continuous on $\R^d$, and
there are constants $K_{1,b_1},r_{b_1}\ge 0$ and $K_{2,b_1}>0$ such that for any $x,y\in \R^d$, \begin{equation}\label{cond:drif1}\langle b_1(x)-b_1(y),x-y\rangle \le
    K_{1,b_1}|x-y|^2\I_{\{|x-y|\le r_{b_1}\}}-K_{2,b_1}|x-y|^2\I_{\{|x-y| > r_{b_1}\}}.\end{equation}
\end{itemize}
Furthermore, for
the term $b_2(x,\mu)$ in \eqref{e:dri} we require
\begin{itemize}
    \item[(1-ii)] There exists a function $\tilde b_2 : \R^d \to \R^d$ with $\tilde b_2(0) = 0$ such that $b_2(x,z) = \tilde b_2(x-z)$ for any $x$, $z \in \R^d$, and there is a constant $K_{\tilde b_2}>0$ such that
    for all $x$, $y \in \R^d$,
    \begin{equation}\label{coe-b3}|\tilde b_2(x)-\tilde b_2(y)|\le K_{\tilde b_2}|x-y|.\end{equation}
\end{itemize}
Finally, for the L\'{e}vy measure $\nu$ of the L\'{e}vy process $(Z_t)_{t \geq 0}$ we consider the function $J(r)$ defined in  \eqref{defJ} and we assume the following conditions:
\begin{itemize}
    \item[(1-iii)] There exists a constant $\alpha \in (0,1)$ such that
    \begin{equation}\label{assumption:concentrationLevymeasure}
    \lim_{r\to 0}\inf_{s\in (0,r]} J(s)s^\alpha >0;
    \end{equation}
    moreover,
    \begin{equation*}
    \int_{\{ |z| > 1 \}} |z|^2 \,\nu(dz) < \infty.
    \end{equation*}
\end{itemize}}
We now consider the McKean-Vlasov equation \eqref{non-1} with the drift \eqref{e:dri} and the corresponding mean-field particle system defined by \eqref{e:mean} with initial values $X_0 \sim \mu_0$ and $X_0^{i,n} \sim \mu_0^n$, $1 \leq i \leq n$, respectively,
 which are i.i.d. random variables having finite second moments. Moreover,
  $(Z_t^i)_{t\ge0}$, $1\le i\le n$,
  are i.i.d.\ L\'evy processes with the same law as $(Z_t)_{t\ge0}$. Under
  the assumptions above, we can prove that both \eqref{non-1} and \eqref{e:mean} have unique strong solutions, which we denote by $(X_t)_{t \geq 0}$ and $(X_t^{i,n})_{t \geq 0}$,
   $1\le i\le n$, respectively;
  moreover, for any $t_0>0$ and $1\le i\le n$, $\sup_{0<t\le t_0}\Ee(|X_t|^2+|X_t^{i,n}|^2)<\infty$.
To see this, one can follow the standard arguments of
 \cite[Theorem 2.1]{Wang}, \cite[Theorem 2.1]{Gra} and \cite[Proposition 1.2]{JMW},
 see the proof of Proposition \ref{P:exist--} in Section \ref{section3} for details.
 Note that all the processes $(X_t^{i,n})_{t \geq 0}$, $1\le i\le n$, have the same marginal laws due to the uniqueness of solutions to the SDE \eqref{e:mean}.
   In the present paper, we prove the following result on uniform in time propagation of chaos for weakly interacting mean-field particle systems with L\'evy jumps.

\begin{theorem}\label{Main-4}
Suppose that assumptions $(1$-{\rm i}$)$--$(1$-{\rm iii}$)$ hold.
Denote by $(X_t)_{t \geq 0}$ and $(X_t^{i,n})_{t \geq 0}$,
 $1\le i\le n$,
 the unique strong solutions to \eqref{non-1} and \eqref{e:mean} with initial distributions $\mu_0$ and $\mu_0^n$,
 respectively. Suppose that $\mu_0$ and $\mu_0^n$ have finite first moments.
Let $\mu_t$ be the marginal law of $X_t$, and
$\mu_t^{n}$ the marginal law of $X^{i,n}_t$ for any $t > 0$ and any $1\le i\le n$.
    Then there exists a constant $K^*_{\tilde b_2}>0$ such that for any
    $K_{\tilde b_2}\in(0,K^*_{\tilde b_2}]$, $t>0$ and $n\ge 2$,
    $$W_1(\mu_t,\mu_{t}^n)\le C_0e^{-\lambda t} W_1(\mu_0,\mu_0^n)+C_0n^{-1/2},\quad t>0$$
holds for some positive constants $\lambda$ and $C_0$
independent of $t$ and $n$. \end{theorem}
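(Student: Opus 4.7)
The plan is to adapt the synchronous coupling strategy of \cite{DEGZ} to the present L\'evy setting, combining it with the coupling-based semimetric developed for Theorem \ref{mainSDEresult}. On the same probability space carrying the particle system \eqref{e:mean}, I would first introduce $n$ copies $(\tilde X_t^i)_{t\ge 0}$, $1\le i\le n$, of the McKean--Vlasov solution driven by the \emph{same} L\'evy processes $(Z_t^i)_{t\ge 0}$ as the particles, with $\tilde X_0^i\sim \mu_0$ taken to be an independent i.i.d.\ family. Then $\tilde X_t^i\sim \mu_t$ for every $t$ and $i$, and
\[
W_1(\mu_t,\mu_t^n)\le \Ee|X_t^{1,n}-\tilde X_t^1|.
\]
To inherit the exponential contraction from the one-body dynamics, I would not estimate $\Ee|X_t^{1,n}-\tilde X_t^1|$ directly but rather $u(t):=\Ee f(|X_t^{1,n}-\tilde X_t^1|)$, where $f$ is the concave distortion function built from the coupling construction used in the proof of Theorem \ref{mainSDEresult}; this function is comparable to the identity on $[0,\infty)$, so a bound on $u$ translates into a $W_1$ bound.

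The key algebraic step is to decompose the interaction discrepancy driving the dynamics of $X_t^{i,n}-\tilde X_t^i$ as
\[
\frac{1}{n}\sum_{j=1}^n\tilde b_2(X_t^{i,n}-X_t^{j,n})-\int \tilde b_2(\tilde X_t^i-z)\,\mu_t(dz)=A_t^i+B_t^i,
\]
with the Lipschitz part
\[
A_t^i=\frac{1}{n}\sum_{j=1}^n\bigl[\tilde b_2(X_t^{i,n}-X_t^{j,n})-\tilde b_2(\tilde X_t^i-\tilde X_t^j)\bigr],
\]
controlled by $K_{\tilde b_2}(|X_t^{i,n}-\tilde X_t^i|+\frac{1}{n}\sum_{j}|X_t^{j,n}-\tilde X_t^j|)$ thanks to \eqref{coe-b3}, and the fluctuation part
\[
B_t^i=\frac{1}{n}\sum_{j=1}^n\tilde b_2(\tilde X_t^i-\tilde X_t^j)-\int \tilde b_2(\tilde X_t^i-z)\,\mu_t(dz),
\]
whose conditional $L^2$-norm is $O(n^{-1/2})$ uniformly in $t$ by independence of the $\tilde X^j$, the Lipschitz bound on $\tilde b_2$, and the uniform second-moment bound on $\tilde X_t^i$ (which follows from \eqref{cond:drif1}, \eqref{coe-b3} and the moment assumption in (1-iii)). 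Using exchangeability so that $\Ee f(|X_t^{i,n}-\tilde X_t^i|)=u(t)$ for every $i$, and applying the coupling for $(X^{i,n},\tilde X^i)$ transferred from the proof of Theorem \ref{mainSDEresult} (which together with the L\'evy-type It\^o formula yields a dissipative bound from $b_1$ in the semimetric $f$), one derives a differential inequality of the form
\[
\frac{d^+}{dt}u(t)\le -\lambda_0\, u(t)+C\,K_{\tilde b_2}\,u(t)+C\,n^{-1/2},
\]
where $\lambda_0>0$ is the one-body contraction rate. Choosing $K^*_{\tilde b_2}$ so that $\lambda:=\lambda_0-CK^*_{\tilde b_2}>0$ for all $K_{\tilde b_2}\in(0,K^*_{\tilde b_2}]$ and invoking Gronwall yields
\[
u(t)\le e^{-\lambda t}u(0)+C_0 n^{-1/2},
\]
and the comparability of $f$ with the identity then gives the stated bound.

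The main obstacle I anticipate is the consistent implementation of the coupling between $(X_t^{i,n})$ and $(\tilde X_t^i)$ \emph{simultaneously for all $i$}: the coupling constructed in Section \ref{suce1} for Theorem \ref{mainSDEresult} splits the driving jump measure and must be reused for each index $i$ while preserving (i) the joint law of the L\'evy drivers $(Z^i)_{i\le n}$, (ii) the marginal independence of the family $(\tilde X_t^i)_{i\le n}$, and (iii) a tractable semimartingale decomposition of $f(|X_t^{i,n}-\tilde X_t^i|)$ so that $b_1$ produces the desired $-\lambda_0 u(t)$ term. A secondary difficulty is the uniform-in-$t$ second-moment control of $X_t^{i,n}$ and $\tilde X_t^i$ used in estimating $B_t^i$; this will follow by combining \eqref{cond:drif1} (providing dissipativity of $b_1$ outside a ball), the Lipschitz bound \eqref{coe-b3} on $\tilde b_2$ (with the interaction of small enough amplitude $K_{\tilde b_2}$), and the second-moment assumption on $\nu$ in (1-iii), via a standard Lyapunov argument applied to $|x|^2$.
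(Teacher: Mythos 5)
Your overall strategy is the same as the paper's: couple i.i.d.\ copies $\bar X^i$ of the McKean--Vlasov solution against the particles $X^{i,n}$, split the interaction error into a Lipschitz term and an $O(n^{-1/2})$ fluctuation term, and close a Gronwall inequality in a concave distortion of the distance with the one-body contraction rate $\lambda_0$ surviving when $K_{\tilde b_2}$ is small. The decomposition into $A_t^i$ and $B_t^i$, the exchangeability, the conditional-variance estimate, and the $L^2$-Lyapunov argument for uniform second moments are all present in the paper.

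However, there are two real gaps. First, you say you would use "the same L\'evy processes $(Z^i_t)$ as the particles'' to drive $\tilde X^i$, which is the synchronous coupling; but synchronous coupling of the noise gives no help from the jumps, and since \eqref{cond:drif1} only yields dissipativity of $b_1$ for $|x-y|>r_{b_1}$, you would have no contraction near the diagonal. You later invoke "the coupling transferred from the proof of Theorem \ref{mainSDEresult}'', which is the refined basic coupling, but this is a \emph{different} coupling and the two descriptions are incompatible. Moreover, the concave test function from the proof of Theorem \ref{mainSDEresult} is the additive one (discontinuous near the diagonal, comparable to $(V(x)+V(y))\I_{\{x\neq y\}}$, not to $|x-y|$); the function you actually need is the concave $\psi$ built in the proof of Theorem \ref{thtpw} from the refined basic coupling together with \eqref{cond:drif1}, which satisfies $c_1 r\le\psi(r)\le(1+c_1)r$.

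Second and more importantly, you flag as your "main obstacle'' the simultaneous implementation of the coupling for all $i$, but you do not supply the idea that the paper uses to resolve it, and the resolution is non-trivial. In the McKean--Vlasov setting one cannot construct the refined basic coupling by the interlacing technique, because the solution is not a strong Markov process, and the coupling-time argument of Proposition \ref{p-w} also fails since a stopped McKean--Vlasov process does not solve the same SDE (see Remark \ref{r:thtpw}). The paper circumvents this by introducing, for each $\delta>0$, the interpolated coupling operator $\widetilde L_Z^{\delta}$ of \eqref{truncatedCoupling}, which uses the refined basic coupling only when the distance exceeds $\delta/2$ (so the extra jump kernel has bounded intensity) and is synchronous close to the diagonal; this gives a well-posed nonlinear martingale problem via \cite{JMW}, and then $\delta\to 0$ is taken at the end. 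Without this regularization step, the semimartingale decomposition of $\psi(|X_t^{i,n}-\tilde X^i_t|)$ that you invoke is not justified, and the resulting extra error terms (which vanish as $\delta\to0$) must be tracked explicitly through the Gronwall estimate.

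A minor remark: in the fluctuation estimate the paper uses the conditional variance given $\bar X_t^i$ over the $n-1$ remaining i.i.d.\ copies, which introduces a $\frac{1}{n-1}$ normalisation and a small correction term before the final $O(n^{-1/2})$ bound; this is worth handling carefully, though it does not affect the conclusion.
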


\begin{remark}\label{remarkLevyMeasureAssumption}
    Condition \eqref{assumption:concentrationLevymeasure} is an assumption about sufficient concentration of the L\'{e}vy measure $\nu$ around zero
    (i.e., sufficient small jump activity of the L\'{e}vy process $(Z_t)_{t \geq 0}$). It is, however, a weak requirement that can be satisfied even for singular measures. For example,
    if $$\nu(dz)\ge \I_{\{0<z_1\le
        1\}}\frac{c}{|z|^{d+\alpha}}\,dz$$ holds for $z = (z_1, \ldots, z_d) \in \R^d$ and some constants $c>0$ and
    $\alpha\in (0,2)$, then \eqref{assumption:concentrationLevymeasure} is satisfied (see
    \cite[Example 1.2]{Lwcw}).
\end{remark}

In order to study convergence of solutions of McKean-Vlasov SDEs to their invariant measures, we need weaker assumptions than for proving the uniform in time propagation of chaos in Theorem \ref{Main-4}. Namely, \emph{we can consider \eqref{non-1} with a drift term $b: \R^d \times \mathscr{P}_1(\R^d) \to \R^d$ of general form.}
 Here $\mathscr{P}_1(\R^d)$ denotes the family of all probability measures on $\R^d$ with finite first moment, equipped with the topology of weak convergence metrized by $W_1$ (cf. \cite[Theorem 6.9]{Vil}). \emph{We impose the following continuity and contractivity at infinity conditions on the drift.
\begin{itemize}
    \item[(2-i)]
 $b(x,\mu)$ is continuous on $\R^d \times \mathscr{P}_1(\R^d)$
  in the product topology, and
 there exist constants $K_1$, $l_0\ge0$ and $K_2$, $K_3 > 0$ such that for any $x_1$, $x_2\in \R^d$
 with $x_1\neq x_2$ and any
  $\mu_1$, $\mu_2 \in
    \mathscr{P}_1(\R^d)$,
    \begin{equation}\label{assumptionOneSidedLipschitzForMcKean}\begin{split}\frac{\langle b(x_1,\mu_1)-b(x_2,\mu_2),x_1-x_2\rangle}{|x_1-x_2|} \le &  K_1|x_1-x_2|\I_{\{|x_1-x_2|\le l_0\}} \\ &-K_2|x_1-x_2|\I_{\{|x_1-x_2|>l_0\}}
    +K_3W_{1}(\mu_1,\mu_2).\end{split}  \end{equation}
    Furthermore, there is a constant $C_1>0$ such that for all $\mu \in \mathscr{P}_1(\R^d)$,
    \begin{equation}\label{assumptionLinearGrowthForMcKean}
    |b(0,\mu)|\le C_1\left(1+\int_{\R^d} |z|\,\mu(dz)\right).
    \end{equation}
 \end{itemize}
For the L\'{e}vy measure $\nu$, as in condition \eqref{assumption:concentrationLevymeasure}, we assume
\begin{itemize}
    \item[(2-ii)] For the function $J(r)$ given by \eqref{defJ}, there exists a constant $\alpha \in (0,1)$ such that
    \begin{equation}\label{assumption:concentrationLevymeasure2}
    \lim_{r\to 0}\inf_{s\in (0,r]} J(s)s^\alpha >0;
    \end{equation}
moreover,
    \begin{equation*}
    \int_{\{ |z| > 1 \}} |z|\, \nu(dz) < \infty.
    \end{equation*}
\end{itemize}}

We have the following result on convergence of
the solution of (\ref{non-1}) to a stationary distribution.

\begin{theorem}\label{Main-1} Suppose that assumptions $(2$-{\rm i}$)$--$(2$-{\rm ii}$)$ hold. Let $\mu_t$ denote the marginal law of a strong solution $(X_t)_{t\ge0}$ of \eqref{non-1} with initial distribution $\mu_0$
having finite first moment. Then there
    exists a constant $K^*_{3}>0$ such that for any
    $K_{3}\in(0,K^*_{3}]$, the associated McKean-Vlasov equation
    \eqref{non-1} has a unique invariant probability measure $\mu$ with
    finite first moment such that
    $$W_1(\mu_t,\mu)\le Ce^{-\lambda t},
    \quad t>0$$ for some positive constants $\lambda$ and $C$ independent of $t$.  \end{theorem}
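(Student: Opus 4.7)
The plan is to reduce Theorem \ref{Main-1} to a contraction estimate for the nonlinear semigroup $\Phi_t:\mu_0\mapsto\mu_t$ on $(\mathscr{P}_1(\R^d),W_1)$, obtained by combining the coupling/Wasserstein technique behind Theorem \ref{mainSDEresult} with a perturbation argument that treats the distribution-dependent part $K_3 W_1(\mu_1,\mu_2)$ in \eqref{assumptionOneSidedLipschitzForMcKean} as a small additive error. First I would establish existence of a strong solution of \eqref{non-1} with $\sup_{t\le T}\int|x|\,\mu_t(dx)<\infty$ for every $T<\infty$; under \eqref{assumptionOneSidedLipschitzForMcKean}, \eqref{assumptionLinearGrowthForMcKean} and the first moment bound on $\nu$ in (2-ii), this follows from a standard Picard/Banach fixed-point argument on $C([0,T];\mathscr{P}_1(\R^d))$, in the spirit of Proposition \ref{P:exist--}.

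Having secured well-posedness, I would couple two solutions $(X_t)_{t\ge 0}$ and $(Y_t)_{t\ge 0}$ with initial laws $\mu_0,\nu_0\in\mathscr{P}_1(\R^d)$ using the coupling of L\'evy noises built in Section \ref{suce1} (which is available thanks to \eqref{assumption:concentrationLevymeasure2}). For the same bounded, non-decreasing concave distance function $f:[0,\infty)\to[0,\infty)$ used in the proof of Theorem \ref{mainSDEresult}, tuned to $K_1$, $K_2$, $l_0$ and the lower bound of $J$, and satisfying $c_l r\le f(r)\le c_u r$ on the relevant range, the generator/It\^o computation underlying Theorem \ref{mainSDEresult} should yield
\begin{equation*}
\frac{d}{dt}\Ee[f(|X_t-Y_t|)] \le -\lambda_0\,\Ee[f(|X_t-Y_t|)] + c_0 K_3\, W_1(\mu_t,\nu_t),
\end{equation*}
where the first term is the frozen-measure contraction obtained exactly as in the SDE case and the second is the new linear perturbation produced by the $K_3$-term in \eqref{assumptionOneSidedLipschitzForMcKean}.

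Using $W_1(\mu_t,\nu_t)\le \Ee|X_t-Y_t|\le c_l^{-1}\Ee[f(|X_t-Y_t|)]$ and a Gronwall argument applied to $t\mapsto e^{\lambda_0 t}\Ee[f(|X_t-Y_t|)]$, I would then get
\begin{equation*}
W_1(\mu_t,\nu_t) \le C_* e^{-(\lambda_0 - c_0 K_3/c_l)\, t}\,W_1(\mu_0,\nu_0).
\end{equation*}
Setting $K_3^*:=c_l\lambda_0/(2c_0)$ and $\lambda:=\lambda_0-c_0K_3/c_l>0$ for $K_3\in(0,K_3^*]$, this exhibits $\Phi_t$ as a strict $W_1$-contraction on $\mathscr{P}_1(\R^d)$ for every $t>0$. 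Completeness of $(\mathscr{P}_1(\R^d),W_1)$ together with Banach's fixed-point theorem (applied to $\Phi_{t_0}$ for some fixed $t_0>0$, with the semigroup property promoting the fixed point to a genuine invariant measure) produces a unique $\mu\in\mathscr{P}_1(\R^d)$ with $\Phi_t\mu=\mu$ for all $t\ge 0$; specializing the contraction to $\nu_0=\mu$ gives the claimed bound $W_1(\mu_t,\mu)\le C e^{-\lambda t}$.

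The main obstacle is the first displayed inequality. Setting up the coupling and the concave distance $f$ so that the distributional perturbation in \eqref{assumptionOneSidedLipschitzForMcKean} enters only additively, at rate proportional to $K_3 W_1(\mu_t,\nu_t)$, and without contaminating the rate $\lambda_0$ extracted from the pure L\'evy coupling, is the delicate step, since the $K_3$-term is not controlled pathwise by $|X_t-Y_t|$. Once this separation is achieved, the moment bounds, the Gronwall closure and the fixed-point argument are standard.
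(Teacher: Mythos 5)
Your high-level strategy — contraction estimate for the nonlinear semigroup, Gronwall treating the $K_3 W_1$-term as an additive perturbation, Banach fixed point on $(\mathscr{P}_1(\R^d),W_1)$ — is exactly the paper's route (the paper reduces Theorem~\ref{Main-1} to Theorem~\ref{thtpw} and then to the fixed-point argument in \cite[Theorem~3.1(2)]{Wang}). But there is a genuine gap in your coupling step, and you have misidentified where the real difficulty lies.

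You write that ``the generator/It\^{o} computation underlying Theorem~\ref{mainSDEresult} should yield'' the differential inequality. That computation, however, runs through Proposition~\ref{p-w}, whose Dynkin-formula argument stops the coupled process at exit times $T_n$ and then uses that $X_t=Y_t$ for $t\ge T$. For a McKean-Vlasov equation this is broken for two reasons pointed out in Remark~\ref{r:thtpw}: (a) a stopped solution no longer solves \eqref{non-1}, since the drift $b(\cdot,\mu_t)$ is evaluated at the marginal law of the \emph{unstopped} process, so Dynkin's formula with the coupling generator does not apply to $(X_{t\wedge T_n},Y_{t\wedge T_n})$; and (b) even constructing the coupled process via the usual interlacing technique fails because $(X_t)_{t\ge 0}$ is not strong Markov. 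The paper handles (a) by replacing the refined basic coupling near the diagonal with the synchronous coupling, i.e.\ using $\widetilde{L}_Z^\delta$ from \eqref{truncatedCoupling}, so that the marginals never merge and no stopping time is needed; one then lets $\delta\to 0$ at the end of the proof. It handles (b) by constructing the coupled process as a solution of a nonlinear martingale problem via \cite[Propositions~1.7 and~1.10]{JMW} rather than pathwise. Neither of these is addressed in your proposal. Moreover, the issue you flag as ``the delicate step'' — that the $K_3$-term is not controlled pathwise by $|X_t-Y_t|$ — is actually resolved by the routine bound $W_1(\mu_t,\nu_t)\le \Ee|X_t-Y_t|\le c_l^{-1}\Ee[f(|X_t-Y_t|)]$ followed by Gronwall, which is exactly what the paper does in step~(iii) of the proof of Theorem~\ref{thtpw}; the substantive obstacles are the two coupling issues above.
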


Note that here, in contrast to condition (1-iii) required for Theorem \ref{Main-4}, we only assume that the L\'{e}vy measure $\nu$ has a finite first (and not necessarily second) moment. Taking into account Remark \ref{remarkLevyMeasureAssumption}, we see that our result covers McKean-Vlasov SDEs driven e.g.\ by nonsymmetric $\alpha$-stable processes with $\alpha \in (1,2)$, but also a much larger class of L\'{e}vy processes.
Note also that existence of an invariant measure in Theorem \ref{Main-1}
 essentially requires the constant $K_3$ in \eqref{assumptionOneSidedLipschitzForMcKean} to be sufficiently small,
and that condition \eqref{assumptionLinearGrowthForMcKean} is
only
used to ensure existence of a non-explosive solution to \eqref{non-1}.

Recently, Y. Song in \cite{Song} applied
Malliavin calculus to obtain exponential ergodicity for McKean-Vlasov equations with L\'{e}vy jumps
in the total variation distance, under an assumption similar to \eqref{assumptionOneSidedLipschitzForMcKean} but with $l_0 = 0$ and some additional regularity assumptions on the L\'{e}vy measure, see
 \cite[Theorem 1.5]{Song}.
 For the proof of Theorem \ref{Main-1} we use different methods, and we are able to obtain exponential ergodicity in the $L^1$-Wasserstein distance under relaxed assumptions, with an arbitrary $l_0 \geq 0$.

The assumption (2-i) on the drift $b(x,\mu)$ in Theorem \ref{Main-1}
can be further weakened, if we assume that $b(x,\mu)$ is of the special form \eqref{e:dri}.
Namely, \emph{we assume
\begin{itemize}
	\item[(3-i)]
	$b(x,\mu)$ satisfies \eqref{e:dri} and there exists a constant $\lambda > 0$ such that
	\begin{equation}\label{drift1-0000}\langle b_1(x) , x \rangle \leq - \lambda |x|^2\end{equation}
	for all $x \in \R^d$ with large enough $|x|$. Moreover, there are constants $K_1$, $K_2$ and $K_3>0$ such that for all $x_1,x_2\in \R^d$
with $x_1\neq x_2$ and $\mu_1,\mu_2\in \mathscr{P}_1(\R^d)$,
	\begin{equation}\label{e:3i2}
	\frac{\langle b_1(x_1)-b_1(x_2),x_1-x_2\rangle}{|x_1-x_2|} \le   K_1 |x_1-x_2|$$ and  $$\frac{\langle b_2(x_1,\mu_1)-b_2(x_2,\mu_2),x_1-x_2\rangle}{|x_1-x_2|} \le   K_2 |x_1-x_2| +K_3W_{1}(\mu_1,\mu_2).
	\end{equation}
	There is also a constant $B_0>0$ such that for all $x\in \R^d$ and $\mu\in\mathscr{P}_1(\R^d)$,
	\begin{equation}\label{e:3i3}
	|b_2(x,\mu)|\le B_0\left(1+\int |z|\,\mu(dz)+|x|\right).
	\end{equation}
\end{itemize}	
Furthermore, condition \eqref{assumption:concentrationLevymeasure2} on the concentration of the L\'{e}vy measure $\nu$ around zero can be only required to hold for a component of $\nu$.
\begin{itemize}
	\item[(3-ii)] We have $\int_{\{ |z| > 1 \}} |z| \,\nu(dz) < \infty$ and for any $\theta>0$, there exists a measure $0<\nu_\theta\le \nu$ such that ${\rm {supp}}\,\nu_\theta \subset B(0,1)$, $\int_{\{|z|\le 1\}} |z|\,\nu_\theta(dz)\le\theta,$ and
	\begin{equation}\label{ppllss}
	\lim_{r\to 0}\inf_{s\in (0,r]} J_{\nu_\theta}(s)s^\alpha >0,
	\end{equation} where $\alpha:=\alpha(\theta)\in (0,1)$ and $$J_{\nu_\theta}(s):= \inf_{x\in \R^d: |x|\le s} \big[\nu_\theta\wedge (\delta_x\ast \nu_\theta)\big]( \R^d)>0.$$
\end{itemize}}

Then we have the following result.

\begin{theorem}\label{Main-1add}
	Suppose that assumptions $(3$-{\rm i}$)$--$(3$-{\rm ii}$)$ hold. Let $\mu_t$ denote the marginal law of a strong solution $(X_t)_{t\ge0}$ of \eqref{non-1} with initial distribution $\mu_0$
having finite first moment.
Then there exist
positive constants $K_2^*, K_3^*, B_0^*$ such that for any $K_2\in (0,K_2^*]$, $K_3\in (0, K_3^*]$ and $B_0\in (0, B_0^*]$,
 the associated McKean-Vlasov equation \eqref{non-1} has a unique invariant probability measure $\mu$ with finite first moment such that
	$$W_1(\mu_t,\mu)\le Ce^{-\lambda t},
\quad t>0$$ for some positive constants $\lambda$ and $C$ independent of $t$.
\end{theorem}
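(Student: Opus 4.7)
The plan is to combine the Lyapunov-plus-coupling strategy behind Theorem \ref{mainSDEresult} with the fixed-point scheme on flows used for Theorem \ref{Main-1}. The novelty compared with Theorem \ref{Main-1} is that the global contractivity at infinity \eqref{assumptionOneSidedLipschitzForMcKean} is replaced by the Lyapunov condition \eqref{drift1-0000} on the non-interacting part $b_1$ alone, so $W_1$-contraction cannot come from a purely concave-$f$ semimetric on $|x-y|$; instead I would use an EGZ-type semimetric multiplied by a Lyapunov factor, exactly as in the proof of Theorem \ref{mainSDEresult}.

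First, freeze a flow $(\nu_t)_{t\ge 0}\subset \mathscr{P}_1(\R^d)$ and consider the time-inhomogeneous frozen equation
$$dY_t = \Big(b_1(Y_t) + \int b_2(Y_t,z)\,\nu_t(dz)\Big)\,dt + dZ_t.$$
The one-sided Lipschitz bound \eqref{e:3i2} combined with the arguments of Proposition \ref{P:exist--} yields a unique strong solution. Using \eqref{drift1-0000}, \eqref{e:3i3} and $\int_{\{|z|>1\}}|z|\,\nu(dz)<\infty$, and choosing $B_0$ small, one obtains a drift inequality of the form $\mathcal{L}_tV(x)\le -c_1V(x)+c_2\bigl(1+\int|z|\,\nu_t(dz)\bigr)$ for a Lyapunov function $V(x)=1+|x|$ (or a smoothed version), and hence uniform-in-time first-moment bounds on $Y_t$.

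Second, for two frozen-flow solutions $Y_t^1,Y_t^2$ driven by flows $\nu_t^1,\nu_t^2$, decompose $Z_t=Z_t^{\nu_\theta}+Z_t^{\nu-\nu_\theta}$ into independent L\'evy processes with L\'evy measures $\nu_\theta$ and $\nu-\nu_\theta$, feed the $(\nu-\nu_\theta)$-part synchronously into both equations, and use the small-jump part governed by $\nu_\theta$ to run the coupling of \cite{Lwcw}. Condition \eqref{ppllss} is precisely what makes this coupling effective, while $\int_{\{|z|\le 1\}}|z|\,\nu_\theta(dz)\le\theta$ keeps the drift perturbation caused by splitting off the $\nu_\theta$ component arbitrarily small. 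Applying It\^o's formula to the EGZ-type semimetric
$$\rho(x,y)=f(|x-y|)\bigl(1+\varepsilon V(x)+\varepsilon V(y)\bigr),$$
with $f$ bounded, concave, increasing (chosen as in Theorem \ref{mainSDEresult}) and $\varepsilon>0$ small, the jump coupling produces short-range contraction in $f(|x-y|)$ while the Lyapunov factor and \eqref{drift1-0000} yield dissipation at large $|x-y|$; the cross-terms from \eqref{e:3i2} are absorbed provided $K_2$ is small enough, and the dependence on the flow enters only through the estimate $|\int b_2(x,z)(\nu_s^1-\nu_s^2)(dz)|\le K_3 W_1(\nu_s^1,\nu_s^2)$ forced by \eqref{e:3i2}. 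This yields
$$\Ee\rho(Y_t^1,Y_t^2)\le e^{-\lambda_0 t}\Ee\rho(Y_0^1,Y_0^2)+CK_3\int_0^t e^{-\lambda_0(t-s)}W_1(\nu_s^1,\nu_s^2)\,ds.$$

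Finally, a Banach fixed-point argument on the space of flows endowed with an exponentially weighted $W_1$-metric (as in the proof of Theorem \ref{Main-1}) gives a unique invariant measure $\mu\in\mathscr{P}_1(\R^d)$ once $K_3$ is small enough to make the fixed-point map a contraction, and applying the estimate above with $\nu_t^1=\mu_t$, $\nu_t^2\equiv\mu$ yields $W_1(\mu_t,\mu)\le Ce^{-\lambda t}$. The main obstacle is the joint calibration of the three smallness constants: $B_0^*$ must be small enough for the Lyapunov drift to survive the addition of the interaction term to $b_1$; $K_2^*$ small enough that the one-sided Lipschitz behaviour of $b_2$ in its spatial argument does not destroy the Lyapunov dissipation used in the $\rho$-bound; and $K_3^*$ small enough to close the Banach fixed-point step. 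A related delicate point is tuning $f$ and $\varepsilon$ so that the single semimetric $\rho$ remains equivalent to $|x-y|$ up to a Lyapunov weight while simultaneously matching the jump contraction at small scales and the Lyapunov dissipation at large distances; the required estimates are essentially those already developed in the proof of Theorem \ref{mainSDEresult}.
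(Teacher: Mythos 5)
Your overall strategy---a Lyapunov-weighted EGZ semimetric combined with a partial refined basic coupling to get short-range contraction, plus a fixed-point argument in an exponentially weighted $W_1$-metric to absorb the measure dependence---is a plausible alternative to the paper's route. The paper, in Theorem \ref{theoremMcKeanVlasovMultiplicative}, does not freeze the flow: it couples two genuine McKean-Vlasov solutions $(X_t)_{t\ge0}$ and $(Y^\delta_t)_{t\ge0}$ directly via the system \eqref{e:coupp} and absorbs the $K_3 W_1(\mu_{X_t},\mu_{Y^\delta_t})$ cross-term inside a single Gronwall loop, so there is no fixed-point layer at all. (Likewise, the estimate behind Theorem \ref{Main-1}, namely Theorem \ref{thtpw}, is obtained by a direct coupling of two nonlinear solutions, not by a fixed-point scheme on flows as your opening paragraph suggests.) Either way, one must couple a pair of SDEs whose drifts differ---two McKean-Vlasov solutions with different marginal flows in the paper, or two frozen-flow equations with $\nu^1_t\neq\nu^2_t$ in your version---and this is where the proposal has a genuine gap.

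When the drifts differ, the two marginals separate again immediately after meeting, and near the diagonal the glue-together intensity of the refined basic coupling, which is comparable to $J(|x-y|)\sim|x-y|^{-\alpha}$ under \eqref{ppllss}, is unbounded. Consequently the phrase ``applying It\^o's formula to $\rho(Y^1_t,Y^2_t)$'' is not a priori justified: it is not clear that the coupled jump process is even well defined near the diagonal, nor that the compensated jump integral $M^\psi$ is a genuine martingale. The paper's fix is the $\delta$-modified operator $\widetilde L^\delta_Z$ of \eqref{truncatedCoupling}, which interpolates smoothly between the refined basic coupling for $|x-y|\ge\delta$ and the purely synchronous coupling for $|x-y|\le\delta/2$; this caps the jump rate uniformly, yields the martingale property, furnishes well-posedness through the nonlinear martingale problem of \cite{JMW} (see step (ii) of the proof of Theorem \ref{thtpw}), and introduces a controllable error $C(\delta)\to 0$ that is removed at the end. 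You would need this regularisation (or an equivalent one) in your frozen-flow coupling as well; without it the central estimate in your third paragraph is not established. Once that is in place, your calibration of $B_0^*$, $K_2^*$, $K_3^*$ via Lyapunov bounds of the type in Lemma \ref{lem:var2}, and the concluding appeal to a result such as \cite[Theorem 3.1(2)]{Wang} to extract the invariant measure, match the final steps of the paper's proof of Theorem \ref{Main-1add}.
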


At the end of this section, we give some
specific examples of drift terms that satisfy the corresponding assumptions.
 \begin{itemize}

\item[(1)] Each $b_1(x)$ below satisfies \eqref{cond:drif1} and \eqref{drift1-0000}.

\begin{itemize}
\item[(i)] Let $b_1(x)=-\lambda x+ U(x)$, where $U: \R^d\to \R^d$ is bounded.

\item[(ii)] Let $b_1(x)=-\nabla V(x)+U(x),$ where $V(x)=|x|^{2\beta}$ with $\beta>1$ and $U: \R^d\to \R^d$ is Lipschitz continuous, i.e., there is a constant $c>0$ such that for all $x,y\in \R^d$,
$|U(x)-U(y)|\le c|x-y|$.
\end{itemize}

\smallskip

It is obvious that both $b_1$ above satisfy \eqref{drift1-0000}. For $b_1$ given in (i), we have
$$\langle b_1(x)-b_1(y),x-y\rangle\le -\lambda |x-y|^2+\|U\|_\infty |x-y|,\quad x,y\in \R^d,$$ which implies that it fulfills \eqref{cond:drif1}. For $b_0(x)=-\nabla V(x)$ with $V(x)=|x|^{2\beta}$ for some $\beta>1$, it has been proven in \cite[Section 6, Example 1]{CG} that for any $x,y\in \R^d$,
$$\langle b_0(x)-b_0(y),x-y\rangle\le -2\beta 2^{3(1-\beta)}|x-y|^{2\beta}.$$ With this at hand, we can  see that $b_1$ given in (ii) also fulfills \eqref{cond:drif1}.

\smallskip

\smallskip

\item[(2)] For any $x\in \R^d$ and $\mu \in
    \mathscr{P}_1(\R^d)$,
        define $b_2(x,\mu)=\int_{\R^d} \bar{b}_2(x,z)\,\mu(dz)$, where $\bar{b}_2(x,z)$ satisfies that
$$
    |\bar{b}_2(x,z) - \bar{b}_2(y,z')| \leq K_{b_2}(|x-y|+|z-z'|)
$$
    holds for some constant $K_{b_2} > 0$ and for all $x$, $y$, $z$ and $z' \in \R^d$.
  Then we can verify that \eqref{e:3i2}
  and \eqref{e:3i3} also hold true; see Remark \ref{remarkDrift} for more details.
  \end{itemize}
Combining $b_1(x)$ and $b_2(x,\mu)$ above, one can obtain
multiple
examples of $b(x,\mu)$ which satisfy the corresponding assumptions in Theorems \ref{Main-4}, \ref{Main-1} and \ref{Main-1add}.

\ \

The remaining part of the paper is organized as follows. In Section \ref{suce1} we first
present a general
framework for studying
L\'{e}vy-driven SDEs with distribution independent drifts via the coupling operator. Then we
apply techniques based on couplings and Lyapunov functions to prove results on convergence to stationary distributions for such equations, including Theorem \ref{mainSDEresult}.
 In particular, in
  Subsection \ref{coupling-op}, we also introduce a new coupling for SDEs with distribution independent drifts by combining the refined basic coupling with the synchronous coupling, which is crucial
 in the
 study of the exponential ergodicity for the SDE \eqref{non-1} with a distribution-dependent drift.
  In Section \ref{section3} we first prove a slightly more general version of Theorem \ref{Main-1} and then we discuss its further extensions using the Lyapunov function -- based on methods of Section \ref{suce1}, thus proving Theorem \ref{Main-1add}. Finally, in Section \ref{section4} we prove Theorem \ref{Main-4}.

\section{Exponential convergence for SDEs with L\'evy noise}\label{suce1}
In this section, we will study the SDE \eqref{non-1} with distribution-independent drift, i.e.,
  \begin{equation}\label{s1}
  d X_t=b(X_{t})\,d t+ d Z_t,\quad X_0=x\in \R^d,
  \end{equation}
where $b: \R^d\rightarrow\R^d$ is a measurable function, and $Z=(Z_t)_{t\ge0}$ is a pure jump L\'{e}vy process on $\R^d$.
Denote by $\nu$ the L\'evy measure of the process $Z$. We always assume that
{\it the SDE \eqref{s1} has a unique strong solution}, which
is true, for example, for any noise $Z$ if the drift $b(x)$ is continuous and satisfies a one-sided Lipschitz condition, see \cite[Theorem 2]{GK}, or if $b(x)$ is H\"{o}lder continuous and $Z$ is a L\'evy process whose L\'evy measure satisfies some integrability conditions at zero and at infinity and whose transition semigroup enjoys certain regularity properties, see e.g.\ \cite{CSZ, kuehn-rs, Po1, Zhang1}.

\subsection{Coupling approach to exponential convergence: general framework} \label{coupling-op}
In this part, we present a general approach, based on the probabilistic coupling method, to study the exponential convergence of the SDE given by \eqref{s1}.
Let $X:=(X_t)_{t\geq 0}$ be the (unique) strong solution to the SDE \eqref{s1}. Then, the infinitesimal generator of $X$
acting on $C_b^2(\R^d)$
is given by
  \begin{equation}\label{SDE-generator}
  Lf(x)= \int\!\!\big(f(x+z)-f(x)-\langle\nabla f(x), z\rangle\I_{\{|z|\leq 1\}} \big)\,\nu(dz)+\langle b(x), \nabla f(x)\rangle.
  \end{equation}
To consider couplings of $X$, we will make use of the coupling operator of the generator $L$. A linear operator $\widetilde L: C_b^2(\R^{2d})\to B_b(\R^d)$ is called a coupling operator of the SDE given by \eqref{s1} (or of the generator $L$ given by \eqref{SDE-generator}), if
$$\widetilde L (f\otimes 1)(x,y)=Lf(x),\,\,\,\, \widetilde L(1\otimes g)(x,y)=Lg(y),\quad f,g\in C_b^2(\R^d),$$
where $(f\otimes g)(x,y)=f(x)g(x)$ denotes the tensor product of two functions $f$ and $g$. If the operator $\widetilde L$ can generate a Markov process $(X_t,Y_t)_{t\ge0}$ on $\R^{2d}$, then we call $(X_t,Y_t)_{t\ge0}$ a Markovian coupling process of the process $X$ determined by \eqref{s1}. In this case, $\widetilde L$ is called a Markovian coupling operator of $X$ (or $L$). In particular, let $(\widetilde P_t)_{t\ge0}$ be the semigroup of the Markovian coupling process $(X_t,Y_t)_{t\ge0}$, and $(P_t)_{t\ge0}$ be the semigroup of the process $X$. Then, for any $f,g\in C_b^2(\R^d)$ and $t>0$,
$$\widetilde P_t(f\otimes 1)(x,y)=P_tf(x),\,\,\,\, \widetilde P_t(1\otimes g)(x,y)=P_tg(y).$$

Let $\Phi$ be a function on $\R^d\times \R^d$
such that $\Phi(0,0)=0$ and $\Phi$ is strictly positive elsewhere. Given two probability measures $\mu_1$ and
$\mu_2$ on $\R^d$, we define the following quantity (which can be called
a Wasserstein-type distance or a Kantorovich distance)
$$
  W_\Phi(\mu_1,\mu_2)=\inf_{\Pi\in \mathscr{C}(\mu_1,\mu_2)}\int_{\R^d\times\R^d} \Phi(x,y)\,d\Pi(x,y),
$$
where $\mathscr{C}(\mu_1,\mu_2)$ is the collection of all measures on
$\R^d\times\R^d$ having $\mu_1$ and $\mu_2$ as marginals. In
particular, when $\Phi(x,y)=|x-y|$, $W_\Phi$ is just the
standard $L^1$-Wasserstein distance, which is simply denoted by
$W_1$ in the following; on the other hand, when $\Phi(x,y)=\I_{\{x\neq y\}}$, $W_\Phi$
leads to the total variation distance
$W_\Phi(\mu_1,\mu_2)=\frac{1}{2}\|\mu_1-\mu_2\|_{\var}.$

The
following statement provides a general tool for showing exponential
convergence in Wasserstein-type distances via the coupling method.

\begin{proposition}\label{p-w}
 Suppose that $\widetilde{L}$ is a coupling operator for the SDE given by \eqref{s1}, generating a non-explosive
 Markovian coupling process $(X_t,Y_t)_{t\ge0}$
 such
 that $X_t=Y_t$ for all $t\ge T$, where
 $T:=\inf\{t\ge0: X_t=Y_t\}$ is the coupling time of the process $(X_t,Y_t)_{t\ge0}$. Assume that there exist a constant $\lambda>0$ and a sequence of
 non-negative functions
 $\{\Phi_n(x,y)\}_{n\ge1}$ on $\R^{2d}$
such that
for any $n\ge1$,  $\Phi_n(x,x)=0$ for all $x\in \R^d$,
$\Phi_n(x,y)>0$ for all $x\neq y\in \R^d$, and
$\widetilde{L}\Phi_n(x,y)$ is
pointwise well defined, and
such that for  $n\ge 1$ large enough and for all $x,y\in \R^d$ with $1/n\le |x-y|\le n$,
\begin{equation}\label{e:proposition}
  \widetilde{L}\Phi_n
  (x,y)\le - \lambda \Phi_n(x,y).
\end{equation}
Then for any $t>0$ and $x,y\in\R^d$,
$$
  W_{\Phi_\infty}(
  P_t(x,\cdot), P_t(y,\cdot))\leq
  \Phi_\infty (x,y)e^{-\lambda t},
 $$
where  $\Phi_\infty=\liminf_{n\to\infty}\Phi_n$
and $P_t(x,\cdot)$ is the transition probability of the process $(X_t)_{t \geq 0}$ solving the SDE $\eqref{s1}$.
\end{proposition}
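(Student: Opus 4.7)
The plan is to apply Dynkin's formula to the process $e^{\lambda t}\Phi_n(X_t,Y_t)$, localized to the region where the assumed differential inequality $\widetilde L\Phi_n\le-\lambda\Phi_n$ holds, and then pass to the limit $n\to\infty$ via Fatou's lemma. Fix $x,y\in\R^d$ and choose $n$ large enough that $1/n\le|x-y|\le n$. Introduce the stopping time
$$\tau_n:=\inf\{t\ge 0:|X_t-Y_t|\notin[1/n,n]\},$$
which confines the coupled process to the region where $\widetilde L\Phi_n(X_s,Y_s)+\lambda\Phi_n(X_s,Y_s)\le 0$. Since $\widetilde L\Phi_n$ is pointwise well defined, Dynkin's formula then shows that $e^{\lambda(t\wedge\tau_n)}\Phi_n(X_{t\wedge\tau_n},Y_{t\wedge\tau_n})$ is a non-negative supermartingale under $\Pp^{(x,y)}$, so that
$$\Ee^{(x,y)}\bigl[e^{\lambda t}\Phi_n(X_t,Y_t)\I_{\{\tau_n>t\}}\bigr]\le \Ee^{(x,y)}\bigl[e^{\lambda(t\wedge\tau_n)}\Phi_n(X_{t\wedge\tau_n},Y_{t\wedge\tau_n})\bigr]\le \Phi_n(x,y).$$

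Next, I will let $n\to\infty$. By non-explosion of the coupling process, the upper exit time $\inf\{t:|X_t-Y_t|>n\}$ tends to infinity almost surely. On the event $\{T>t\}$, the c\`adl\`ag trajectory $s\mapsto|X_s-Y_s|$ remains strictly positive on $[0,t]$, and combined with $X_s=Y_s$ for all $s\ge T$ this implies $\inf_{s\le t}|X_s-Y_s|>0$ almost surely, so that the lower exit time exceeds $t$ for all $n$ sufficiently large. Hence $\I_{\{\tau_n>t\}}\to\I_{\{T>t\}}$ a.s.\ as $n\to\infty$. Applying Fatou's lemma to the non-negative sequence and using $\Phi_\infty=\liminf_n\Phi_n$ yields
$$\Ee^{(x,y)}\bigl[e^{\lambda t}\Phi_\infty(X_t,Y_t)\I_{\{T>t\}}\bigr]\le \Phi_\infty(x,y).$$
On the complementary event $\{T\le t\}$ we have $X_t=Y_t$, so $\Phi_n(X_t,Y_t)=0$ for every $n$ and $\Phi_\infty(X_t,Y_t)=0$. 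Thus the indicator may be dropped, giving $\Ee^{(x,y)}[\Phi_\infty(X_t,Y_t)]\le \Phi_\infty(x,y)\,e^{-\lambda t}$. Since $(X_t,Y_t)$ started from $(x,y)$ is a Markovian coupling whose marginals are $P_t(x,\cdot)$ and $P_t(y,\cdot)$, the definition of $W_{\Phi_\infty}$ immediately delivers the desired estimate
$$W_{\Phi_\infty}(P_t(x,\cdot),P_t(y,\cdot))\le \Ee^{(x,y)}[\Phi_\infty(X_t,Y_t)]\le \Phi_\infty(x,y)\,e^{-\lambda t}.$$

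The main obstacle I expect is making Dynkin's formula rigorous in this low-regularity setting: since $\Phi_n$ need not be globally bounded or globally of class $C^2$, and $\widetilde L$ contains a nonlocal jump integral, one has to verify that the local martingale part in the Dynkin decomposition of $e^{\lambda t}\Phi_n(X_t,Y_t)$ is a genuine martingale up to time $t\wedge\tau_n$. The standard way out is to combine the localization by $\tau_n$, which keeps $|X-Y|$ inside a compact set bounded away from $0$, with integrability of the L\'evy measure $\nu$ away from the origin in order to dominate the jump integrals. A secondary subtlety is the behaviour of the c\`adl\`ag distance $|X_s-Y_s|$ near the coupling time $T$, which has to be treated carefully to ensure the pointwise convergence $\I_{\{\tau_n>t\}}\to\I_{\{T>t\}}$ used in the Fatou step.
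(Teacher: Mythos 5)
Your proof is correct and reaches the conclusion by the same underlying strategy as the paper: localization via the exit time $\tau_n$ from the annulus $\{1/n\le|x-y|\le n\}$, a Dynkin-type supermartingale bound, and Fatou's lemma combined with the non-explosion fact $\tau_n\uparrow T$. The one genuine structural difference is in how the Fatou step is organized. The paper pairs the stopping time $T_n$ with a \emph{separate} function index $\Phi_m$, $m\ge n$, keeps the full stopped quantity $e^{\lambda(t\wedge T_n)}\Phi_m(X_{t\wedge T_n},Y_{t\wedge T_n})$ (including the contribution from the boundary exit), and then performs a double Fatou passage $m\to\infty$, $n\to\infty$, arriving at $\widetilde{\Ee}^{(x,y)}\big[e^{\lambda(t\wedge T)}\Phi_\infty(X_{t\wedge T},Y_{t\wedge T})\big]\le\Phi_\infty(x,y)$ before discarding the stopping time using $\Phi_\infty(X_T,Y_T)=0$. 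You instead match the index of $\Phi_n$ with $\tau_n$ and throw away the boundary-exit contribution up front by restricting to the event $\{\tau_n>t\}$, reducing everything to a single Fatou limit for the sequence $\Phi_n(X_t,Y_t)\I_{\{\tau_n>t\}}$ evaluated at the fixed endpoint $(X_t,Y_t)$. This avoids having to track the value of $\Phi$ at the random exit time and is slightly cleaner, at the cost of producing a marginally weaker intermediate statement (you do not record the bound on the stopped expectation). Both routes rest on the same key fact that on $\{T>t\}$ the distance $|X_s-Y_s|$ stays bounded away from $0$ and $\infty$ on $[0,t]$ (equivalently $\tau_n\uparrow T$), which in turn uses non-explosion and the property that the coupling cannot ``touch'' the diagonal from the left without immediately coalescing; and both treat the validity of Dynkin's formula at the same informal level, so the two arguments carry equal rigor.
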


\begin{proof}
The proof follows step 2 of the proof of \cite[Theorem 3.1]{Lwcw} with some modifications. For the
sake of completeness, we present all the details here. Let
$(X_t,Y_t)_{t\ge0}$ be the
Markovian coupling process corresponding to the coupling operator $\widetilde L$
in the statement. To prove the desired assertion, it is enough to verify that for
$x,y\in\R^d$ with $|x-y|>0$ and any $t>0$,
  $$ \widetilde{\Ee}^{(x,y)}\Phi_\infty(X_t,Y_t)\leq \Phi_\infty(x,y)e^{-\lambda t},$$
where $\widetilde{\Ee}^{(x,y)}$ is the expectation of $(X_t,Y_t)_{t\ge0}$ starting from $(x,y)$.

For $n\geq 1$ define the stopping time
  $$T_n=\inf\{t>0: |X_t-Y_t|\notin [1/n, n]\}.$$
Since the coupling process $(X_t,Y_t)_{t\ge0}$ is non-explosive, we have $T_n\uparrow T$ a.s.\ as $n\to\infty$.
For any $x,$ $y\in\R^d$ with $|x-y|>0$, we take $n\ge 1$ large enough such that $1/n<|x-y|<n$.  Let $\{\Phi_n\}_{n\ge1}$ be the sequence of
non-negative functions, and $\lambda$ be the constant given in the statement. Then,  according to \eqref{e:proposition}, for any $m\ge n$,
  $$\aligned
  &\widetilde{\Ee}^{(x,y)}\big[e^{\lambda(t\wedge T_{n})} \Phi_m(X_{t\wedge T_{n}}, Y_{t\wedge T_{n}})\big]\\
  &=\Phi_m(x,y)+\widetilde{\Ee}^{(x,y)}\bigg(\int_0^{t\wedge T_{n}} e^{\lambda s}\big[\lambda\Phi_m(X_{s},Y_{s})+\widetilde{L} \Phi_m(X_{s},Y_{s})\big]\,d s\bigg)\\
  &\le \Phi_m(x,y).\endaligned$$
 Thus by Fatou's lemma, first letting $m\to \infty$ and then $n\to\infty$ in the above inequality, we find that
  $$ \widetilde{\Ee}^{(x,y)}\big(e^{\lambda(t\wedge T)}\Phi_\infty(X_{t\wedge T}, Y_{t\wedge T})\big)\leq \Phi_\infty(x,y). $$
Thanks to the facts that $Y_t=X_t$ for $t\geq T$ and $\Phi_\infty(x,x)=\liminf_{n\to\infty}\Phi_n(x,x)=0$ for all $x\in \R^d$, we have $\Phi_\infty(X_t,Y_t)=0$ for all $t\geq T$, which implies
  $$ \widetilde{\Ee}^{(x,y)}\big(e^{\lambda(t\wedge T)}\Phi_\infty(X_{t\wedge T},Y_{t\wedge T})\big)=e^{\lambda t} \widetilde{\Ee}^{(x,y)} \big(\Phi_\infty(X_t,Y_t)\I_{\{T>t\}}\big) = e^{\lambda t}\widetilde{\Ee}^{(x,y)} \Phi_\infty(X_t, Y_t).$$
This completes the proof.
\end{proof}

To apply Proposition \ref{p-w}, we need to construct a coupling operator $\widetilde L$ and a sequence of non-negative functions $\{\Phi_n(x,y)\}_{n\ge1}$ on $\R^{2d}$ which are strictly positive outside the diagonal such that \eqref{e:proposition} is satisfied. For the exponential convergence in terms of a total variation-type distance, we will take
$\Phi_n(x,y)\in C^2(\R^{2d})$ such that $\Phi_n(x,x)=0$ and
\begin{equation}\label{eq:Phinadd}
\Phi_n(x,y)=\psi_n(|x-y|)+\varepsilon(V(x)+V(y)),\quad |x-y|\ge 1/n,
\end{equation}
where $\varepsilon>0$, $V\in C^2(\R^d)$ is a Lyapunov function and $\{\psi_n(|x-y|)\}_{n\ge1}$ is a sequence of functions approximating $a+\psi(|x-y|)$ for some constant $a>0$ and a bounded concave function $\psi$ on $[0,\infty)$ with $\psi(0)=0$. It is obvious that $\lim_{n\to\infty}\Phi_n(x,y)$ is comparable to 
$(V(x)+V(y))\I_{\{x\neq y\}}$
 and so, with this choice of $\Phi_n$, Proposition \ref{p-w} could yield the exponential convergence in terms of the 
 $V$-variation
 norm given for any probability measures $\mu_1$ and $\mu_2$ as 
 $\|\mu_1-\mu_2\|_{\operatorname{Var}, V}:=\sup_{|f|\le V}|\mu_1(f)-\mu_2(f)|.$
The details will be provided in
Subsection \ref{subsection22}.
As pointed out at the beginning of \cite[Subsection 2.1]{EGZ}, the choice of those additive metrics is partially motivated by the paper \cite{HM}.
Concerning the exponential convergence in terms of $L^1$-Wasserstein-type distances, we will take
\begin{equation}\label{eq:Phinmult}
\Phi_n(x,y)=
\Phi(x,y):=\psi(|x-y|)(1+\varepsilon (V(x)+V(y)))
\end{equation}
for all $n\ge1$ and $x,y\in \R^d$, with a bounded concave function $\psi$ on $[0,\infty)$ satisfying $\psi(0)=0$ and a Lyapunov function $V\in C^2(\R^d)$. Note that $\lim_{|x-y|\to0}\Phi_n(x,y)=0$, while the function $\Phi_n$ given by \eqref{eq:Phinadd}, associated with the variation-type distance above, is discontinuous near the diagonal. The details of the construction involving \eqref{eq:Phinmult} will be provided in
Subsection \ref{Sec2.3}.

In order to construct the operator $\widetilde{L}$, we will adopt the refined basic coupling for the SDE \eqref{s1} from \cite{Lwcw}.
To describe the refined basic coupling, we start with a coupling for the L\'evy measure $\nu$.
To this end, we introduce the notation
$$x\to x+z,\quad \nu(dz)$$
for a transition from a point $x \in \R^d$ to the point $x + z$, with the jump intensity $\nu(dz)$.
 Roughly speaking, the essential idea of the basic coupling is to make the two marginal processes jump to the same point with the biggest possible rate, where the biggest jump rate is the maximal common part of the jump intensities. In the L\'evy setting, it takes the form
$$\mu_{y-x}(dz):=[\nu\wedge (\delta_{y-x} \ast\nu)](dz),$$ where $x$ and $y$ correspond to the positions of the two marginal processes before the jump.
Note that
for $x\neq0$,
\begin{equation}\label{ee:ffeerr}
    \mu_x(\R^d)
    \leq  \int_{\{|z|\le|x|/2\}}\,\left( \delta_x*\nu\right)(dz)+\int_{\{|z|>|x|/2\}}\,\nu(dz) \leq 2 \int_{\{|z|\ge|x|/2\}}\,\nu(dz)<\infty,
\end{equation}
i.e., $\mu_x$ is a finite measure on $(\R^d, \mathscr{B}(\R^d))$ for any $x\neq0$.
Let $\kappa_0$ be
a fixed constant. For any $x$, $y\in\R^d$ and $\kappa\in(0, \kappa_0]$, define
 $
  (x-y)_{\kappa}=\big(1\wedge \frac{\kappa}{|x-y|}\big)(x-y).
 $
We use the convention that $(x-x)_\kappa=0$. Then the refined basic coupling of $L$, first introduced in \cite[Section 2.1]{Lwcw}, is given as follows:
  \begin{equation}\label{basic-coup-3}
  (x,y)\longrightarrow
    \begin{cases}
    (x+z, y+z+(x-y)_\kappa), & \frac12 \mu_{(y-x)_\kappa}(dz),\\
    (x+z, y+z+(y-x)_\kappa), & \frac12 \mu_{(x-y)_\kappa}(dz),\\
    (x+z, y+z), & \big(\nu - \frac12 \mu_{(y-x)_\kappa}  -\frac12 \mu_{(x-y)_\kappa}\big)(dz).
    \end{cases}
  \end{equation}
We see that if $|x-y|\leq \kappa$, then \eqref{basic-coup-3} is reduced into
$$
  (x,y)\longrightarrow
    \begin{cases}
    (x+z, y+z+(x-y)), & \frac12 \mu_{y-x}(dz),\\
    (x+z, y+z+(y-x)), & \frac12 \mu_{x-y}(dz),\\
    (x+z, y+z), & \big(\nu - \frac12 \mu_{y-x}  -\frac12 \mu_{x-y}\big)(dz).
    \end{cases}
$$ The first row in the coupling above corresponds to the two marginal processes jumping to the same point. Note that the distance between the two marginals decreases from $|x-y|$ to $|(x+z)- (y+z+(x-y))|=0$; this is indeed the idea of the \emph{basic coupling} for Markov $q$-processes in \cite[Example 2.10]{Chen}.
However, unlike in the basic coupling, here we send the marginal processes to the same point only with half the maximal possible probability, since we want to apply the synchronous (rather than the independent) coupling to the remaining jumps (see \cite[Section 2.1]{Lwcw} for more details).
Hence we also need the second row, which
 corresponds to the change of the distance from $|x-y|$ to $2|x-y|$, and the last row, which is just a synchronous movement.
 If $|x-y|>\kappa$, then according to the first two rows in \eqref{basic-coup-3}, the distances after the jump are $|x-y|-\kappa$ and $|x-y|+\kappa$, respectively.
Hence the marginal processes can jump to the same point only if they are already close to each other before the jump (based on the threshold parameter $\kappa > 0$). Otherwise, they can only move slightly closer towards each other.
    Note that the introduction of $\kappa$ prevents a situation in which the two marginal processes could never couple if they only have finite range jumps (i.e., the sizes of jumps are bounded).
 Since this construction can be interpreted as a modification of the basic coupling from \cite[Example 2.10]{Chen}, we call the coupling given by \eqref{basic-coup-3} the \emph{refined basic coupling} for pure jump L\'evy processes.

 Motivated by the
 intuitive description above,
 we define for any $F\in C_b^2(\R^d\times \R^d)$,
\begin{equation}\label{e:couplg} \begin{split}\widetilde  L& F(x,y)\\
=&\frac{1}{2}\int\Big(F(x+z,y+z+(x-y)_\kappa)-F(x,y)-\langle\nabla_x F(x,y),z\rangle \I_{\{|z|\le1\}}\\
&\qquad\qquad -\langle\nabla_y F(x,y),z+(x-y)_\kappa\rangle\I_{\{|z+(x-y)_\kappa|\le 1\}}\Big)\,\mu_{(y-x)_\kappa}(dz)\\
&+\frac{1}{2}\int\Big(F(x+z,y+z+(y-x)_\kappa)-F(x,y)-\langle\nabla_x F(x,y),z\rangle\I_{\{|z|\le 1\}}\\
&\qquad\qquad -\langle\nabla_y F(x,y),z+(y-x)_\kappa\rangle\I_{\{|z+(y-x)_\kappa|\le 1\}}\Big)\,\mu_{(x-y)_\kappa}(dz)\\
&+\int\Big(F(x+z,y+z)-F(x,y)-\langle\nabla_x F(x,y), z\rangle\I_{\{|z|\le 1\}}\\
 &\qquad\qquad-\langle\nabla_y F(x,y), z\rangle\I_{\{|z|\le 1\}}\Big)\Big(\nu-\frac{1}{2}\mu_{(y-x)_\kappa}-\frac{1}{2}\mu_{(x-y)_\kappa}\Big)(dz)\\
&+\langle\nabla_x F(x,y),b(x)\rangle+\langle\nabla_y F(x,y), b(y)\rangle.\end{split}\end{equation}
Due to the facts that $\mu_x$ is a finite measure on $(\R^d,\mathscr{B}(\R^d))$ for all $x\neq 0$ and
$
 \delta_{x}* \mu_{-x}=\mu_{x}
$ (see \cite[Corollary
A.2]{Lwcw}), we can check that $\widetilde L$ defined by \eqref{e:couplg} is a coupling operator for the generator $L$ given by \eqref{SDE-generator}.

Below we briefly show that the coupling operator $\widetilde L$ generates a Markovian coupling process, which  
can be obtained as
the unique strong solution to an SDE on $\R^{2d}$.
 First, by the L\'evy-It\^{o}
decomposition, there exists a Poisson random measure $N$ associated
with $(Z_t)_{t\ge0}$ such that
$$dZ_t=\int_{\{|z|>1\}}z\, N(dt,dz)+\int_{\{|z|\le1\}} z\,\widetilde
N(dt,dz),$$ where $\widetilde N(dt,dz)=N(dt,dz)-dt\,\nu(dz)$ is the
compensated Poisson random measure.
Following the ideas from
\cite[Section 2.2]{Maj} and  \cite[Section 2.2]{Lwcw}, we extend the Poisson random measure $N$ from  $\R_+\times \R^d$ to $\R_+\times \R^d\times [0,1]$ in the following way \begin{equation*}\label{Poisson-meas}
  N(ds,dz,du) =\sum_{\{0<{s'}\le s, \Delta Z_{s'}\neq 0\}}\delta_{(s', \Delta Z_{s'})}(ds,dz)\I_{[0,1]}(du)
  \end{equation*}
and we write
  $$Z_t=\int_0^t\int_{\R^d\times[0,1]} z\,\bar{N}(ds,dz,du),$$
where
  $$\bar{N}(ds,dz,du)=\I_{\{|z|>1\}\times[0,1]}N(ds,dz,du)+\I_{\{|z|\le1\}\times [0,1]}\widetilde{N}(ds,dz,du).$$
We further define the control function $\rho$ as follows: for any $x,z\in\R^d$,
  $$\rho(x,z)
  =\frac{\mu_x(dz)}{\nu(dz)}
  =\frac{\nu \wedge (\delta_x\ast \nu) (dz)}{\nu(dz)}\in [0,1]. $$ Recall that for any $x\neq0$, $(x)_\kappa=(1\wedge (\kappa/|x|))x$.
 Fix any $x,y\in \R^d$ with $x\neq y$. We consider the system of equations:
  \begin{equation}\label{SDE-coup-eq-1}
  \begin{cases}
  dX_t=b(X_t)\, dt+dZ_t,& X_0=x,\\
  dY_t= b(Y_t)\,dt+ dZ_t+dL^\ast_t, & Y_0=y,
  \end{cases}
  \end{equation}
   where
  $$dL^\ast_t=\int_{\R^d\times [0,1]}
  S(U_{t-},z,u) \,{N}(dt,dz,du)$$with
 $U_t=X_t-Y_t$ and
  $$
  S(U_t,z,u)=(U_{t})_{\kappa} \Big[ \I_{\{u\le \frac12 \rho((-U_{t})_{\kappa},z)\}} - \I_{\{\frac12 \rho((-U_{t})_{\kappa},z)< u\le \frac12 [\rho((-U_{t})_{\kappa},z)+\rho((U_{t})_{\kappa},z)]\}}\Big]. $$
    According to \cite[Propositions 2.2 and 2.3]{Lwcw}, the SDE \eqref{SDE-coup-eq-1} has a unique strong solution, which is  a non-explosive coupling process $(X_t,Y_t)_{t\ge0}$ of the SDE \eqref{s1}.
    Since we assume that \eqref{s1} (i.e., the first equation
in \eqref{SDE-coup-eq-1}) has a non-explosive and pathwise unique strong solution $(X_t)_{t\ge0}$, the
sample paths of $(Y_t)_{t\ge0}$ can be obtained by repeatedly modifying those of the unique strong solution of the
following equation:
$$d\widetilde Y_t=b(\widetilde Y_t)\,dt + dZ_t,\quad Y_0=y$$ with aid of the so-called interlacing technique. Hence the uniqueness of the strong solution to \eqref{SDE-coup-eq-1} immediately follows; see the proof of \cite[Proposition 2.2]{Lwcw}.
  Note
  also
  that in this argument we do not need to assume that $b(x)$ is locally Lipschitz continuous if the SDE \eqref{s1} has a unique strong solution (which can be the case for non-locally Lipschitz drifts too, see e.g. \cite{CSZ, kuehn-rs, Po1, Zhang1}).
  Moreover, the generator of $(X_t,Y_t)_{t\ge0}$ is the refined basic coupling operator
  $\widetilde L$ defined by \eqref{e:couplg},
  and $X_t=Y_t$ for any $t\ge T,$ where $T=\inf\{t\ge0: X_t=Y_t\}$ is the coupling time of the process $(X_t,Y_t)_{t\ge0}$.
  In particular, the refined basic coupling $\widetilde L$ generates a Markovian coupling process $(X_t,Y_t)_{t\ge0}$ which satisfies the assumptions in Proposition \ref{p-w}.

 \smallskip

  For   the
  later use, we further show two properties for the coupling operator $
 \widetilde L$ and the coupling process $(X_t,Y_t)_{t\ge0}$. First, due to
 \eqref{e:couplg} and
 the fact (see \cite[Corollary
A.2]{Lwcw} again) that
$
 \delta_{x}* \mu_{-x}=\mu_{x},
$
it holds that, for any $\psi\in C_b^1([0,\infty))$ and $x$, $y\in\R^d$ with $x\neq y$,
  \begin{equation}\label{proofth2544}\allowdisplaybreaks\aligned
    &\widetilde{L} \psi(|x-y|) \\   &=\frac{1}{2} \mu_{(x-y)_{\kappa}}(\R^d)\Big[\psi\big(|x-y|+\kappa\wedge |x-y|\big) +  \psi\big(|x-y|-\kappa\wedge |x-y|\big)\\
    &\hskip85pt  - 2\psi(|x-y|)\Big] +\frac{\psi'(|x-y|)}{|x-y|}\langle b(x)-b(y),x-y\rangle.
  \endaligned
  \end{equation}
Second,
let $(X_t,Y_t)_{t\ge0}$ be the coupling process constructed above.
Recall that for any $t\ge0$, $U_t=X_t-Y_t$.
Then, it follows from the system \eqref{SDE-coup-eq-1} that
  \begin{equation*}
  \begin{split}
  dU_t &=(b(X_t)-b(Y_t))\, dt - \int_{\R^d\times [0,1]}  S(U_{t-},z,u)\, {N}(dt,dz,du).
  \end{split}
  \end{equation*}
Take $\psi\in C_b^1([0,\infty))$ with $\psi\ge0$. By the It\^{o} formula,
  \begin{equation}\label{e:coup-func}\begin{split}
  d\psi(|U_t|)
  =& \frac{\psi'(|U_t|)}{|U_t|} \<U_t, b(X_t)-b(Y_t)\> \, dt\\
  &+\int_{\R^d\times [0,1]} \big(\psi(|U_{t-}-S(U_{t-},z,u)|)-\psi(|U_{t-}|)\big)\, N(dt,dz,du)\\
  =& \frac{ \psi'(|U_t|)}{|U_t|}\langle U_t, b(X_t)-b(Y_t)\rangle\,dt\\
&+\int_{\R^d\times [0,1]}(\psi(|U_{t-}-S(U_{t-},z,u)|)-\psi(|U_{t-}|))\,\nu(dz)\,du\,dt\\
&+\int_{\R^d\times
[0,1]}(\psi(|U_{t-}-S(U_{t-},z,u)|)-\psi(|U_{t-}|))\,\widetilde{N}(dt,dz,du).
\end{split} \end{equation}
This along with \eqref{proofth2544} implies that we can
rewrite \eqref{e:coup-func} as follows
$$d\psi(|U_t|)=\widetilde{L}\psi(|U_t|)\,dt+ d M_t^\psi,$$ where
$$dM^\psi_t=\int_{\R^d\times
[0,1]}(\psi(|U_{t-}-S(U_{t-},z,u)|)-\psi(|U_{t-}|))\,\widetilde{N}(dt,dz,du)$$ is a local martingale.
We also note that \eqref{proofth2544} can be deduced from \eqref{e:coup-func} directly. In the sequel the coupling operator for the L\'{e}vy process $Z$ without a drift (i.e., with $b(x) = 0$ for all $x\in\R^d$ in \eqref{s1}) will be denoted by $\widetilde{L}_Z$.

\smallskip

In order to deal with the SDE \eqref{non-1} with distribution-dependent drift in Section \ref{section3},
we will
need to
introduce a new coupling for \eqref{s1} by combining
the refined basic coupling
with the synchronous coupling.
To this end, for any fixed $\delta > 0$, let $\phi_\delta : [0,\infty) \to [0,1]$ be a smooth function such that
\begin{equation}\label{e:testfunction}\phi_\delta(r)=\begin{cases} = 0,\quad & 0\le r \le \delta/2,\\
\in [0,1],\quad & \delta/2\le r\le \delta,\\
= 1,\quad & r\ge \delta.\end{cases} \end{equation} We then define a coupling operator $\widetilde{L}_Z^{\delta}$ given for any $F\in C_b^2(\R^d\times \R^d)$ by
\begin{equation}\label{truncatedCoupling}
\begin{split}
\widetilde{L}_Z^{\delta}F(x,y)
=&
\phi_\delta(|x-y|)\cdot\widetilde{L}_Z F(x,y)+(1-\phi_\delta(|x-y|))\cdot\widetilde{L}_{Z,*} F(x,y),
\end{split}
\end{equation} where $\widetilde{L}_{Z,*}$ denotes the synchronous coupling operator for the L\'{e}vy process $Z$ with the L\'{e}vy measure $\nu$, i.e., for any $F\in C_b^2(\R^d\times \R^d)$, \begin{align*} \widetilde{L}_{Z,*} F(x,y)=\int\Big(F(x+z,y+z)-F(x,y)&-\langle\nabla_x F(x,y), z\rangle\I_{\{|z|\le 1\}}\\
&-\langle\nabla_y F(x,y), z\rangle\I_{\{|z|\le 1\}}\Big)\,\nu(dz).\end{align*}
Similarly as
for $\widetilde{L}_Z$, we can prove that $\widetilde{L}_Z^{\delta}$ defined this way is indeed a coupling operator for
the L\'{e}vy process $Z$, cf.\  \cite[Section 2]{Lwcw}.
The coupling works as the refined basic coupling when the distance between the marginal processes is larger than $\delta$,
is the synchronous coupling when the distance between the marginal processes is smaller than $\delta/2$, and is a mixture of the refined basic coupling and the synchronous coupling in the remaining case. We can construct a coupling process $(X_t,Y_t^{\delta})_{t \geq 0}$
with the generator $\widetilde{L}^\delta$ defined for $F \in C_b^2(\R^d\times \R^d)$ as
\begin{equation*}
	\widetilde{L}^\delta F(x,y) = \widetilde{L}_Z^\delta F(x,y) + \langle \nabla_x F(x,y) , b(x) \rangle + \langle \nabla_y F(x,y) , b(y) \rangle
\end{equation*}	
 exactly as described above by considering the system of SDEs
\begin{equation*}
\begin{cases}
dX_t=b(X_t)\, dt+dZ_t,& X_0=x,\\
dY_t^{\delta}= b(Y_t^{\delta})\,dt+ dZ_t+dL^{\ast,\delta}_t, & Y_0^{\delta}=y,
\end{cases}
\end{equation*}
where
\begin{equation}\label{Ldelta}
dL^{\ast,\delta}_t=\int_{\R^d\times [0,1]}S^{\delta}(U^\delta_{t-}, z,u) \,{N}(dt,dz,du)
\end{equation}
with
\begin{equation}\label{Vdelta}\begin{split}
S^\delta(U^\delta_{t},z,u)=&(U_{t}^{\delta})_{\kappa}\Big[ \I_{\{u\le \frac12 \rho((-U_{t}^{\delta})_{\kappa},z)\}}\\
 & \qquad\quad - \I_{\{\frac12 \rho((-U_{t}^{\delta})_{\kappa},z)< u\le \frac12 [\rho((-U_{t}^{\delta})_{\kappa},z)+\rho((U_{t}^{\delta})_{\kappa},z)]\}}\Big]
 \phi_\delta(|U_t^\delta|)\end{split}
\end{equation}
and  $U_t^{\delta} = X_t - Y_t^{\delta}$.

\  \

In the next two subsections, we will apply the refined basic coupling operator $\widetilde L$ to study the exponential ergodicity for the SDE \eqref{s1}.
Hence,
according to Proposition \ref{p-w}, the main task is to construct
an appropriate
sequence of non-negative functions $\{\Phi_n(x,y)\}_{n\ge1}$ such that \eqref{e:proposition} is satisfied. By the remarks below the proof of Proposition \ref{p-w},
in
Subsections \ref{subsection22} and \ref{Sec2.3}
we will consider convergence in terms of additive distance and multiplicative distance, respectively.

\subsection{Exponential ergodicity for the SDE \eqref{s1}: additive distance}\label{subsection22}

In this part, we assume the following conditions.

{\noindent{\bf Assumption (A)}\it
\begin{itemize}
\item[(i)] There is a constant $K_1>0$ such that for all $x,y\in
\R^d$,
$$\langle b(x)-b(y),x-y\rangle\le K_1(|x-y|^2\vee |x-y|).$$
\item[(ii)]
    There is a constant $0<\kappa_0\le1$ such that
  \begin{equation}\label{th10}
  J(\kappa_0):=\inf_{x\in \R^d:\, |x|\le \kappa_0} \big[\nu\wedge (\delta_x\ast \nu)\big]( \R^d)>0.
  \end{equation}
\item[(iii)] There are a $C^2$-function $V:\R^d\to [1,\infty)$ and constants $C, \lambda>0$ such that $V(x)\to \infty$ as $|x|\to \infty$, and
\begin{equation}\label{e:ly} LV(x)\le C-\lambda V(x),\quad x\in \R^d, \end{equation}
where $L$ is the infinitesimal generator given by \eqref{SDE-generator}.
\end{itemize}}

We emphasise that in this section we always assume that the SDE \eqref{s1} has a unique strong solution. Note that condition (i) is weaker than the standard one-side Lipschitz condition (see Assumption {\bf (B)} (i) below).
In particular, it is easy to see that condition (i) is satisfied
 for any bounded measurable function $b$.
Note that in the one-dimensional case the SDE \eqref{s1} driven by a symmetric $\alpha$-stable process with $\alpha\in(1,2)$ has a strong solution, even when $b(x)$ is only bounded measurable; see \cite{TTW}.
However, in the general case
some additional properties of the drift and the driving noise may be required in order to guarantee existence of a unique strong solution to \eqref{s1}, as discussed at the beginning of this section. Moreover, in order to construct a Lyapunov function $V$ satisfying \eqref{e:ly} we may have to impose
some further assumptions on $b$ and the driving L\'{e}vy process $Z$. For example, if
$\int_{\{|z|\ge1\}}|z|\,\nu(dz)<\infty$ and there are constants $c_0$, $l_0>0$ such that $\langle b(x) , x \rangle \leq - c_0|x|^2$ for all $x\in \R^d$ with $|x|\ge l_0$,
then \eqref{e:ly} holds with a radial function $V \in C^2(\R^d)$ such that
 $V(x)\ge1$ for all $x\in \R^d$ and
 $V(x) = 1+|x|$ for $|x| \ge 1$,
see Lemma \ref{lem:var2} and Remark \ref{remarkLyapunovForStandardSDE} for details.
Alternatively, suppose that $\int_{\{|z|\ge1\}}|z|\,\nu(dz)<\infty$ does not hold, but we assume the weaker condition $\int_{\{|z|\ge1\}}|z|^{\alpha}\,\nu(dz)<\infty$ for some $\alpha \in (0,1)$. In this case, if there are constants $c_0$, $l_0>0$ such that $\langle b(x) , x \rangle \leq - c_0|x|^2$ for all $x\in \R^d$ with $|x|\ge l_0$, then \eqref{e:ly} holds with $V(x) = (1+|x|^2)^{\beta/2}$ for any $\beta \in (0,\alpha]$, cf.\ (the proof of) \cite[Lemma 3.1]{HMW}.

\begin{theorem}\label{th1:le} Under Assumption {\bf (A)},
there are constants $C_0,\lambda_0>0$ such that for all $x,y\in \R^d$ and $t>0$,
$$W_{\Phi}(P_t(x,\cdot), P_t(y,\cdot))\le C_0 e^{-\lambda_0t} \Phi(x,y),$$ where $\Phi(x,y)=(V(x)+V(y))\I_{\{x\neq y\}}.$
 \end{theorem}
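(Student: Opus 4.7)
The plan is to invoke Proposition~\ref{p-w} with the refined basic coupling operator $\widetilde L$ from \eqref{e:couplg} (with $\kappa=\kappa_0$ as in Assumption~(A)(ii)) and an additive family $\Phi_n$ of the form \eqref{eq:Phinadd}. Specifically, I would pick a bounded, strictly concave $C^2$-function $\psi:[0,\infty)\to[0,\infty)$ with $\psi(0)=0$, $\psi'(0+)<\infty$, $\psi'>0$, and $\psi$ constant on $[R,\infty)$ for some large $R$; a constant $a>0$; a constant $\varepsilon>0$; and smooth non-decreasing functions $\psi_n\in C^2([0,\infty))$ approximating $a+\psi$ on $[1/n,\infty)$ with $\psi_n(0)=0$. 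Set $\Phi_n(x,y)=\psi_n(|x-y|)+\varepsilon(V(x)+V(y))$ on $\{|x-y|\ge 1/n\}$ and $\Phi_n(x,x)=0$. Since $V\ge 1$, $a>0$, and $\psi$ is bounded, the limit $\Phi_\infty=\liminf_n\Phi_n$ is bi-Lipschitz equivalent to $\Phi(x,y)=(V(x)+V(y))\I_{\{x\neq y\}}$, so an exponential contraction in $W_{\Phi_\infty}$ transfers to $W_\Phi$ up to a multiplicative constant, yielding the stated conclusion.

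The key estimate is $\widetilde L\Phi_n\le-\lambda_0\Phi_n$ on $\{1/n\le|x-y|\le n\}$. Since $\widetilde L$ is a coupling of $L$, I split $\widetilde L\Phi_n=\widetilde L\psi_n(|x-y|)+\varepsilon(LV(x)+LV(y))$, and Assumption~(A)(iii) immediately yields $\varepsilon(LV(x)+LV(y))\le 2\varepsilon C-\varepsilon\lambda(V(x)+V(y))$. For the $\psi_n$-part, I would use formula \eqref{proofth2544}, writing $r:=|x-y|$, and analyse the jump contribution
\[
\tfrac12\mu_{(x-y)_{\kappa_0}}(\R^d)\bigl[\psi_n(r+\kappa_0\wedge r)+\psi_n(r-\kappa_0\wedge r)-2\psi_n(r)\bigr]
\]
in two regimes. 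For $r\le\kappa_0$, this equals $\tfrac12[\nu\wedge(\delta_{x-y}\ast\nu)](\R^d)\bigl[\psi_n(2r)-2\psi_n(r)\bigr]$, and since $[\nu\wedge(\delta_{x-y}\ast\nu)](\R^d)\ge J(\kappa_0)>0$ and $\psi$ is strictly concave near $0$ with $\psi(0)=0$, we obtain a bound of order $-c_1\psi_n(r)$. For $r>\kappa_0$, strict concavity of $\psi$ on $[\kappa_0,R]$ combined with $\mu_{(x-y)_{\kappa_0}}(\R^d)\ge J(\kappa_0)>0$ again gives a strictly negative contribution, uniformly bounded away from zero on $[\kappa_0,R]$, while for $r\ge R$ the term vanishes since $\psi_n$ is constant there. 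Assumption~(A)(i) bounds the drift contribution by $\frac{\psi_n'(r)}{r}\cdot K_1(r^2\vee r)=K_1\psi_n'(r)(1\vee r)$; since $\psi$ is bounded and concave, $\psi_n'(r)(1\vee r)$ is uniformly bounded in $r$.

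Putting these pieces together gives $\widetilde L\Phi_n\le-\lambda_0(a+\psi_n(r))-\varepsilon\lambda(V(x)+V(y))+M$ on the set $\{r\ge 1/n\}$ for a uniform constant $M$ absorbing the drift term and the Lyapunov constant $2\varepsilon C$. Choosing $a$ large enough that $\lambda_0 a\ge M$ (or equivalently absorbing $M$ into the $a$-part of $\Phi_n$) and adjusting $\varepsilon,\lambda_0$ so that $\lambda_0\le\min\{c_1,\varepsilon\lambda\}$, one arrives at $\widetilde L\Phi_n\le-\lambda_0\Phi_n$ on $\{1/n\le|x-y|\le n\}$ uniformly for large $n$. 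Proposition~\ref{p-w} then delivers $W_{\Phi_\infty}(P_t(x,\cdot),P_t(y,\cdot))\le\Phi_\infty(x,y)e^{-\lambda_0 t}$, and the equivalence $\Phi_\infty\asymp\Phi$ on $\{x\neq y\}$ completes the proof. The main obstacle is the simultaneous design of $\psi$ and calibration of the constants $a,\varepsilon,\lambda_0$ so that the coupling contraction on $\psi_n$ at small and moderate scales, the positive (merely bounded) drift contribution permitted by the weak one-sided condition (A)(i), and the Lyapunov decay that dominates for large $|x|+|y|$, all fit together uniformly in $n$; a secondary subtlety is to arrange the mollification $\psi_n\to a+\psi$ so that concavity and the pointwise bounds are preserved on $[1/n,\infty)$ and the Fatou limit in the proof of Proposition~\ref{p-w} is justified.
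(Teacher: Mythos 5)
Your approach is structurally the same as the paper's: refined basic coupling, additive family $\Phi_n(x,y)=\psi_n(|x-y|)+\varepsilon(V(x)+V(y))$, and Proposition~\ref{p-w}. However, the sketch stops short of the step that actually makes the argument work, and the ``putting pieces together'' claim contains an error.

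First, the specification of $\psi$ is internally inconsistent: you ask for $\psi'>0$ \emph{and} $\psi$ constant on $[R,\infty)$; if $\psi$ is $C^2$ and constant beyond $R$, then $\psi'(R^-)=\psi''(R^-)=0$, so the curvature vanishes as $r\to R^-$. Consequently, the claim that the jump contribution on $(\kappa_0,R]$ is ``uniformly bounded away from zero'' is false near $r=R$. The paper avoids this by taking $\psi$ to be $1-e^{-cr}$ on $(0,2l_0]$ and then a strictly increasing (but bounded and very flat) extension, so that $\psi''(r)\leq -c^2 e^{-cl_0}<0$ uniformly on $(\kappa_0,l_0]$. Second, the inequality $\widetilde L\Phi_n\le-\lambda_0(a+\psi_n(r))-\varepsilon\lambda(V(x)+V(y))+M$ does not hold on all of $\{r\geq 1/n\}$: for $r$ in a neighbourhood of $R$ and beyond, the $\psi_n$-contribution of $\widetilde L$ is only $\leq 0$ (or even merely bounded by a \emph{positive} constant once the drift term $K_1 r\psi'(r)$ is included just below $R$), not $\leq-\lambda_0(a+\psi_n(r))$. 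In that regime, the needed decay can only come from $-\varepsilon\lambda(V(x)+V(y))$, and that requires knowing that $|x-y|>R$ forces $V(x)+V(y)$ to be large. This is precisely why the paper sets $l_0:=\sup_{(x,y)\in S_0}|x-y|+1$ with $S_0=\{\lambda(V(x)+V(y))\leq 16C\}$: so that $|x-y|>l_0$ implies $(x,y)\notin S_0$ and the Lyapunov decay dominates. Third, the constants cannot be chosen independently: $c$ must be chosen large relative to $K_1 l_0/(J(\kappa_0)\kappa_0^2)$ so that the concavity of $\psi$ at scale $l_0$ beats the drift, $a$ then depends on $c$, and $\varepsilon$ is fixed so that $2\varepsilon C$ is dominated by the jump gain $\tfrac14 J(\kappa)\kappa^2 c^2 e^{-cl_0}$. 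This interlocking calibration is exactly what you label as ``the main obstacle,'' but identifying an obstacle is not the same as resolving it --- as things stand, the crucial inequality $\widetilde L\Phi_n\leq-\lambda_0\Phi_n$ on $\{1/n\leq|x-y|\leq n\}$ is not established.

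To close the gap, you should (a) define $l_0$ from the Lyapunov sublevel set as above, (b) choose $\psi(r)=1-e^{-cr}$ on $(0,2l_0]$ with $c\gtrsim K_1 l_0/(J(\kappa_0)\kappa_0^2)$ and a bounded, strictly increasing extension for $r>2l_0$ (rather than a constant, to keep the $C^2$ concavity estimates clean), (c) verify separately the regimes $r\leq\kappa_0$, $\kappa_0<r\leq l_0$, and $r>l_0$ (where $(x,y)\notin S_0$), and (d) pick $a$, $\varepsilon$, $\lambda_0$ as explicit functions of $K_1$, $J(\kappa_0)$, $\kappa_0$, $l_0$, $\lambda$, $C$. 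This is the paper's proof.
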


As a direct consequence of Theorem \ref{th1:le}, we have

\begin{corollary}\label{co1:le} Under Assumption {\bf (A)}, the process $(X_t)_{t\ge0}$ is exponentially ergodic; more explicitly, there are a unique invariant probability measure $\mu$, a constant $\lambda_0$ and a measurable function $C_0(x)$ such that for all $x\in \R^d$ and $t>0$,
$$\|P_t(x,\cdot)-\mu\|_{\operatorname{Var}, V}\le C_0(x) e^{-\lambda_0t},$$ where for any probability measure $\mu_1$ and $\mu_2$,
$$\|\mu_1-\mu_2\|_{\operatorname{Var}, V}=\sup_{|f|\le V}|\mu_1(f)-\mu_2(f)|.$$  \end{corollary}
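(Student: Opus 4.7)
The plan is to derive the corollary from Theorem \ref{th1:le} in three steps: (1) construct the invariant measure $\mu$ and verify $\mu(V)<\infty$, (2) compare the $V$-variation distance with $W_\Phi$, and (3) combine these with the contraction estimate.

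\smallskip

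\emph{Step 1: Existence and a moment estimate for the invariant measure.} From Assumption \textbf{(A)}(iii), a standard application of It\^{o}'s/Dynkin's formula gives $\Ee^x V(X_t)\le V(x)e^{-\lambda t}+C/\lambda$. Since $V(x)\to\infty$ as $|x|\to\infty$, this yields tightness of $\{\frac{1}{T}\int_0^T P_s(x_0,\cdot)\,ds\}_{T>0}$, and Krylov--Bogolyubov produces at least one invariant probability measure $\mu$. Applying the Lyapunov bound to truncations $V\wedge N$ (continuous and bounded since $V\in C^2$) along a weakly convergent subsequence, then letting $N\to\infty$ by monotone convergence, gives $\mu(V)\le C/\lambda<\infty$.

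\smallskip

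\emph{Step 2: Comparison of distances.} For any coupling $\Pi\in\mathscr{C}(\mu_1,\mu_2)$ and any $|f|\le V$,
\begin{equation*}
|\mu_1(f)-\mu_2(f)|
=\Big|\int(f(x)-f(y))\I_{\{x\neq y\}}\,d\Pi(x,y)\Big|
\le \int (V(x)+V(y))\I_{\{x\neq y\}}\,d\Pi(x,y)=\int \Phi(x,y)\,d\Pi(x,y).
\end{equation*}
Taking the infimum over $\Pi$ and then the supremum over $|f|\le V$ yields
$\|\mu_1-\mu_2\|_{\operatorname{Var},V}\le W_\Phi(\mu_1,\mu_2).$

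\smallskip

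\emph{Step 3: The exponential bound and uniqueness.} Using invariance $\mu=\mu P_t$ and gluing couplings measurably in $y$, we obtain the standard convexity inequality
\begin{equation*}
W_\Phi(P_t(x,\cdot),\mu)=W_\Phi(\delta_x P_t,\mu P_t)\le \int W_\Phi(P_t(x,\cdot),P_t(y,\cdot))\,\mu(dy).
\end{equation*}
Plugging in Theorem \ref{th1:le} and using $\mu(V)<\infty$ from Step 1,
\begin{equation*}
\|P_t(x,\cdot)-\mu\|_{\operatorname{Var},V}\le W_\Phi(P_t(x,\cdot),\mu)\le C_0 e^{-\lambda_0 t}\int (V(x)+V(y))\,\mu(dy)=C_0(x)e^{-\lambda_0 t},
\end{equation*}
with $C_0(x):=C_0(V(x)+\mu(V))$, which is finite and measurable in $x$. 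For uniqueness, if $\mu'$ were another invariant probability measure, the same integration argument against a coupling of $\mu$ and $\mu'$ gives $\|\mu-\mu'\|_{\operatorname{Var},V}\le W_\Phi(\mu,\mu')\le C_0 e^{-\lambda_0 t}(\mu(V)+\mu'(V))\to 0$ as $t\to\infty$, provided $\mu'(V)<\infty$, which holds by Step 1 applied to $\mu'$. Hence $\mu=\mu'$.

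\smallskip

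The only nontrivial obstacle is the verification $\mu(V)<\infty$ together with the measurable selection needed to write $\int W_\Phi(P_t(x,\cdot),P_t(y,\cdot))\,\mu(dy)$ as an upper bound for $W_\Phi(P_t(x,\cdot),\mu)$; both are standard but require care because $\Phi$ is discontinuous on the diagonal. The Lyapunov estimate in Step 1 takes care of the former, and the usual disintegration of optimal (or near-optimal) couplings along a regular conditional probability handles the latter.
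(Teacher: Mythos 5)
Your Steps 2 and 3 are sound and mirror the paper's route: compare $\|\cdot\|_{\operatorname{Var},V}$ with $W_\Phi$ (the paper cites the Hairer--Mattingly equality, but the inequality direction you prove is all that is used), then glue the coupling from the proof of Theorem \ref{th1:le} against $\mu(dy)$ to get the exponential bound, using that $\Ee^xV(X_t)<\infty$ and $\mu(V)<\infty$. Your observation that the gluing can be done through the coupling kernel $\widetilde P_t((x,y),\cdot)$ is a clean way to sidestep measurable selection of near-optimal couplings.

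The gap is in Step 1. Krylov--Bogolyubov requires the Feller property of $(P_t)_{t\ge0}$: extracting an invariant measure from a weak limit of Ces\`aro averages relies on passing to the limit in $\nu_{T_n}(P_tf)\to\mu(P_tf)$, which needs $P_tf\in C_b(\R^d)$ for $f\in C_b(\R^d)$. Under Assumption {\bf(A)} alone this is not available, and the paper says so explicitly in the remark after Theorem \ref{th2:le}: the Feller property follows from Theorem \ref{th2:le}, which requires Assumption {\bf(B)}, and ``such an assertion can not be deduced from Theorem \ref{th1:le}.'' The paper's own route (via the cited proof of \cite[Corollary 1.8]{Maj}) avoids Feller: one shows, using Theorem \ref{th1:le}, the convexity inequality from your Step 3, and the identity $\|\cdot\|_{\operatorname{Var},V}=W_\Phi$, that $P_{t_0}$ is a strict contraction on the complete metric space $\{\mu:\mu(V)<\infty\}$ equipped with $\|\cdot\|_{\operatorname{Var},V}$ for $t_0$ large, and then invokes Banach's fixed point theorem. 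Existence, uniqueness, and $\mu(V)<\infty$ all come out simultaneously.

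A smaller issue: in your uniqueness step you invoke ``Step 1 applied to $\mu'$'' to conclude $\mu'(V)<\infty$ for an arbitrary invariant measure $\mu'$. But Step 1 only controls the moment of the specific measure you constructed; establishing $\mu'(V)<\infty$ for every invariant measure is a separate (standard, but nontrivial) consequence of the Foster--Lyapunov condition via Meyn--Tweedie theory \cite{MT}, not a re-run of the Ces\`aro-average bound. In the Banach fixed point route this issue does not arise for uniqueness within $\{\mu:\mu(V)<\infty\}$, but ruling out invariant measures with infinite $V$-moment still requires that Meyn--Tweedie input.
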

\begin{proof}Assumption {\bf(A)} (iii) is the well-known Foster-Lyapunov type condition in the study of stability of Markov processes, see \cite{MT} for more details. In particular,
according to \eqref{e:ly} and \cite[Theorem 3.1]{MT}, we know that
$\Ee^xV(X_t)<\infty$ for all $x\in \R^d$ and $t>0$. On the other
hand, it was proven in \cite[Lemma 2.1]{HM} that for any probability
measures $\mu_1$ and $\mu_2$,
$$\|\mu_1-\mu_2\|_{\operatorname{Var}, V}=W_{\Phi}(\mu_1, \mu_2),$$ where $\Phi(x,y)=(V(x)+V(y))\I_{\{x\neq
y\}}.$ Then, combining these two conclusions with Theorem
\ref{th1:le} and some standard arguments (see for example the proof
of \cite[Corollary 1.8]{Maj}), we can prove the desired
assertion.\end{proof}

\begin{remark}
We give some comments on Assumption {\bf(A)} and Theorem \ref{th1:le}.
 \begin{itemize}
\item[
(i)] We first make remarks on Assumption {\bf (A)}(i) and (ii) respectively. When
$b(x)$ is locally bounded, (i) holds locally, i.e., (i) holds for
all $x,y\in B(0,R)$ and $R>0$,
cf.\ Remark \ref{R:2.6} on how to extend Theorem \ref{th1:le} to hold under a local version of (i).
Condition (ii) is very weak and can be true even for finite L\'evy
measures; see \cite[Proposition 1.5]{SW2} and \cite[Remark 1.7]{Maj}.

\item[
(ii)] Theorem \ref{th1:le} and Corollary \ref{co1:le}
give us a new way to yield the exponential ergodicity for SDEs with
additive L\'evy noises. We emphasize that the common approach to
ergodicity is based on verifying the irreducibility and the strong
Feller property of the associated Markov processes; however, we
believe that such approach is not easy to apply only under
Assumption {\bf (A)}.
We further note that an analogous result for diffusions was shown in \cite[Theorem 2.1]{EGZ} under
an additional
growth condition \cite[Assumption 2.3]{EGZ} for the Lyapunov function $V$ (
which essentially states that
there is a constant $C_0>0$ such that $V(x)\ge C_0(1+|x|)$ for large $|x|$, see \cite[Lemma 2.1]{EGZ}); nevertheless,
in the present setting we do not require this additional assumption.

\item[
(iii)] Explicit estimate for the (rate) constant $\lambda_0$ in Theorem \ref{th1:le} is available at the end of its proof. \end{itemize}
\end{remark}

To prove Theorem \ref{th1:le}, we begin with the following lemma.

\begin{lemma}\label{lem-test-funct}  Let $\psi\in C^1([0,\infty))$.
Then the following hold.
\begin{itemize}
\item[
(i)] If $\psi'$ is decreasing, then for any $0\le \delta\le r$, $$ \psi(r+\delta)+\psi(r-\delta)-2\psi(r)\leq 0.$$
\item[
(ii)] Suppose that $\psi\in C([0,2l_0])\cap C^4((0,2l_0])$ for some $l_0>0$ such that $\psi'> 0$, $\psi''\leq 0,$  $\psi'''\geq 0$ and $\psi^{(4)}\leq 0$ on $(0,2l_0]$. Then, for any  $0\leq \delta \leq r\leq l_0$,
  $$\psi(r+\delta) +\psi(r-\delta)-2\psi(r)\leq \psi''(r)\delta^2.$$
\end{itemize}
\end{lemma}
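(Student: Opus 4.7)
My plan is to prove both parts by representing the second difference $\psi(r+\delta)+\psi(r-\delta)-2\psi(r)$ as an integral of a difference of derivatives and exploiting the hypothesised monotonicity, using progressively higher-order information as we move from (i) to (ii).

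For part (i), I would apply the fundamental theorem of calculus separately to $\psi(r+\delta)-\psi(r)$ and to $\psi(r)-\psi(r-\delta)$, and after a trivial change of variable in the second integral obtain
$$
\psi(r+\delta)+\psi(r-\delta)-2\psi(r) \;=\; \int_0^\delta \bigl[\psi'(r+s)-\psi'(r-s)\bigr]\,ds.
$$
Because $\psi'$ is decreasing and $r+s\ge r-s$, the integrand is pointwise non-positive, which gives the claim.

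For part (ii), I would apply the same idea to the auxiliary function
$$
g(\delta) \;:=\; \psi(r+\delta)+\psi(r-\delta)-2\psi(r)-\psi''(r)\delta^2,\qquad 0\le\delta\le r,
$$
which is designed so that $g(0)=g'(0)=0$. Computing the second derivative and reusing the identity from part (i) with $\psi''$ in place of $\psi$ gives
$$
g''(\delta) \;=\; \psi''(r+\delta)+\psi''(r-\delta)-2\psi''(r) \;=\; \int_0^\delta \bigl[\psi'''(r+s)-\psi'''(r-s)\bigr]\,ds.
$$
The hypothesis $\psi^{(4)}\le 0$ on $(0,2l_0]$ makes $\psi'''$ non-increasing, so the integrand is $\le 0$ and hence $g''\le 0$ on $[0,r]$. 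Combined with $g'(0)=0$ this forces $g'\le 0$, and then $g(0)=0$ yields $g\le 0$, which is precisely the asserted bound. Note that only the sign of $\psi^{(4)}$ is needed in the argument; the remaining sign conditions in the hypothesis are presumably used elsewhere in the paper.

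The only subtlety I anticipate is that $\psi$ is assumed $C^4$ only on the half-open interval $(0,2l_0]$ and merely continuous at $0$, so when $\delta=r$ the point $r-\delta=0$ lies on the boundary, where $\psi''$, $\psi'''$, $\psi^{(4)}$ need not be defined. I would handle this by first establishing the inequality on the open region $0\le \delta<r$, where every derivative used above is available and the argument goes through verbatim, and then passing to the limit $\delta\uparrow r$ using continuity of $\psi$ on $[0,2l_0]$; the quadratic term $\psi''(r)\delta^2$ is harmless since $r\le l_0$ is fixed. This limiting step is routine, so I do not foresee any genuine obstacle.
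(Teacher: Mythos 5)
Your proof is correct. Part (i) is the paper's argument written in integral rather than pointwise (mean value theorem) form — essentially the same. Part (ii), however, takes a genuinely different route: the paper applies Taylor's theorem with third-order Lagrange remainder to $\psi(r\pm\delta)$ about $r$, adds the two expansions, and bounds the cubic remainder $\tfrac{\delta^3}{6}\bigl[\psi'''(\xi_1)-\psi'''(\xi_2)\bigr]$ (with $\xi_1\in(r,r+\delta)$, $\xi_2\in(r-\delta,r)$) by zero because $\psi^{(4)}\le0$ makes $\psi'''$ decreasing. You instead form the deficit $g(\delta)=\psi(r+\delta)+\psi(r-\delta)-2\psi(r)-\psi''(r)\delta^2$, note that $g''(\delta)$ is exactly the second difference of $\psi''$, apply part (i) to $\psi''$ to get $g''\le0$, and then integrate twice from $g(0)=g'(0)=0$. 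Both proofs hinge on the single sign fact that $\psi^{(4)}\le0$ makes $\psi'''$ decreasing, and you are right that the remaining sign hypotheses on $\psi',\psi'',\psi'''$ are not needed here (they serve other estimates in the paper). The paper's Taylor computation is a bit shorter; your reduction of (ii) to a second application of (i) is more modular and makes the recursive structure — second differences of $\psi$ controlled by second differences of $\psi''$ — explicit. Your handling of the boundary case $\delta=r$ by a limiting argument is sound.
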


\begin{proof} The assertion
 (i) is trivial if $\delta=0$, thus we assume $\delta>0$.
 By the mean value formula, there exist constants $\xi_1\in (r,r+\delta)$ and $\xi_2\in (r-\delta, r)$ such that
  $$\psi(r+\delta)-\psi(r)=\psi'(\xi_1)\delta$$
and
  $$\psi(r-\delta)-\psi(r)= -\psi'(\xi_2)\delta.$$
Therefore,
  $$\psi(r+\delta)+\psi(r-\delta)-2\psi(r)=(\psi'(\xi_1)-\psi'(\xi_2))\delta\le 0,$$
since $\psi'$ is decreasing.

To prove
(ii), we will still assume $\delta>0$. Similar to the proof of
(i), by the Taylor formula, there exist constants $\xi_1\in (r,r+\delta)$ and $\xi_2\in (r-\delta, r)$ such that
  \begin{align*}
  \psi(r+\delta)&=\psi(r)+ \psi'(r)\delta +\frac12 \psi''(r)\delta^2 +\frac16 \psi'''(\xi_1)\delta^3,\\
  \psi(r-\delta)&=\psi(r)- \psi'(r)\delta +\frac12 \psi''(r)\delta^2 -\frac16 \psi'''(\xi_2)\delta^3.
  \end{align*}
Therefore,
  $$\psi(r+\delta) +\psi(r-\delta)-2\psi(r)= \psi''(r)\delta^2 +\frac{\delta^3}6 \big[\psi'''(\xi_1)- \psi'''(\xi_2)\big] \leq \psi''(r)\delta^2,$$
since $\psi'''$ is decreasing due to $\psi^{(4)}\le 0$.
\end{proof}

\begin{proof}[Proof of Theorem $\ref{th1:le}$]
(i) For any $n\ge1$, define $\Phi_n(x,y)\in C^2(\R^{2d})$ such that
$$\Phi_n(x,y)\begin{cases}=\psi(|x-y|),&\quad 0\le |x-y|\le 1/(n+1),\\
\le a+\psi(|x-y|)+\varepsilon(V(x)+V(y)),&\quad 1/(n+1)\le |x-y|\le 1/n,\\
= a+\psi(|x-y|)+\varepsilon(V(x)+V(y)),&\quad |x-y|\ge 1/n. \end{cases}$$ Here,
$$\psi(r)=\begin{cases}1-e^{-cr},&\quad r\in (0,2l_0],\\
1-e^{-2cl_0}+ce^{-2cl_0}\frac{r-2l_0}{1+r-2l_0},&\quad r>2l_0\end{cases}$$ with
$$l_0=\sup_{(x,y)\in S_0}|x-y|+1,\quad  c=\frac{4K_1l_0}{J(\kappa) \kappa^2}+1$$ and  $$S_0=\left\{(x,y)\in \R^{2d}: \lambda (V(x)+V(y))\le 16 C\right\};$$ the function $V$ and the constants $\lambda, C, K_1,J(\kappa)$ are given in Assumption {\bf(A)}; and
$$a=\frac{4K_1c}{J(\kappa)}+\kappa^2c^2e^{-cl_0},\quad \varepsilon=\frac{1}{16 C} J(\kappa)\kappa^2 c^2 e^{-cl_0}.$$
Note that $\sup_{(x,y)\in S_0}|x-y| < \infty$
 and so $l_0<\infty$,
 since $V(x) \to \infty$ as $|x| \to \infty$ by
 Assumption {\bf(A)}(iii).

According to
the definition of $\Phi_n(x,y)$, we know that for any $n\ge1$ and $x,y\in \R^d$ with $|x-y|\ge 1/n$,
\begin{equation}\label{e:llkkss}\widetilde L \Phi_n(x,y)
\le  \widetilde L \psi_n(|x-y|)+\widetilde  L (\varepsilon(V(x)+V(y))),\end{equation} where
$$\psi_n(r)\begin{cases}=\psi(r),&\quad 0\le r\le 1/(n+1),\\
\le a+\psi(r),&\quad 1/(n+1)\le r\le 1/n,\\
= a+\psi(r),&\quad r\ge 1/n. \end{cases}$$

(ii)
For any $n\ge 1$ and $r\in [1/n,\infty)$, we have $\psi_n(r)=a+\psi(r)$ and $\psi_n'(r)=\psi'(r)$. Therefore, for any $\kappa \in (0,\kappa_0]$ and $r>0$,
\begin{align*} \psi_n(r-\kappa\wedge r)&=\psi_n(r-\kappa \wedge r)\I_{\{r>\kappa\}}\le (a+\psi(r-\kappa\wedge r))\I_{\{r>\kappa\}}\\
&=(a+\psi(r-\kappa\wedge r))- (a+\psi(r-\kappa\wedge r))\I_{\{r\le \kappa\}}\\
&= (a+\psi(r-\kappa\wedge r))- a\I_{\{r\le \kappa\}}.\end{align*}
Recall that, under Assumption {\bf(A)}(ii), for any $0<\kappa\le \kappa_0\le 1$, we have $J(\kappa)=\inf_{0<s\le
\kappa} J(s)>0$.
For
any $r\in (1/n,l_0]$,
\begin{equation}\label{e:ste1}\begin{split}&\frac{1}{2}J(\kappa \wedge r)\left[\psi_n(r+r\wedge \kappa)+\psi_n(r-r\wedge \kappa)-2\psi_n(r)\right]\\
&\le \frac{1}{2}J(\kappa \wedge r)\left[\psi(r+r\wedge \kappa)+\psi(r-r\wedge \kappa)-2\psi(r)\right]-\frac{a}{2}J(\kappa \wedge r)\I_{\{r\le \kappa\wedge l_0\}}.\end{split}\end{equation}
On the other hand, we can check that the function $\psi$ above satisfies all the conditions in both statements of Lemma \ref{lem-test-funct}.
Hence, according to \eqref{proofth2544}, \eqref{e:ste1} and Assumption {\bf(A)}(i),  for any $x,y\in \R^d$ with $1/n\le |x-y|\le \kappa,$
\begin{align*}\widetilde L \psi_n(|x-y|)\le &\frac{1}{2}J(|x-y|)\left[\psi_n(2|x-y|)-2\psi_n(|x-y|)\right]+K_1\psi_n'(|x-y|)\\
 \le& -\frac{a}{2} J(\kappa)+  \frac{1}{2}J(\kappa)|x-y|^2\psi''(|x-y|)+ K_1\psi_n'(|x-y|)\\
 \le& -\frac{a}{2} J(\kappa)+K_1c\le -\frac{a}{4} J(\kappa);\end{align*} for any $x,y\in \R^d$ with $\kappa<|x-y|\le l_0$,
\begin{align*}\widetilde L \psi_n(|x-y|)\le & \frac{1}{2}J(\kappa)\kappa^2\psi''(|x-y|)+K_1l_0\psi'(|x-y|)\\
\le& -\frac{1}{4}J(\kappa)\kappa^2 c^2 e^{-c|x-y|}\le -\frac{1}{4}J(\kappa)\kappa^2 c^2 e^{-cl_0};\end{align*}
and for any $x,y\in \R^d$ with $|x-y|> l_0$,
\begin{equation}\label{l:kk3}\begin{split}\widetilde L \psi_n(|x-y|)\le & K_1|x-y|\psi'(|x-y|)\\
 \le& K_1\max\Big\{\sup_{l_0\le r\le 2l_0} r\psi'(r), \psi'(2l_0)\sup_{r>2l_0}  r(1+r-2l_0)^{-2}\Big\}\\
=&K_1 l_0\psi'(l_0)\le\frac{1}{4}J(\kappa)\kappa^2 c^2 e^{-cl_0}.\end{split}\end{equation} Putting all the estimates together, we find that
\begin{align*}\widetilde L \psi_n(|x-y|)\le \begin{cases}-\frac{a}{4} J(\kappa),\quad &1/n\le |x-y|\le \kappa,\\
-\frac{1}{4}J(\kappa)\kappa^2 c^2 e^{-cl_0},\quad &\kappa<|x-y|\le l_0,\\
\frac{1}{4}J(\kappa)\kappa^2 c^2 e^{-cl_0},\quad &|x-y|>l_0,\end{cases} \end{align*}
On the other hand, by \eqref{e:ly} in Assumption {\bf(A)}(iii)
and the definition of the coupling operator, for all $x,y\in \R^d$,
\begin{align*}\widetilde L [\varepsilon (V(x)+V(y))]
=\varepsilon (LV(x)+LV(y))\le \begin{cases} 2\varepsilon C,\quad & (x,y)\in S_0,\\
-\frac{\varepsilon \lambda}{2}(V(x)+V(y)),\quad &
(x,y)\notin S_0.\end{cases}  \end{align*}
According to the choice of $\varepsilon$ and \eqref{e:llkkss}, we
conclude that
for any $n\ge1$ and $x,y\in \R^d$ with $|x-y|\ge 1/n$,
\begin{align*} \widetilde L
\Phi_n(x,y) \le & \widetilde L\psi_n(|x-y|)+\widetilde L(\varepsilon (V(x)+V(y)))\\
\le&\begin{cases}-\frac{1}{8}J(\kappa)\kappa^2 c^2 e^{-cl_0},\quad & (x,y)\in S_0,\\
-\frac{\varepsilon \lambda}{4}(V(x)+V(y)),\quad &
(x,y)\notin S_0\end{cases}\\
\le&-\lambda_0 [\psi_n(|x-y|)+\varepsilon (V(x)+V(y))]\\
=&-\lambda_0 \Phi_n(x,y),\end{align*}where $$\lambda_0=\min\left\{ \frac{\lambda J(\kappa)\kappa^2c^2 e^{-cl_0}}{8(\lambda+a\lambda+16C\varepsilon)}, \frac{4\varepsilon\lambda C}{16C\varepsilon+\lambda(1+a+ce^{-2cl_0})}\right\}.$$
This, along with Proposition \ref{p-w} and the fact that for all $n\ge1$ and $x,y\in \R^d$,
$$ \varepsilon (V(x)+V(y))\le  \psi_n(|x-y|)+\varepsilon (V(x)+V(y))\le [(1+a + ce^{-2cl_0})+\varepsilon] (V(x)+V(y)),$$ where we used $V(x) \geq 1$ for all $x\in \R^d$ and $\psi(r) \leq 1 + ce^{-2cl_0}$ for all $r \geq 0$, proves the desired assertion.
\end{proof}

\begin{remark}\label{R:2.6} Motivated by \cite[Section 1.6.1]{Zi}, we can extend Theorem \ref{th1:le} by replacing the global condition, Assumption {\bf (A)}(i) on $b(x)$, by a local one. More precisely,
in the proof above, we replace $K_1$ by $K_1(l_0)$ which satisfies that for all $x,y\in \R^d$ with $|x-y|\le l_0$,
\begin{equation}\label{ppp-1}\langle b(x)-b(y),x-y\rangle\le K_1(l_0)(|x-y|^2\vee |x-y|),\end{equation} and change the argument for \eqref{l:kk3} as that for any $x,y\in \R^d$ with $|x-y|> l_0$,
\begin{align*}\widetilde L \psi_n(|x-y|)\le & |b(x)-b(y)|\psi'(|x-y|)\\
 \le&|b(x)-b(y)|\max\Big\{\sup_{l_0\le r\le 2l_0}  \psi'(r), \psi'(2l_0)\sup_{r>2l_0}  (1+r-2l_0)^{-2}\Big\}\\
 = &|b(x)-b(y)|\psi'(l_0).\end{align*} Thus, we can follow the proof of Theorem \ref{th1:le} and obtain that, if for any $(x,y)
 \notin S_0$,
\begin{equation}\label{ppp-2}V(x)+V(y)\ge \frac{64C}{\lambda (J(\kappa)\kappa^2+4K_1(l_0)l_0)}|b(x)-b(y)|,\end{equation} then, there exists $\lambda_0>0$ (which is different from that in the proof of Theorem \ref{th1:le}) such that for any $n\ge1$ and $x,y\in \R^d$ with $|x-y|\ge 1/n$,
 $$\widetilde L \Phi_n(x,y)\le - \lambda_0 \Phi_n(x,y).$$ In particular, the conclusion of Theorem \ref{th1:le} still holds true under Assumption {\bf(A)} (ii)-(iii), \eqref{ppp-1} and \eqref{ppp-2}. \end{remark}

Finally, we present
the proof of Theorem \ref{mainSDEresult}.

\begin{proof}[Proof of Theorem $\ref{mainSDEresult}$]
It is easy to see that condition (\ref{drift3}) implies Assumption {\bf(A)}(i). Moreover, it can be shown that condition (\ref{drift1}),
 along with the assumption that $\int_{\{|z|\ge1\}}|z|\,\nu(dz)<\infty$,
 implies Assumption {\bf(A)}(iii) with a Lyapunov function $V$ such that $V(x) \geq 1$ for all $x \in \R^d$ and $V(x) = |x| + 1$ for $|x| \geq 1$, cf.\ Lemma \ref{lem:var2} and Remark \ref{remarkLyapunovForStandardSDE}. In particular, $V(x) \geq |x|$ for all $x \in \R^d$ and hence the weighted total variation distance $\| \cdot \|_{\operatorname{Var},V}$ from Corollary \ref{co1:le} dominates both the standard total variation and $L^1$-Wasserstein distances (see e.g. \cite[Theorem 6.15]{Vil} and \cite[Remark 2.3]{EGZ}), which finishes the proof.
\end{proof}

\subsection{Exponential ergodicity for the SDE \eqref{s1}: multiplicative distance}\label{Sec2.3} In this subsection,
we present an alternative way of studying exponential ergodicity for SDEs given by (\ref{s1}), via multiplicative Wasserstein pseudo-distances. The techniques that we will introduce here will also play a crucial role in Section \ref{section3} for studying exponential ergodicity of McKean-Vlasov equations and proving Theorem \ref{Main-1add}. We remark that for McKean-Vlasov SDEs we were not able to prove an analogue of Theorem \ref{th1:le} from the previous section, but only an analogue of a weaker Theorem \ref{th2:le} presented below.

We will work under the following set of conditions.

{\noindent{\bf Assumption (B)}\it
\begin{itemize}
\item[(i)] There is a constant $K_1>0$ such that for all $x,y\in
\R^d$,
$$\langle b(x)-b(y),x-y\rangle\le K_1|x-y|^2.$$
\item[(ii)] For any $\theta>0$, there exists a measure $0<\nu_\theta\le \nu$ such that ${\rm {supp}}\,\nu_\theta \subset B(0,1)$, $\int_{\{|z|\le 1\}} |z|\,\nu_\theta(dz)\le\theta,$ and
  \begin{equation}\label{th13}
  \lim_{r\to 0}\inf_{s\in (0,r]} J_{\nu_\theta}(s)s^\alpha >0,
  \end{equation} where $\alpha:=\alpha(\theta)\in (0,1)$ and $$J_{\nu_\theta}(s):= \inf_{x\in \R^d: |x|\le s} \big[\nu_\theta\wedge (\delta_x\ast \nu_\theta)\big]( \R^d)>0.$$
\item[(iii)] There exists a $C^2$-function $V:\R^d\to [1,\infty)$ such that the Lyapunov condition \eqref{e:ly} holds and
\begin{equation}\label{mmkk}\sup_{z\in B(x,2)}|\nabla V(z)|\le C_0 V(x),\quad x\in \R^d,\end{equation} where $C_0>0$ is a constant independent of $x\in \R^d$.
\end{itemize}}

Assumption \eqref{mmkk} is
a
growth condition on the Lyapunov function $V$.
It is satisfied, e.g., if $1\le V\in C^2(\R^d)$ and $V(x)=|x|^\alpha$ with $\alpha>0$ or $V(x)=\exp(|x|^\beta)$ with $\beta\in (0,1]$ for large $|x|$. We note that, in the context of
related results for
diffusions, similar conditions (see \cite[Assumptions 2.4 and 2.5]{EGZ}) are also imposed.
Similarly as in Subsection \ref{subsection22}, we remark that under
additional assumptions that $\int_{\{ |z| > 1 \}} |z|\, \nu(dz) < \infty$ and that there are constants
$c_0, l_0>0$ such that $\langle b(x) , x \rangle \leq -
c_0|x|^2$ for all $x \in \R^d$ with
 $|x|\ge l_0$. Assumption {\bf(B)} (iii) is satisfied with a radial function $V \in C^2(\R^d)$ such that
  $V(x)\ge1$ for all $x\in \R^d$ and
 $V(x)=1+|x|$ for all $x\in \R^d$ with $|x|\ge 1$; see Lemma \ref{lem:var2} and Remark \ref{remarkLyapunovForStandardSDE}.

\begin{theorem}\label{th2:le} Under Assumption {\bf (B)},
there are constants $\lambda_0$ and $C_0 > 0$ such that for all $x,y\in \R^d$ and $t>0$,
$$W_{\Phi}(P_t(x,\cdot),P_t(y,\cdot))\le C_0 e^{-\lambda_0t} \Phi(x,y),$$ where
$\Phi(x,y)=(|x-y|\wedge1)(V(x)+V(y)).$ \end{theorem}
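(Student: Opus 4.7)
The plan is to apply Proposition~\ref{p-w} with the constant-in-$n$ choice $\Phi_n(x,y) = \Phi(x,y) := \psi(|x-y|)\bigl(1 + \varepsilon(V(x)+V(y))\bigr)$, where $\psi \in C^2([0,\infty))$ is a bounded, strictly increasing, concave function with $\psi(0) = 0$ and $\psi(r) \asymp r$ near $0$, chosen so that the hypotheses of Lemma~\ref{lem-test-funct}(ii) are met, and $\varepsilon > 0$ is a small constant to be fixed. Since $\Phi(x,y) \asymp (|x-y|\wedge 1)(V(x)+V(y))$, the target estimate of the theorem is exactly the output of Proposition~\ref{p-w}. For the coupling operator I would use the refined basic coupling \eqref{e:couplg}, but with $\nu$ replaced by the component $\nu_\theta \le \nu$ supplied by Assumption~$(\mathbf{B})$(ii) (using synchronous coupling for the residual $\nu - \nu_\theta$); this remains a valid coupling, and $\theta > 0$ is a parameter to be taken small at the end.

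The central step is a product-rule style decomposition
\begin{equation*}
\widetilde L\Phi(x,y) = \bigl(1 + \varepsilon(V(x)+V(y))\bigr)\,\widetilde L\psi(|x-y|) + \varepsilon\,\psi(|x-y|)\,\widetilde L\bigl(V(x) + V(y)\bigr) + \Gamma(x,y),
\end{equation*}
where $\Gamma$ is the carr\'e-du-champ-type cross term produced by the simultaneous jumps of $\psi(|x-y|)$ and $V(x)+V(y)$. The Lyapunov factor is controlled directly by $(\mathbf{B})$(iii): because the coupling marginals solve \eqref{s1}, $\widetilde L(V(x)+V(y)) = LV(x)+LV(y) \le 2C - \lambda(V(x)+V(y))$. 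For $\widetilde L\psi(|x-y|)$ I would use \eqref{proofth2544} together with $(\mathbf{B})$(i): on $\{|x-y| \le \kappa\}$ one obtains $\widetilde L\psi(r) \le \tfrac12 J_{\nu_\theta}(r)(\psi(2r) - 2\psi(r)) + K_1 r\psi'(r)$, and taking $\psi(r) \asymp r^\beta$ near $0$ for some $\beta \in (0,1)$, the concentration hypothesis $J_{\nu_\theta}(s)s^\alpha \ge c_0$ delivers a strong negative contribution of order $-r^{\beta-\alpha}$ that dominates the drift term $K_1 r\psi'(r) \asymp r^{\beta}$ as $r \to 0$; on $\{|x-y| > \kappa\}$, Lemma~\ref{lem-test-funct}(ii) and the boundedness of $\psi$, $\psi'$ give only a moderate bound on $\widetilde L\psi$, and I would rely on the Lyapunov term $-\varepsilon\lambda\psi(|x-y|)(V(x)+V(y))$ to carry the decay in that regime.

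The main technical obstacle is the cross term $\Gamma(x,y)$: each coupling jump simultaneously changes $\psi(|x-y|)$ (by an amount controlled by $\psi'$ and the coupling shift $(x-y)_\kappa$, of size $\le \kappa$) and changes $V(x)+V(y)$ by $V(x+z)-V(x)$ and $V(y + z + \text{shift})-V(y)$. Since $\supp \nu_\theta \subset B(0,1)$ and the shifts are bounded by $\kappa \le 1$, all jumps land inside $B(x,2)\cup B(y,2)$, so the local growth condition \eqref{mmkk} yields $|V(x+z)-V(x)| \le C_0 V(x)|z|$ on this region; combined with $\int |z|\,\nu_\theta(dz) \le \theta$, this should produce an estimate of the form $|\Gamma(x,y)| \le C_1\theta\,\psi(|x-y|)(V(x)+V(y))$ with $C_1$ independent of $\theta$. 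Choosing $\theta$ sufficiently small then absorbs $\Gamma$ into the negative Lyapunov contribution, and a subsequent small choice of $\varepsilon$ balances the two regimes, giving $\widetilde L\Phi \le -\lambda_0\Phi$ on $\{x\ne y\}$, whence Proposition~\ref{p-w} closes the proof. The hardest part I expect is the joint design of $\psi$: it must satisfy the fourth-order sign conditions of Lemma~\ref{lem-test-funct}(ii), behave like $r^\beta$ with $\beta \in (0,1)$ near $0$ so that the $J_{\nu_\theta}(r)s^\alpha$ hypothesis beats the $K_1 r\psi'(r)$ drift, and be bounded (and smoothly truncated above $\kappa$) so that the cross-term and drift estimates stay uniform on all of $\R^{2d}$.
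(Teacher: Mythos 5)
Your overall strategy matches the paper's: replace $\nu$ by its small-first-moment component $\nu_\theta$ in the refined basic coupling, decompose $\widetilde L\Phi$ into the $\psi$-part, the $V$-part and a jump cross-term, control the cross-term via $\int |z|\,\nu_\theta(dz)\le\theta$ and \eqref{mmkk}, and close by choosing $\theta$ and then $\varepsilon$ small. The gap is in the construction of $\psi$, and it is a genuine one, not merely a technicality.

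You require $\psi(r)\asymp r$ near zero (correctly, since the target $\Phi(x,y)=(|x-y|\wedge 1)(V(x)+V(y))$ forces it), but then derive coercivity by taking $\psi(r)\asymp r^\beta$ with $\beta\in(0,1)$. These are incompatible: the resulting $\Phi_\beta(x,y)\asymp |x-y|^\beta(V(x)+V(y))$ is not $O(\Phi(x,y))$ as $|x-y|\to 0$, so Proposition \ref{p-w} applied with that $\psi$ yields a strictly weaker statement than the theorem. On the other hand, a plain linear $\psi$ (with $\psi''$ bounded near zero) does not work either: the favorable jump term $\tfrac12 J_{\nu_\theta}(r)r^2\psi''(r)$ is of order $-r^{2-\alpha}$, which vanishes faster than the adverse drift $K_1 r\psi'(r)\asymp r$, so the jump contribution does not dominate. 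The paper resolves this tension with $\psi(r)=c_1 r+\int_0^r e^{-g(s)}\,ds$, $g(r)=C_*(2K_1+1)l_0^{2-\alpha}r^\alpha$: then $\psi(r)\asymp r$ near zero (recovering the target metric) while $\psi''(r)=-g'(r)e^{-g(r)}\asymp -r^{\alpha-1}$ is singular, and the tuning $g'(r)=(2K_1+1)/\sigma(r)$ together with \eqref{l:kk2} gives the exact cancellation $-\tfrac12 J(r\wedge\kappa)(r\wedge\kappa)^2 g'(r)e^{-g(r)}+K_1 r\psi'(r)\le -re^{-g(r)}$, of order $-r$. This simultaneous requirement (linear growth of $\psi$ with a prescribed blow-up of $\psi''$) is the piece your proposal is missing, and without it the argument does not close. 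A secondary imprecision: the split by $\{|x-y|\le\kappa\}$ versus $\{|x-y|>\kappa\}$ is insufficient, since $LV(x)+LV(y)$ need not be negative on the set $S_0$, which may intersect $\{|x-y|>\kappa\}$; the paper instead partitions at the radius $l_0$ (chosen large enough that $S_0\subset\{|x-y|\le l_0\}$) and adjusts the constants $L_1$, $L_2$ in the definitions of $l_0$, $S_0$ so that in the large-separation regime the drift term is dominated by the Lyapunov decay.
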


The use of
the multiplicative distance $W_\Phi$ is inspired by the weak Harris theorem
introduced in \cite{HMS}, see also \cite[Section 2.2]{EGZ}. As mentioned in \cite{HMS}, the distance
of multiplicative form is more applicable for SDEs with degenerate
noises or infinite dimensional SDEs, where convergence in terms of
the total variation norm (and so the additive metric in the previous
subsection) does not hold. We note that the multiplicative distance
$W_\Phi$ indeed is only a multiplicative semimetric, since the
triangle inequality may be not true. See \cite[Section 4]{HMS} for
more details.
As shown in the proof of Theorem \ref{th2:le} below, we take the reference function corresponding to the multiplicative distance $W_\Phi$ of the form
$\psi(|x-y|)(1+\varepsilon(V(x)+V(y)))$, where $\psi(|x-y|)$ is a bounded concave function and is comparable to $|x-y|$ for all $x,y\in \R^d$ with $|x-y|\le 1$.
Note also that, since the function $\Phi(x,y)$ in Theorem \ref{th2:le} satisfies that $\Phi(x,y)\to0$ as $x\to y$, we can get from Theorem \ref{th2:le} that the associated semigroup of the process $(X_t)_{t\ge0}$ is Feller, i.e., for any $t>0$ and $f\in C_b(\R^d)$, $P_tf\in C_b(\R^d)$; see the proof of \cite[Proposition 1.5]{Lwcw}. However, such
an
assertion can not been deduced from Theorem \ref{th1:le}.

Similar to Corollary \ref{co1:le}, we have the following statement.

\begin{corollary}\label{co2:le}Suppose that $\int_{\{|z|\ge1\}}|z|\,\nu(dz)<\infty$ and Assumption {\bf(B)}{\rm(iii)} is satisfied for $V$ with $V(x)\ge c_0|x|$ for all $|x|$ large enough and some constant $c_0\in (0,1)$. Under Assumption {\bf (B)} {\rm(i)} and {\rm(ii)}, the process $(X_t)_{t\ge0}$ is
exponentially ergodic in terms of $W_1$-distance; more explicitly,
there are a unique invariant probability measure $\mu$ with finite
first moment, a constant $\lambda_0$ and a measurable function
$C_0(x)$ such that for all $x\in \R^d$ and $t>0$,
$$W_1(P_t(x,\cdot),\mu)\le C_0(x) e^{-\lambda_0t}.$$\end{corollary}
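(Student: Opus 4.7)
The plan is to deduce Corollary \ref{co2:le} from Theorem \ref{th2:le} by comparing the multiplicative pseudo-distance $W_\Phi$ with the standard $L^1$-Wasserstein distance $W_1$. The first and only genuinely new step is to establish the uniform bound
$$\Phi(x,y)\ge c_1 |x-y|,\quad x,y\in\R^d,$$
for some $c_1>0$. Since $V\ge 1$ everywhere and $V(x)\ge c_0 |x|$ for $|x|$ large enough, the local boundedness of $V$ yields $V(x)\ge c'|x|$ globally for some $c'>0$, hence $V(x)+V(y)\ge c'|x-y|$. For $|x-y|\le 1$ the bound $V\ge 1$ gives $\Phi(x,y)\ge 2|x-y|$, while for $|x-y|>1$ the previous estimate yields $\Phi(x,y)=V(x)+V(y)\ge c'|x-y|$. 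Taking $c_1=\min(2,c')$ proves the claim, and in particular $W_1(\nu_1,\nu_2)\le c_1^{-1}W_\Phi(\nu_1,\nu_2)$ for any probability measures $\nu_1$, $\nu_2$ on $\R^d$.

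Next, I would construct an invariant probability measure $\mu$ with finite first moment from the Lyapunov condition \eqref{e:ly}. A standard Dynkin-plus-Gronwall argument applied to \eqref{e:ly} yields $\sup_{t\ge 0}\Ee^x V(X_t)\le V(x)+C/\lambda$ for every $x\in\R^d$, so $\{P_t(x,\cdot)\}_{t\ge 0}$ is tight since $V\to\infty$ at infinity. The Krylov--Bogoliubov procedure applied to the time averages $\frac{1}{T}\int_0^T P_t(x,\cdot)\,dt$ then produces an invariant probability measure $\mu$, and Fatou's lemma yields $\int V\,d\mu\le V(x)+C/\lambda<\infty$; combined with $V(z)\ge c'|z|$, this shows that $\mu$ has finite first moment.

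For the exponential convergence, fix $x\in\R^d$. By invariance of $\mu$, transporting the coupling $\delta_x\otimes\mu$ through $P_t$ yields a coupling of $P_t(x,\cdot)$ and $\mu$; combining this with the comparison $W_1\le c_1^{-1}W_\Phi$ and Theorem \ref{th2:le} gives
$$W_1(P_t(x,\cdot),\mu)\le \int W_1(P_t(x,\cdot),P_t(y,\cdot))\,\mu(dy)\le c_1^{-1}C_0\, e^{-\lambda_0 t}\int \Phi(x,y)\,\mu(dy),$$
where the last integral is dominated by $V(x)+\int V\,d\mu<\infty$. This yields the advertised bound with $C_0(x):=c_1^{-1}C_0\bigl(V(x)+\int V\,d\mu\bigr)$. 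Uniqueness among invariant probability measures $\mu'$ satisfying $\int V\,d\mu'<\infty$ follows by applying the same contraction to $W_1(\mu,\mu')=W_1(\mu P_t,\mu' P_t)$, which tends to $0$ as $t\to\infty$; that every invariant probability measure with finite first moment automatically has finite $V$-integral can be verified by the standard truncation trick for Lyapunov functions. I expect the main obstacle to be precisely the uniform comparison $\Phi\ge c_1|x-y|$; once that is in place, the remaining steps are routine applications of the Lyapunov and Wasserstein-contraction machinery already developed in the paper.
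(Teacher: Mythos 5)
Your proposal is correct and follows essentially the same route as the paper's own (very terse) proof: establish the comparison $c_1 W_1 \le W_\Phi$ from the lower bound $V(x)\ge c_0|x|$ and $V\ge 1$, invoke the contraction from Theorem \ref{th2:le} together with the Lyapunov bound $\sup_{t\ge0}\Ee^x V(X_t)<\infty$ to get existence and uniqueness of the invariant measure with finite first moment, and then feed the invariance back through the coupling to obtain the exponential rate. The only cosmetic difference is that you produce the invariant measure via Krylov--Bogoliubov plus tightness, while the paper references the standard Cauchy-sequence/fixed-point argument from \cite{Maj}; both are routine once the contraction and the Lyapunov moment bound are in place, and your remaining claims (including $\pi(V)<\infty$ for every invariant $\pi$ by truncating $V\wedge n$ and letting $t\to\infty$) are easily justified.
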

\begin{proof} Note that here, unlike in Corollary \ref{co1:le}, $W_\Phi$ is only a semimetric, as mentioned above. First,
it can be verified that the condition $\int_{\{|z|\ge1\}}|z|\,\nu(dz)<\infty$
along with Assumption {\bf(B)}(i) yields that $\Ee^x|X_t|<\infty$
for all $x\in \R^d$ and $t>0$. Hence, by Theorem
\ref{th2:le} and the fact that
$c_1W_1(\mu_1,\mu_2)\le W_\Phi(\mu_1,\mu_2)$ (which is implied by $c_1|x-y|\le \Phi(x,y)$ for all $x,y\in \R^d$, due to the definition of $\Phi(x,y)$ and our assumption that $V(x)\ge c_0|x|$ for all $|x|$ large enough and some constant $c_0\in (0,1))$ one can obtain the
existence of a unique invariant probability measure. With aid of this point, we can follow the argument of
Corollary \ref{co1:le} to prove the desired assertion.
\end{proof}

We now pass to the proof of Theorem \ref{th2:le}. We begin with the
following lemma.
\begin{lemma}\label{L:kk1}
Let $g\in C([0,2l_0])\cap C^3((0,2l_0])$ be such that
$g'(r)\geq 0,$ $ g''(r)\leq 0$ and $ g'''(r)\geq 0$ for any $r\in (0,2l_0].$
Then for all $c_1>0$ the function
  $$
  \psi(r):=\psi_{c_1}(r)= \begin{cases}
  c_1 r+ \int_0^r e^{-g(s)}\, ds ,& r\in [0, 2l_0],\\
 \psi(2l_0)+ \psi'(2l_0) \frac{r-2l_0}{1+r-2l_0}, & r\in(2l_0,\infty)
  \end{cases}
 $$
satisfies
\begin{itemize}
\item[
(i)] $\psi\in C^1([0,\infty))$ and $c_1r\le \psi(r)\le (c_1+1)r$ on $[0,2l_0]$;
\item[
(ii)]$\psi'> 0$, $\psi''\leq 0,$  $\psi'''\geq 0$ and $\psi^{(4)}\leq 0$ on $(0,2l_0]$;
\item[
(iii)] for any $0\le \delta\le r$, $$ \psi(r+\delta)+\psi(r-\delta)-2\psi(r)\leq 0;$$
\item[
(iv)] for any  $0\leq \delta \leq r\leq l_0$,
  $$\psi(r+\delta) +\psi(r-\delta)-2\psi(r)\leq \psi''(r)\delta^2.$$
\end{itemize}
\end{lemma}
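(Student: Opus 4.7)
The plan is to verify the four items in order by direct computation on the first piece $r\in(0,2l_0]$, then to handle the matching at $r=2l_0$, and finally to invoke the preceding Lemma \ref{lem-test-funct} to transfer the pointwise derivative inequalities into the finite-difference bounds (iii) and (iv). The derivative signs are the algebraic heart of the argument; once they are in place, (iii) and (iv) are essentially corollaries.

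For (i), I would first check $C^1$ smoothness at the gluing point $r=2l_0$. By construction the two pieces agree in value, and differentiating the rational piece gives $\psi'(r)=\psi'(2l_0)\,(1+r-2l_0)^{-2}$ for $r>2l_0$, which tends to $\psi'(2l_0)$ as $r\downarrow 2l_0$, so $\psi\in C^1([0,\infty))$. The bound $c_1 r\le \psi(r)\le (c_1+1)r$ on $[0,2l_0]$ reduces to $0\le\int_0^r e^{-g(s)}\,ds\le r$, which follows from $g\ge 0$ on $[0,2l_0]$ (a consequence of $g'\ge 0$ together with a non-negative starting value, as will be the case in all applications of the lemma).

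The main computational step is (ii). Direct differentiation on $(0,2l_0]$ yields
\begin{align*}
\psi'(r) &= c_1+e^{-g(r)}, \\
\psi''(r) &= -g'(r)e^{-g(r)}, \\
\psi'''(r) &= e^{-g(r)}\bigl[g'(r)^2-g''(r)\bigr], \\
\psi^{(4)}(r) &= e^{-g(r)}\bigl[-g'(r)^3+3g'(r)g''(r)-g'''(r)\bigr].
\end{align*}
Using $g'\ge 0$, $g''\le 0$, $g'''\ge 0$, each of the four quantities has the claimed sign: $\psi'>0$ and $\psi''\le 0$ are immediate, $\psi'''\ge 0$ because both $g'^2\ge 0$ and $-g''\ge 0$, and $\psi^{(4)}\le 0$ because all three summands $-g'^3$, $3g'g''$, $-g'''$ are non-positive. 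This last computation is the one place one needs to be careful and keep track of signs, though no cancellation is involved.

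Finally, (iii) and (iv) follow from Lemma \ref{lem-test-funct}. For (iii) I only need $\psi'$ to be decreasing on $[0,\infty)$: on $[0,2l_0]$ this is $\psi''\le 0$ from (ii), and on $[2l_0,\infty)$ the explicit formula $\psi'(r)=\psi'(2l_0)(1+r-2l_0)^{-2}$ is manifestly decreasing; continuity of $\psi'$ at $2l_0$ (from (i)) glues the two. So Lemma \ref{lem-test-funct}(i) applies and gives (iii). For (iv), the hypotheses of Lemma \ref{lem-test-funct}(ii) are exactly the sign conditions verified in (ii), and the restriction $r\le l_0$ guarantees $r+\delta\le 2l_0$ so we remain in the smooth first piece; Lemma \ref{lem-test-funct}(ii) then yields the quadratic upper bound in (iv). The only possible obstacle is verifying the $C^1$ glue and ensuring the sign check on $\psi^{(4)}$ is complete; both are elementary.
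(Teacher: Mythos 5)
Your proof is correct and follows the same route as the paper, which simply notes that (i) and (ii) are elementary computations and that (iii) and (iv) follow from Lemma \ref{lem-test-funct}; you fill in the derivative calculations and the $C^1$ glue at $r=2l_0$ explicitly, and all signs and hypotheses check out. Your observation that the upper bound $\psi(r)\le (c_1+1)r$ implicitly requires $g\ge 0$ on $[0,2l_0]$ is a fair and accurate point — the lemma's hypotheses do not literally force this, but it holds in every application in the paper (where $g(0)=0$ and $g'\ge 0$), so the statement is used correctly.
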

Note that the points
(i) and
(ii) are easy to check, whereas
(iii) and
(iv) follow from Lemma \ref{lem-test-funct}.

Next, we present the
proof of Theorem \ref{th2:le}.

\begin{proof}[Proof of Theorem $\ref{th2:le}$]
 Under Assumption {\bf(B)}(ii), we will apply the refined basic coupling for the component $\nu_\theta$ of the L\'evy measure $\nu$, and couple the remaining mass synchronously, where $\theta>0$ is determined later. Let $\psi$ be the function given in Lemma \ref{L:kk1},  where
$$g(r)={C_*(2K_1+1)l_0^{2-\alpha}}{r^{\alpha}},\quad l_0=\sup_{(x,y)\in S_0}|x-y|+2L_1,\quad  c_1=e^{-g(2l_0)}$$ and  $$S_0=\left\{(x,y)\in \R^{2d}: \lambda (V(x)+V(y))\le L_2 C\right\}.$$ Here, $\alpha\in (0,1)$ is given in Assumption {\bf(B)}(ii), and $C_*, L_1$ and $L_2$ are large enough (their exact values will be determined later).

By the definition of $g$, we know that $g'(r)\ge0$, $g''(r)\leq 0$ and $g'''(r)\geq 0$ for any $r\in (0,2l_0]$. In particular, we can use Lemma \ref{L:kk1}.
On the other hand, by \eqref{th13}, there are constants $\kappa:=\kappa(\theta)\in (0,\kappa_0]$ where $\kappa_0 \leq 1$ is given by \eqref{th10} and $C_*:=C_*(\kappa,\theta)>0$ (both are independent of $l_0$) such that for all $r\in (0,2l_0]$, \begin{equation}\label{l:kk2}\sigma(r):=(\alpha C_*)^{-1}l_0^{\alpha-2} r^{1-\alpha}\le \frac{1}{2r} J(\kappa\wedge r)(\kappa\wedge r)^2.\end{equation} In particular,
$g'(r)=\frac{2K_1+1}{\sigma(r)}.$

In the following, let $F(x,y)=\psi(|x-y|)(1+\varepsilon(V(x)+V(y)))$ for any $x,y\in \R^d$, where $\varepsilon>0$ is determined later. For any $x\in \R^d$, set $\mu_{\theta,x}=\nu_\theta\wedge (\delta_x\ast \nu_\theta)$. Then, by \eqref{e:couplg} and some elementary calculations, we can find that for any $x,y\in \R^d$,
\begin{equation}\label{e:estim1}\begin{split}\widetilde L & F(x,y)\\
=&\widetilde L \psi(|x-y|) \cdot (1+\varepsilon(V(x)+V(y))) + \psi(|x-y|)\cdot \varepsilon \widetilde L(V(x)+V(y))\\
&+\frac{1}{2}\varepsilon(\psi(|x-y-(x-y)_\kappa|)-\psi(|x-y|))\\
&\quad\times\int [(V(x+z)-V(x))+(V(y+z+(x-y)_\kappa)-V(y))]\,\mu_{\theta,(y-x)_\kappa}(dz)\\
&+\frac{1}{2}\varepsilon(\psi(|x-y+(x-y)_\kappa|)-\psi(|x-y|))\\
&\quad\times\int [(V(x+z)-V(x))+(V(y+z-(x-y)_\kappa)-V(y))]\,\mu_{\theta,(x-y)_\kappa}(dz).\end{split}\end{equation}

For any $x,y\in \R^d$ with $|x-y|\le l_0$, by \eqref{proofth2544}, Lemma \ref{L:kk1}(iii), \eqref{l:kk2} and \eqref{e:ly},
\begin{align*}&\widetilde L F(x,y)\\
& \le\bigg(\!\!\!-\frac{1}{2} J(|x-y|\wedge \kappa)(|x-y|\wedge\kappa)^2e^{-g(|x-y|)} g'(|x-y|) +K_1|x-y|(c_1+e^{-g(|x-y|)})\! \bigg)\\
&\qquad\times (1+\varepsilon(V(x)+V(y)))\\
&\quad -\varepsilon \lambda(V(x)+V(y))\cdot \psi(|x-y|)+2C\varepsilon \cdot \psi(|x-y|)\\
&\quad +\varepsilon \psi( |x-y|) \\
&\qquad\times\bigg(\sup_{z\in B(x,1)}\nabla V(z) \int |z|\,\nu_\theta(dz) +\sup_{z\in B(y,2)}\nabla V(z)\int |z+(x-y)_\kappa|\,\mu_{\theta,(y-x)_\kappa}(dz)\bigg)\\
&\quad +\frac{1}{2}\varepsilon(\psi(2|x-y|)+\psi(|x-y|))\\
&\qquad \times\bigg(\sup_{z\in B(x,1)}\nabla V(z) \int |z|\,\nu_\theta(dz) +\sup_{z\in B(y,2)}\nabla V(z)\int |z+(y-x)_\kappa|\,\mu_{\theta,(x-y)_\kappa}(dz)\bigg)\\
&\le \bigg(\!\!\!-\frac{1}{2} J(|x-y|\wedge \kappa)(|x-y|\wedge\kappa)^2e^{-g(|x-y|)}\, \frac{2K_1+1}{\sigma(|x-y|)} +K_1|x-y|(c_1+e^{-g(|x-y|)})\!\bigg)\\
&\qquad\times  (1+\varepsilon(V(x)+V(y)))\\
&\quad -\varepsilon \lambda(V(x)+V(y))\cdot \psi(|x-y|)+2C\varepsilon \cdot \psi(|x-y|)\\
&\quad +2C_0\varepsilon(\psi(2|x-y|)+\psi(|x-y|)) (V(x)+V(y))\int |z|\,\nu_\theta(dz)\\
 &\le -|x-y| e^{-g(|x-y|)} \cdot (1+\varepsilon(V(x)+V(y)))\\
&\quad -\varepsilon \lambda(V(x)+V(y))\cdot \psi(|x-y|)+2C\varepsilon \cdot \psi(|x-y|)\\
&\quad +C_1\theta\varepsilon \psi(|x-y|)(V(x)+V(y)),\end{align*}
where the first inequality follows from the mean value theorem, in
the second inequality we used Assumption {\bf(B)}(iii), and in the
last inequality we used the facts that $\psi(2r)\le 2\psi(r)$ for
$0<r\le l_0$, $\int_{\{|z|\le 1\}}|z|\,\nu_\theta(dz)\le \theta$ and
Assumption {\bf(B)}(iii) together with the fact that $c_1 \leq
e^{-g(|x-y|)}$ for $|x-y| \leq l_0$. Note that $C_1$ is a constant
independent of $\theta$, $\varepsilon$, $\lambda$ and $l_0$, and
that the argument for the estimates of the last two terms in
\eqref{e:estim1} works for all $x,y\in \R^d$. Now let us choose
$\theta>0$ small enough so that $C_1\theta\le \lambda/4$ and take
$\varepsilon>0$ small enough so that $2C\varepsilon \psi(r)\le r
e^{-g(r)}$ for all $0<r\le l_0$. More precisely, we can take
$\varepsilon>0$ such that
\begin{equation}\label{e:kkffllff1}2C\varepsilon= \inf_{0\le r\le
l_0}r e^{-g(r)}\psi(r)^{-1}.\end{equation} Therefore, for all
$x,y\in \R^d$ with $|x-y|\le l_0$,
\begin{equation}\label{e:proofsss}\widetilde L F(x,y)\le -(\varepsilon \lambda/2) \psi(|x-y|)(V(x)+V(y)).\end{equation}
For any $(x,y)\notin S_0$, following the argument above and \eqref{l:kk3}, we can get that
\begin{equation}\label{e:thm26aux3}
\begin{split}
\widetilde L F(x,y) \le &4K_1l_0 e^{-g(l_0)}\I_{\{|x-y|\ge l_0\}}
 \cdot (1+\varepsilon(V(x)+V(y)))\\
&-(\varepsilon \lambda/2)(V(x)+V(y))\cdot \psi(|x-y|)\\
&+C_1\theta\varepsilon \psi(|x-y|)(V(x)+V(y))\\
\le&4K_1l_0 e^{-g(l_0)}\I_{\{|x-y|\ge l_0\}}\cdot (1+\varepsilon(V(x)+V(y)))\\
&-(\varepsilon \lambda/4)(V(x)+V(y))\cdot \psi(|x-y|),
\end{split}
\end{equation} where in the first inequality we used the definition of $S_0$ and the last inequality follows from the choice of $\theta$ above. Next, we choose $L_1\ge1$ large enough so that
$$4K_1e^{-C_*(1+2K_1)r^2}\le \lambda e^{-C_*(1+2K_1)}r^{-2/\alpha}/16,\quad r\ge L_1$$ and hence, since $l_0 > L_1$, we have
$$4K_1l_0e^{-g(l_0)}\le  \lambda e^{-C_*(1+2K_1)}l_0^{(\alpha-2)/\alpha}/16.$$ In particular,
\begin{equation}\label{e:thm26aux1}4K_1l_0e^{-g(l_0)}\le  \frac{\lambda}{16}\int_0^{l_0^{-(2-\alpha)/\alpha}} e^{-C_*(2K_1+1)l_0^{2-\alpha}s^\alpha}\,ds\le \frac{\lambda}{16} \psi(l_0).\end{equation}
Furthermore, we choose
\begin{equation*}
L_2 \geq \max \left( 4 , \frac{64 K_1 l_0 e^{-g(l_0)}}{\varepsilon C \psi(l_0)} \right) \,.
\end{equation*}
Then for all $x$, $y \in \R^d$ with $|x-y| > l_0$ we have
\begin{equation}\label{e:thm26aux2}
4K_1 l_0 e^{-g(l_0)} \leq \frac{\lambda \varepsilon}{16} \psi(|x-y|) (V(x) + V(y))
\end{equation}
(note that $|x-y| > l_0$ implies $(x,y) \notin S_0$). Hence, combining \eqref{e:thm26aux1} with \eqref{e:thm26aux2} and \eqref{e:thm26aux3},
 we see that for any $(x,y)\notin S_0$,
 \begin{equation}\label{e:thm26aux4}
 \widetilde L F(x,y)\le -(\varepsilon \lambda/8)(V(x)+V(y))\cdot \psi(|x-y|).
 \end{equation} This along with \eqref{e:proofsss} shows that \eqref{e:thm26aux4} holds for all $x,y\in \R^d$,
 which proves the desired assertion by Proposition \ref{p-w}.
\end{proof}

\begin{remark}
   The results discussed in the present paper
can also be obtained by applying other coupling operators. For instance, one could apply the coupling studied in \cite{Maj,Maj2} to obtain inequalities such as \eqref{e:proposition} for SDEs driven by L\'{e}vy processes with rotationally symmetric L\'{e}vy measures
 which are not required to satisfy the concentration around zero property \eqref{assumption:concentrationLevymeasure}, cf.\
  \cite[Remark 1.6]{Maj}. This would allow us to prove e.g. an analogue of Theorem \ref{th2:le} under different (neither strictly weaker nor stronger) assumptions on the noise.
      Yet another possibility would be to use the coupling from \cite{LW14, JWang}. See \cite{LSW} for a discussion on different couplings for L\'{e}vy processes and L\'{e}vy-driven SDEs. In the present paper we choose to work with the refined basic coupling given by \eqref{e:couplg} since it
      can apply to a very large class of non-symmetric L\'{e}vy measures.
\end{remark}

\begin{remark}  We present
two further remarks
on the assumptions on the drift in
Theorems \ref{th1:le} and
\ref{th2:le}. \begin{itemize}
\item[(i)]
Assumption {\bf(B)} (i) in Theorem \ref{th2:le} is the well known one-sided Lipschitz
condition on the drift. Note that it is stronger than the assumption we needed in Theorem \ref{th1:le}. By carefully checking the proof above, we
can see that Theorem \ref{th2:le} still holds true if Assumption
{\bf(B)} (i) is weakened into the following condition: {\it there are constants $K_1>0$ and
$\beta\in (0,1-\alpha)$ with $\alpha\in (0,1)$ in Assumption
{\bf(B)} $(ii)$ such that for all $x,y\in \R^d$,
$$\langle b(x)-b(y),x-y\rangle\le K_1 (|x-y|^{1+\beta}\vee
|x-y|^2).$$} For the modification of the proof to adjust to this weaker
assumption, one can refer to the proof of \cite[Theorem 4.2]{Lwcw}.

\item[(ii)]
Following the argument of
\cite[Section 1.6.2]{Zi}, one can also prove
a more general version of
Theorem \ref{th2:le} by replacing
the
one-sided Lipschitz
condition on the drift in Assumption {\bf(B)} by a local
counterpart,
and by imposing some growth condition on $\psi V$
similar to
\eqref{ppp-2}. The details are omitted here.
 \end{itemize}
\end{remark}

\section{Exponential convergence for McKean-Vlasov SDEs with L\'evy noise}\label{section3}
In this section, we are concerned with the McKean-Vlasov (distribution dependent) SDE with
jumps given by \eqref{non-1}. Let $\mathscr{P}_1(\R^d)$ be the set of probability measures on
$\R^d$ with finite first moment. Throughout this section, we always assume that
the following conditions hold.

{\noindent{\bf Assumption (H)}\it
\begin{itemize}
\item[(i)]
The
drift term $b(x,\mu)$ is continuous on $\R^d \times \mathscr{P}_1(\R^d)$, and that it satisfies the following one-sided Lipschitz condition:
\begin{equation}\label{e:assdrift}\frac{\langle b(x_1,\mu_1)-b(x_2,\mu_2),x_1-x_2\rangle}{|x_1-x_2|}
\le K(|x_1-x_2|+W_{1}(\mu_1,\mu_2))\end{equation} for all $x_1,x_2\in \R^d$
with $x_1\neq x_2$ and all
$\mu_1,\mu_2\in \mathscr{P}_1(\R^d)$
for some constant $K>0$,
as well as the following growth condition: there is a constant $C_1>0$ such that for all $\mu \in \mathscr{P}_1(\R^d)$,
\begin{equation}\label{e:assdrift00}|b(0,\mu)|\le C_1\left(1+\int_{\R^d} |z|\,\mu(dz)\right).\end{equation}
\item[(ii)] The L\'evy measure $\nu$ of the L\'evy process $Z:=(Z_t)_{t\ge0}$ satisfies
$$\int_{\{|z|\ge1\}}|z|\,\nu(dz)<\infty.$$
\end{itemize}}

We start with the following statement.

\begin{proposition}\label{P:exist--}Under Assumption {\bf (H)}, the McKean-Vlasov SDE $\eqref{non-1}$
has a unique non-explosive strong solution $(X_t)_{t\ge0}$ such that
$\Ee |X_t|<\infty$ for all $t>0$.   \end{proposition}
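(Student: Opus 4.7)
The plan is to argue by a Picard-type fixed point scheme in the space of measure flows, following the standard strategy of \cite[Theorem 2.1]{Wang}, \cite[Theorem 2.1]{Gra} and \cite[Proposition 1.2]{JMW}. Fix $T>0$ and let $\mathcal{M}_T$ be the complete metric space of continuous curves $\mu=(\mu_t)_{t\in[0,T]}$ in $\mathscr{P}_1(\R^d)$ with $\mu_0$ prescribed and $\sup_{t\in[0,T]}\int|z|\,\mu_t(dz)<\infty$, endowed with the metric $d_\lambda(\mu,\mu')=\sup_{t\in[0,T]}e^{-\lambda t}W_1(\mu_t,\mu'_t)$ for $\lambda>0$ large.

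First, for any fixed $\mu\in\mathcal{M}_T$, consider the \emph{frozen} SDE
\begin{equation*}
dX_t^\mu=b(X_t^\mu,\mu_t)\,dt+dZ_t,\qquad X_0^\mu=X_0.
\end{equation*}
The drift $(t,x)\mapsto b(x,\mu_t)$ is continuous in both variables and, by \eqref{e:assdrift}, satisfies a time-uniform one-sided Lipschitz condition in $x$, so a unique non-explosive strong solution exists by \cite{GK} (see also \cite{Maj3}). To control $\Ee|X_t^\mu|$, I would apply It\^o's formula to $\sqrt{1+|X_t^\mu|^2}$ (to avoid the singularity of $|\cdot|$ at $0$), and use the one-sided Lipschitz bound together with the growth bound \eqref{e:assdrift00} applied at $(0,\delta_0)$ to get
\begin{equation*}
\Ee\sqrt{1+|X_t^\mu|^2}\le \sqrt{1+\Ee|X_0|^2}+C\int_0^t\Big(1+\Ee|X_s^\mu|+\int|z|\,\mu_s(dz)\Big)\,ds+Ct,
\end{equation*}
where Assumption \textbf{(H)}(ii) is used to handle the large-jump contribution $\int_{\{|z|>1\}}|z|\,\nu(dz)<\infty$. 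Gronwall's inequality then yields $\sup_{t\in[0,T]}\Ee|X_t^\mu|<\infty$. Defining $\Psi(\mu)_t:=\mathrm{Law}(X_t^\mu)$, this shows $\Psi$ maps $\mathcal{M}_T$ into itself.

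For the contraction, given $\mu,\mu'\in\mathcal{M}_T$, I would realize $X^\mu$ and $X^{\mu'}$ on the same probability space driven by the same $Z$ (synchronous coupling). The difference $D_t:=X_t^\mu-X_t^{\mu'}$ then satisfies the random ODE $\dot D_t=b(X_t^\mu,\mu_t)-b(X_t^{\mu'},\mu'_t)$ with $D_0=0$, so by \eqref{e:assdrift} (applied at points where $D_t\neq 0$, and noting $|D_t|$ is absolutely continuous)
\begin{equation*}
\frac{d}{dt}|D_t|\le K|D_t|+KW_1(\mu_t,\mu'_t)\qquad \text{a.e. on }\{D_t\neq 0\}.
\end{equation*}
Integrating and taking expectations gives $\Ee|D_t|\le K\int_0^t\bigl(\Ee|D_s|+W_1(\mu_s,\mu'_s)\bigr)\,ds$, and since $W_1(\Psi(\mu)_t,\Psi(\mu')_t)\le\Ee|D_t|$, one more Gronwall step produces
\begin{equation*}
W_1(\Psi(\mu)_t,\Psi(\mu')_t)\le K\int_0^t e^{K(t-s)}W_1(\mu_s,\mu'_s)\,ds,
\end{equation*}
which for $\lambda>K$ large enough makes $\Psi$ a strict contraction in $d_\lambda$. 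The Banach fixed point theorem then gives a unique element $\mu\in\mathcal{M}_T$ with $\Psi(\mu)=\mu$; the corresponding $X^\mu$ is the unique strong solution of \eqref{non-1} on $[0,T]$, and $T$ is arbitrary, so we are done.

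I expect the most delicate point to be the first-moment estimate on $\Ee|X_t^\mu|$: we only have the one-sided Lipschitz bound and the linear-growth condition \eqref{e:assdrift00} at $\mu$-varying points, so to close the Gronwall loop one must carefully combine these with $\int_{\{|z|\ge 1\}}|z|\,\nu(dz)<\infty$ and localize via stopping times $\tau_n=\inf\{t:|X_t^\mu|\ge n\}$ before passing $n\to\infty$. The remaining pieces (strong uniqueness of the frozen SDE, contraction via synchronous coupling, and the fixed-point argument) are standard once the a priori moment bound is in hand.
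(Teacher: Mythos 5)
Your proposal is correct and takes essentially the same route as the paper: the authors also invoke the Picard iteration on measure flows from \cite[Theorem 2.1]{Gra} and \cite[Theorem 2.1]{Wang} (freezing the measure, solving a linear SDE, and iterating on the marginal law), with Wang's argument used to relax Graham's two-sided Lipschitz hypothesis to the one-sided condition in Assumption {\bf (H)}; you have simply spelled out the frozen-SDE well-posedness, the first-moment Gr\"onwall bound, and the synchronous-coupling contraction that the paper leaves implicit by reference.
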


\begin{proof} This follows from (the proofs of)
\cite[Theorem 2.1]{Gra} and \cite[Theorem
2.1]{Wang}, and we sketch the
argument
here. Equation \eqref{non-1} is a special case of the McKean-Vlasov stochastic differential equation given in \cite[(2.1)]{Gra} with
$$\sigma(x,\mu)\equiv 0, \quad f(x,\mu,u)=\tilde f(x,\mu,u)=u$$ and $N(du,dt)$ being the Poisson random measure associated with the L\'evy process $(Z_t)_{t\ge0}$, with the
intensity
measure $\nu(du)$ on $(\R^d,\mathscr{B}(\R^d))$. Then, according to (the proof of) \cite[Theorem 2.1]{Gra} and Assumption {\bf(H)}, there exists a unique strong
solution for equation \eqref{non-1}. Note that the proof of \cite[Theorem 2.1]{Gra} is based on the parametrized approach by considering a sequence of SDEs $(X_t^{(n)})_{t>0}$ given by
$$dX_t^{(n)}=b(X_t^{(n)}, \mu^{(n-1)}_t)\,dt +dZ_t,\quad t\ge0, n\ge 1$$ with $X_t^{(0)}=x$ for all $t\ge0$ and $\mu^{(n-1)}_t$ being the law of $X_t^{(n-1)}$. This idea is the same as that for \cite[Theorem 2.1]{Wang}. In particular, according to the argument of
\cite[Theorem 2.1]{Wang},
the Lipschitz condition
$$|b(x_1,\mu_1)-b(x_2,\mu_2)|
\le K(|x_1-x_2|+W_{1}(\mu_1,\mu_2))$$ assumed in \cite[Theorem 2.1]{Gra} can be replaced by Assumption {\bf(H)}.
Moreover, one can show that the unique strong solution $(X_t)_{t\ge0}$ to the SDE \eqref{non-1} satisfies $\Ee |X_t|<\infty$ for all $t>0$, by following again the proof of \cite[Theorem 2.1]{Wang}. \end{proof}

The
existence and uniqueness of a strong solution
to the SDE \eqref{non-1} clearly implies the existence of a weak solution to \eqref{non-1}; see \cite[Definition 1.1]{Wang}.
Let $\mu_{X_t}$ be the distribution of the time marginal $X_t$ of the process $(X_t)_{t\ge0}$ with initial distribution $\mu_{X_0}$.
Then, it follows from \cite[Propositions 1.6 and 1.7]{JMW}
 that for any $f\in C^2_b(\R^d)$,
$$\left\{f(X_t)-f(X_0)-\int_0^t L[\mu_{X_s}]f(X_s)\,ds,t\ge0\right\}$$ is a $\Pp$-martingale, where \begin{equation}\label{e:inf1}\begin{split}L[\mu] f(x)=& \langle b(x,\mu), \nabla f(x)\rangle\\
&+\int(f(x+z)-f(x)-\langle\nabla f(x), z\rangle\I_{\{|z|\le
    1\}})\,\nu(dz).\end{split}\end{equation} That is, $L[\mu]f$ can be interpreted as the infinitesimal generator of the process $(X_t)_{t\ge0}$.

\smallskip

In the rest of this section, we will always assume that Assumption {\bf(H)} holds.

\subsection{Convergence in $W_1$: the contractivity at infinity approach}
 We say that the drift term
$b(x,\mu)$ in \eqref{non-1} satisfies ${\mathbf B(K_1, K_2,
l_0; K_3)}$, if \eqref{assumptionOneSidedLipschitzForMcKean} holds, i.e., for any $x_1,x_2\in \R^d$
with $x_1\neq x_2$ and $\mu_1, \mu_2 \in
\mathscr{P}_1(\R^d)$,
\begin{equation}\label{e:contractivityAtInfinity}
\begin{split}
\frac{\langle b(x_1,\mu_1)-b(x_2,\mu_2),x_1-x_2\rangle}{|x_1-x_2|} \le &  K_1|x_1-x_2|\I_{\{|x_1-x_2|\le l_0\}}\\
&-K_2|x_1-x_2|\I_{\{|x_1-x_2|>l_0\}}+K_3W_{1}(\mu_1,\mu_2),
\end{split}
\end{equation} where
$l_0\in[0,\infty)$, $K_1$, $K_2$ and $K_3\ge0$.

\begin{theorem}\label{thtpw}
Suppose that Assumption {\bf(H)} holds. Assume that
\begin{equation}\label{th10--}
  J(\kappa_0):=\inf_{x\in \R^d:\, |x|\le \kappa_0} \big[\nu\wedge (\delta_x\ast \nu)\big]( \R^d)>0
\end{equation} for some $0<\kappa_0\le1$,
and that the drift $b(x,\mu)$ satisfies
${\mathbf
B(K_1, K_2, l_0; K_3)}$. Suppose that there exists a nondecreasing and concave function $\sigma\in C([0,2l_0])\cap C^2((0,2l_0])$ such that  $g_1(r):=\int_{0}^r \frac{1}{\sigma(s)}\,ds$ is well
defined for all $r\in [0,2l_0]$, and for some $\kappa\in (0,\kappa_0]$,
  \begin{equation}\label{e:sigma}
  \sigma(r)\leq \frac1{2r} J(\kappa\wedge r) (\kappa\wedge r)^2, \quad r\in (0, 2l_0],
  \end{equation}
where $J(r)$ is defined by \eqref{defJ}. Let $\mu_{X_t}$ $($resp.\ $\mu_{Y_t}$$)$ be the distribution of the time marginal $X_t$ $($resp.\ $Y_t$$)$ of a solution to \eqref{non-1} with initial distribution $\mu_{X_0}$ $($resp. $\mu_{Y_0}$$)$.
Then for any $t>0$,
 $$  W_1(\mu_{X_t}, \mu_{Y_t})\le C e^{-\lambda t} W_1(\mu_{X_0}, \mu_{Y_0}),$$
  where $$\lambda=\frac{c_1c_2}{1+c_1}-\frac{(1+c_1)K_3}{2c_1},\quad C=\frac{1+c_1}{2c_1}$$ with
$c_2=(2K_2)\wedge g_1(2l_0)^{-1}$, $c_1=e^{-c_2 g(2l_0)}$ and
  $g(r)=(1+\frac{2K_1}{c_2}) g_1(r)$ for all $ r\in [0,2l_0].$

\end{theorem}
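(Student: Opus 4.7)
The plan is to lift the multiplicative-distance coupling scheme of Subsection~\ref{Sec2.3} to the McKean-Vlasov setting by using the concave test function supplied by Lemma~\ref{L:kk1}, and closing the resulting differential inequality self-consistently. The main new difficulty compared with the standard SDE case is that the refined basic coupling of two solutions started from $\mu_{X_0}$ and $\mu_{Y_0}$ involves drifts evaluated at \emph{different} measures $\mu_{X_t}$ and $\mu_{Y_t}$, so the condition $\mathbf{B(K_1,K_2,l_0;K_3)}$ produces an extra source term proportional to $K_3 W_1(\mu_{X_t},\mu_{Y_t})$ that cannot be cancelled pointwise, but only after taking expectations and exploiting the two-sided comparability of $\psi$ with the identity.

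First I would construct a Markovian coupling $(X_t,Y_t)_{t\ge 0}$ exactly as in Subsection~\ref{coupling-op} by solving the obvious analogue of \eqref{SDE-coup-eq-1} in which the drifts read $b(X_t,\mu_{X_t})$ and $b(Y_t,\mu_{Y_t})$; existence and pathwise uniqueness of such a process follow from Proposition~\ref{P:exist--} together with the interlacing argument used for the jump noise. Next I would choose
\[
\psi(r)=c_1 r+\int_0^r e^{-c_2 g(s)}\,ds\quad\text{for }r\in[0,2l_0],
\]
extended linearly past $2l_0$, with $g$, $c_2$, $c_1$ as in the theorem. The properties from Lemma~\ref{L:kk1} that matter are $\psi'(r)=c_1+e^{-c_2 g(r)}\in[2c_1,1+c_1]$ on $[0,2l_0]$ together with the concavity inequalities (iii) and (iv), and the resulting sandwich $2c_1\,r\le\psi(r)\le(1+c_1)\,r$ for all $r\ge 0$.

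The core computation is a pointwise estimate for $\widetilde L\psi(|x-y|)$ via~\eqref{proofth2544}, split into three regimes. For $|x-y|\le l_0$, Lemma~\ref{L:kk1}(iv) together with~\eqref{e:sigma} bounds the jump contribution by $-(c_2+2K_1)|x-y|\psi'(|x-y|)$; the drift contribution, bounded by~\eqref{e:contractivityAtInfinity} as $\psi'(|x-y|)\bigl(K_1|x-y|+K_3 W_1(\mu_{X_t},\mu_{Y_t})\bigr)$, combines with the jump term to leave $-(c_2+K_1)|x-y|\psi'(|x-y|)+\psi'(|x-y|)K_3W_1(\mu_{X_t},\mu_{Y_t})$. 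Using $\psi'(r)\ge 2c_1$, $|x-y|\ge\psi(|x-y|)/(1+c_1)$, and $\psi'(r)\le 1+c_1$ then gives
\[
\widetilde L\psi(|x-y|)\le -\frac{c_1 c_2}{1+c_1}\,\psi(|x-y|)+(1+c_1)K_3\,W_1(\mu_{X_t},\mu_{Y_t}).
\]
For $l_0<|x-y|\le 2l_0$, Lemma~\ref{L:kk1}(iii) makes the jump term non-positive while the drift becomes dissipative at rate $-K_2|x-y|\psi'(|x-y|)$; since $c_2\le 2K_2$ by construction, the same upper bound holds. For $|x-y|>2l_0$, $\psi$ is linear with slope $2c_1$, so the jump contribution remains non-positive and the bound follows from $2c_1 K_2\ge c_1 c_2$ and $2c_1\le 1+c_1$.

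Taking expectations in the above pointwise inequality and invoking the sandwich $W_1(\mu_{X_t},\mu_{Y_t})\le\Ee|X_t-Y_t|\le\Ee\psi(|X_t-Y_t|)/(2c_1)$ closes the Gronwall inequality with exponential rate exactly $\lambda=\frac{c_1 c_2}{1+c_1}-\frac{(1+c_1)K_3}{2c_1}$. Starting from an optimal coupling of $\mu_{X_0}$ and $\mu_{Y_0}$ so that $\Ee\psi(|X_0-Y_0|)\le(1+c_1)W_1(\mu_{X_0},\mu_{Y_0})$, propagating the bound to time $t$, and dividing by $2c_1$ delivers $W_1(\mu_{X_t},\mu_{Y_t})\le C e^{-\lambda t}W_1(\mu_{X_0},\mu_{Y_0})$ with $C=\frac{1+c_1}{2c_1}$. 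The main obstacle is precisely this closing step: the nonlocal $K_3 W_1$ source in $\widetilde L\psi$ cannot be absorbed pathwise, and only the quantitative equivalence $\psi(r)\asymp r$ controlled by $c_1$ lets one convert this source into a multiplicative factor on $\Ee\psi(|X_t-Y_t|)$; this also explains why the rate $\lambda$ is positive only when $K_3$ is sufficiently small relative to $c_1 c_2$.
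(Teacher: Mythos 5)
The analytic skeleton of your proposal---the choice of $\psi$ built from $g_1$ and $\sigma$, the three-regime pointwise estimate for $\widetilde L \psi(|x-y|)$ using the concavity inequalities of Lemma \ref{L:kk1}, the two-sided bound $2c_1 r \le \psi(r) \le (1+c_1)r$, and the Gronwall closing that converts the nonlocal $K_3 W_1$ source into a multiplicative factor---matches the computational core of the paper's proof of Theorem \ref{thtpw} and reproduces the stated constants. However, there is a genuine gap at the very first step: the coupling construction. You propose to build a Markovian coupling ``exactly as in Subsection \ref{coupling-op}'' by solving the analogue of \eqref{SDE-coup-eq-1} and claim existence via Proposition \ref{P:exist--} and the interlacing technique. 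Remark \ref{r:thtpw}(iv) explicitly flags this as non-applicable for McKean-Vlasov SDEs: the interlacing construction relies on the strong Markov property of the solution, which the nonlinear process $(X_t)_{t\ge0}$ lacks, since its generator depends on the marginal law and cannot be restarted from an arbitrary stopping time. Moreover, the coupling you describe (and the one from Subsection \ref{coupling-op}) enforces $Y_t = X_t$ for $t\ge T$; Remark \ref{r:thtpw}(iii) points out that stopping or gluing at the coupling time produces a process whose marginal law is no longer $\mu_{Y_t}$, so the coupled pair ceases to be a coupling of two solutions of \eqref{non-1}. Finally, near the diagonal the refined basic coupling has potentially unbounded jump intensities ($J(r)$ may diverge as $r\to 0$), so the It\^o martingale is a priori only local; the standard localization fix is again blocked by the McKean-Vlasov structure, since stopping changes the marginal.

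The paper's way around all three obstacles is the $\delta$-modified coupling $\widetilde L^\delta_Z$ from \eqref{truncatedCoupling}--\eqref{Vdelta}, which switches to the synchronous coupling below distance $\delta/2$. This keeps the two marginals from ever coalescing (so both remain genuine solutions of \eqref{non-1} for all time), makes the compensating jump intensity in $L^{*,\delta}_t$ bounded (so $M_t^{\psi,\delta}$ is a true martingale and expectations can be taken directly), and allows existence of a weak coupling solution to be deduced from the nonlinear martingale-problem results of \cite[Propositions 1.7 and 1.10]{JMW}. The price is an additive error of order $\lambda_0\psi(\delta) + K_1\psi'(0)(l_0\wedge\delta)$ in the Gronwall inequality, which is sent to zero at the very end. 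If you replace your interlaced, coalescing coupling with this $\delta$-construction and add the limiting step, your estimate becomes the paper's proof.
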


\begin{remark}\label{r:thtpw} We make some comments on Theorem \ref{thtpw} and its proof.
 \begin{itemize}
\item[
(i)]Theorem \ref{thtpw} extends \cite[Theorem 4.2]{Lwcw}, where the
drift term is distribution independent. As pointed out in
\cite[Remark 4.3(1)]{Lwcw}, when $b(x,\mu)=b(x)$ satisfies the
uniformly dissipative condition in the sense that there is a
constant $K_2>0$ such that for any $x,y\in \R^d$,
$$\langle b(x)-b(y),x-y\rangle\le -K_2|x-y|^2;$$ (that is, $l_0=0$ and $K_3=0$ in Theorem \ref{thtpw}),
then the constant $\lambda$ in the statement is equal to $K_2$, which is optimal.

\item[
(ii)] When $K_3$ is small enough so that $\lambda>0$, using Theorem \ref{thtpw} and the fact that
$\Ee|X_t|<\infty$ for all $t>0$, we can obtain
exponential ergodicity of $(X_t)_{t\ge0}$ in terms of
$W_1$-distance, e.g. see the proof of
\cite[Theorem 3.1(2)]{Wang}

\item[
(iii)]
    Note that for proving convergence of solutions to McKean-Vlasov SDEs we cannot use the classical argument involving a coupling $(X_t, Y_t)_{t \geq 0}$ such that $Y_t = X_t$ for all $t \geq T$, where $T$ is the coupling time, see \cite[Remarks on pages 598--599]{Wang}
or \cite[Remarks between Assumption 2.7 and Theorem 2.4]{EGZ}. Note also that for any stopping time $\tau$, if $(X_t)_{t \geq 0}$ is a solution to \eqref{non-1}, then the stopped process $(Y_t)_{t\ge0}$ defined by $Y_t := X_{t \wedge \tau}$ for all $t\ge 0$ solves
    \begin{equation*}
    Y_t = Y_0 + \int_0^{t \wedge \tau} b(Y_s, \mu_s) \,ds + Z_{t \wedge \tau},\quad t\ge0 \,,
    \end{equation*}
    which is not the same SDE since in general $\operatorname{Law}(Y_t) \neq \operatorname{Law}(X_t) = \mu_t$, see  \cite[Remarks on page 3]{HSS}. This means that applying the standard argument via Proposition \ref{p-w} cannot give us the exponential convergence for $W_{\Phi_\infty}(\mu_{X_t},\mu_{Y_t})$ as desired, see
    its proof for more details. Hence to consider convergence of McKean-Vlasov SDEs we will use a different approach, which is based on the combination of the refined basic coupling and the synchronous coupling  defined by \eqref{truncatedCoupling}.

\item[
(iv)] We further remark that the typical way of constructing couplings of L\'{e}vy-driven SDEs by using
the interlacing technique (see e.g.\ \cite[Proposition 2.2]{Lwcw} or
\cite[Section 2.4]{Maj}) is non-applicable here due to the lack of the strong Markov property of the solution $(X_t)_{t \geq 0}$, see the discussions in \cite{Gra}. Hence we apply here a different approach based on the results on non-linear martingale problems from \cite{JMW}.
\end{itemize}
\end{remark}

\begin{proof}[Proof of Theorem $\ref{thtpw}$] (i)
 For any $r>0$, define \begin{equation*}
  \psi(r)= \begin{cases}
  c_1 r+ \int_0^r e^{-c_2 g(s)}\, ds ,& r\in [0, 2l_0],\\
 \psi(2l_0)+ \psi'(2l_0)(r-2l_0), & r\in(2l_0,\infty).
  \end{cases}
  \end{equation*}
  Note that the function $\psi$ is the same as in the proof of \cite[Theorem 4.2]{Lwcw}. Similarly as there, we can show by a simple calculation that
\begin{equation}\label{e:eth1}\begin{split}
&\psi'(|x-y|) \Big[K_1|x-y|\I_{\{|x-y|\le l_0\}}-K_2|x-y|\I_{\{|x-y|\ge l_0\}} \Big]+\widetilde L_Z\psi(|x-y|) \\
& \le -\lambda_0 \psi(|x-y|),
\end{split}\end{equation}
 where $\widetilde L_Z$ is the refined basic coupling operator for the L\'evy process $Z$, and hence \begin{equation}\label{e:eth2}\begin{split}
    \widetilde L_Z\psi(|x-y|)    &=\frac{1}{2} \mu_{(x-y)_{\kappa}}(\R^d)\Big[\psi\big(|x-y|+\kappa\wedge |x-y|\big)\\
     &\qquad\qquad\qquad\quad+  \psi\big(|x-y|-\kappa\wedge |x-y|\big)- 2\psi(|x-y|)\Big]
  \end{split}
  \end{equation} with
  $\lambda_0=c_1c_2/(1+c_1).$

(ii)
As mentioned in Remark \ref{r:thtpw} (iii) and (iv), from this point onwards the proof deviates substantially from that of \cite[Theorem 4.2]{Lwcw}.
 For any $\delta>0$, consider the equation as follows
\begin{equation}\label{e:coupp}
\begin{split} \begin{cases}
  dX_t=b(X_t,\mu_{X_t})\,dt +dZ_t, & X_0 \sim \mu_{X_0},\\
  dY_t^{\delta}=b(Y_t^{\delta},\mu_{Y_t^\delta})\,dt +dZ_t+dL_t^{*,\delta}, & Y^{\delta}_0 \sim \mu_{Y_0},
  \end{cases}\end{split}\end{equation} where $(L_t^{*,\delta})_{t\ge0}$ is given by \eqref{Ldelta}. To prove the existence of a weak solution $(X_t,Y_t^{\delta})_{t\ge0}$ to the system \eqref{e:coupp}, we consider the following nonlinear operator acting on $f\in C_b^2(\R^{2d})$ (for any fixed $\mu_1$ and $\mu_2\in \mathscr{P}_1(\R^d)$),
  $$\widetilde{L}[\mu_1,\mu_2]f(x,y)=\langle b(x,\mu_1), \nabla_x f(x,y)\rangle+ \langle b(y,\mu_2), \nabla_y f(x,y)\rangle+\widetilde{L}^\delta_Z f(|x-y|),$$
    where $\widetilde{L}^\delta_Z$ is defined by \eqref{truncatedCoupling}. It is obvious that $\widetilde{L}[\mu_1,\mu_2]$ is a coupling operator for the operators $L[\mu_1]$ and $L[\mu_2]$ given by \eqref{e:inf1}.
  Due to the continuity of $b(x, \mu)$, the drift coefficient of the coupling operator $\widetilde{L}[\mu_1,\mu_2]$ is also continuous (with respect to the product metric).
   On the other hand, as shown in \cite[Proposition A.5]{Lwcw}, \eqref{th10--} implies that there is a non-negative measurable function $\rho$ on $\R^d$ such that $\nu(dz)\ge \rho(z)\,dz$ and
    $$\inf_{x\in \R^d:\, |x|\le \kappa_0}\int_{\R^d}\rho(z)\wedge\rho(x+z)\,dz>0.$$
   Moveover, by \cite[(A.3) in the proof of Proposition A.5]{Lwcw}, the function
     \begin{equation}\label{e:continuousOverlapFunction}
   x\mapsto \int_{\R^d}\rho(z)\wedge\rho(x+z)\,dz
   \end{equation}
  is continuous on $\{x\in \R^d:0<|x|\le \kappa_0\}$. Hence without loss of generality, under condition \eqref{th10--} we can consider the refined basic coupling applied only to the component $\rho(z)\,dz$ of the L\'evy measure. Continuity of \eqref{e:continuousOverlapFunction} together with the fact that $\phi_\delta \in C_b^1([0,\infty))$ yields that the coefficients of the operator $\widetilde{L}^\delta_Z$ are continuous. Besides, by \eqref{ee:ffeerr} and the definition of $\widetilde{L}^\delta_Z$, we can see that its coefficients are bounded. This follows from the fact that we only consider the refined basic coupling in the coupling operator $\widetilde{L}^\delta_Z$ when the distance of the two marginal processes is larger than $\delta/2$.
  Hence, according to \cite[Proposition 1.10]{JMW}, there exists a solution,
  belonging to the set of probability measures on $\bar \R^{2d}$ (the standard one point compactification of $\R^{2d}$),
  to the nonlinear martingale problem for the operator $\widetilde{L}[\mu_1,\mu_2]$. (Note that, since the drift coefficient of the coupling operator $\widetilde{L}[\mu_1,\mu_2]$ is not necessarily bounded, the argument of \cite[Proposition 1.10]{JMW} only guarantees the existence of a solution to the martingale problem for $\widetilde{L}[\mu_1,\mu_2]$ on $\bar \R^{2d}$.)   This further along with  \cite[Proposition 1.7]{JMW} yields the existence of a weak solution $(X_t, Y_t^\delta)_{t\ge0}$, taking values in $\bar \R^{2d}$, to the system \eqref{e:coupp}. The explosion time of the process $(X_t, Y_t^\delta)_{t\ge0}$ is
   $$e=\lim_{n\to\infty}\tau_n,\quad \tau_n=\inf\{t\ge0: |X_t|+|Y_t^\delta|\ge n\}.$$ As mentioned above, $\widetilde{L}[\mu_1,\mu_2]$ is a coupling operator for the operators $L[\mu_1]$ and $L[\mu_2]$ given by \eqref{e:inf1}, and so the marginal processes $(X_t)_{t\ge0}$ and $(Y_t^\delta)_{t\ge0}$ enjoy the same law as the solution to the SDE given by  \eqref{non-1}. Since under assumptions in the beginning of this section any (weak) solution to \eqref{non-1} is non-explosive (e.g., see \cite[Theorem 1.2]{Wang}), we have $e=\infty$.  In particular, there exists a weak solution to \eqref{e:coupp}, which indeed takes values on $\R^{2d}$ and is a coupling of the process determined  by  \eqref{non-1}.

  (iii) Recalling that $U_t^{\delta}=X_t-Y_t^{\delta}$, we have
  $$dU_t^{\delta}= (b(X_t,\mu_{X_t})-b(Y^{\delta}_t, \mu_{Y_t^{\delta}}))\,dt- dL_t^{*,\delta},$$
   and hence, arguing as in \eqref{e:coup-func}, we see that
  \begin{equation}\label{e:proof31aux1}
  d\psi(|U_t^{\delta}|)=\bigg[\frac{\psi'(|U_t^{\delta}|)}{|U_t^{\delta}|}\langle U_t^{\delta}, b(X_t,\mu_{X_t})-b(Y_t^{\delta}, \mu_{Y_t^{\delta}}) \rangle + \widetilde L^{\delta}_Z\psi(|U_t^{\delta}|)\bigg]\,dt+dM^{\psi, \delta}_t \,,
  \end{equation}
  where
  \begin{equation*}
  dM_t^{\psi,\delta} := \int_{\R^d\times [0,1]}(\psi(|U_{t-}^{\delta}-S^\delta(U_{t-}^\delta, z,u)|)-\psi(|U_{t-}^{\delta}|))\,\widetilde{N}(dt,dz,du)
  \end{equation*}
 is a martingale. To see this, note that $$\psi(|U_{t-}^{\delta}-S^\delta(U_{t-}^\delta,z,u)|)-\psi(|U_{t-}^{\delta}|) \leq \|\psi'\|_\infty |S^\delta(U_{t-}^\delta,z,u)|,$$ and observe that $|(U_{t}^{\delta})_{\kappa}| \leq \kappa$ for all $t>0$ and that for any $\delta > 0$ the measure $\phi_{\delta}(|U_{t-}^{\delta}|) \rho((U_{t}^{\delta})_{\kappa},z) \,\nu(dz)$ is finite.

 Moreover, according to \eqref{truncatedCoupling}, we have
  \begin{equation*}
 \widetilde{L}^{\delta}_Z \psi(|x-y|) = \widetilde{L}_Z \psi(|x-y|) \cdot \phi_\delta(|x-y|).
 \end{equation*}
 Hence, using the fact that $\psi'$ is decreasing and the definition of $\phi_\delta(r)$, we see that \eqref{e:eth1} implies
 \begin{equation}\label{e:estimateused}\begin{split}
 &\psi'(|x-y|) \Big[K_1|x-y|\I_{\{|x-y|\le l_0\}}-K_2|x-y|\I_{\{|x-y|\ge l_0\}} \Big]+\widetilde L_Z^{\delta}\psi(|x-y|) \\
 & \le -\lambda_0 \psi(|x-y|) + \lambda_0 \psi(|x-y|) \cdot(1-\phi_\delta(|x-y|)) \\
 &\quad + \psi'(|x-y|) \Big[K_1|x-y|\I_{\{|x-y|\le l_0\}}-K_2|x-y|\I_{\{|x-y|\ge l_0\}} \Big] \\
 &\qquad\qquad\qquad \quad\times (1-\phi_\delta(|x-y|)) \\
 &\le -\lambda_0 \psi(|x-y|) + \lambda_0 \psi(\delta) + K_1\psi'(0) (l_0 \wedge \delta).
 \end{split}\end{equation}
 Combining the inequality above with ${\mathbf B(K_1, K_2, l_0; K_3)}$ and \eqref{e:proof31aux1}, we obtain
 \begin{equation}\label{e:ppoo}\begin{split} d\psi(|U_t^{\delta}|) \le&\bigg[{\psi'(|U_t^{\delta}|)}\Big(K_1|U_t^{\delta}|\I_{\{|U_t^{\delta}|\le l_0\}} -K_2|U_t^{\delta}| \I_{\{|U_t^{\delta}|\ge l_0\}}\Big)+\widetilde L_Z^{\delta}\psi(|U_t^{\delta}|)\bigg]\,dt\\
 & +K_3\psi'(|U_t^{\delta}|) W_1(\mu_{X_t},\mu_{Y_t^{\delta}})\,dt +dM_t^{\psi,\delta}\\
 \le&-\lambda_0 \psi(|U_t^{\delta}|)\,dt + \lambda_0 \psi(\delta)\,dt + K_1\psi'(0) (l_0 \wedge \delta)\,dt \\
 &+ K_3\psi'(0) \Ee |U_t^{\delta}|\,dt+ dM_t^{\psi,\delta}\\
 \le&-\left(\lambda_0-({K_3(1+c_1)}/({2c_1}))\right)\psi(|U_t^{\delta}|)\,dt + \lambda_0 \psi(\delta)\,dt \\
 &+ K_1\psi'(0) (l_0 \wedge \delta)\,dt + K_3(1+c_1) \big((\Ee |U_t^{\delta}|)- |U_t^{\delta}|\big) \,dt+ dM_t^{\psi,\delta}\\
 =&-\lambda\psi(|U_t^{\delta}|)\,dt + \lambda_0 \psi(\delta)\,dt + K_1\psi'(0) (l_0 \wedge \delta)\,dt \\
 &+ K_3(1+c_1) \big((\Ee |U_t^{\delta}|)- |U_t^{\delta}|\big) \,dt+ dM_t^{\psi,\delta},\end{split}\end{equation}
 where in the second inequality we again used the fact that $\psi'$ is decreasing on $[0,\infty)$,
 and the last inequality follows from the facts that $\psi'(0)=1+c_1$ and
 $$\sup_{r>0}\frac{ r}{\psi(r)}\le
 \sup_{r>0}\frac{1}{\psi'(r)}=\frac{1}{\psi'(2l_0)}=\frac{1}{2c_1}.$$Note
 that in the argument above we also used the assumption that $\Ee |X_t|<\infty$ for all $t>0$ to ensure the finiteness of
 $\Ee |U_t^{\delta}|$.

(iv) As a consequence of \eqref{e:ppoo}, we find that
\begin{equation*}
\begin{split}
&\Ee \big[e^{\lambda t} \psi(|U_{t}^{\delta}|)\big]\\
&= \Ee \psi(|U_0^\delta|) + \Ee \bigg(\int_0^{t} \big[\lambda e^{\lambda s}\psi(|U_{s}^{\delta}|) +e^{\lambda s}\, d\psi(|U_{s}^{\delta}|)/ds \big] \,ds\bigg) \\
&\le \Ee \psi(|U_0^\delta|) + \Ee \bigg(\int_0^{t} e^{\lambda s}\Big[\lambda\psi(|U_{s}^{\delta}|)\\
&\qquad-\lambda\psi(|U_s^{\delta}|)+  K_3(1+c_1) \big((\Ee |U_s^{\delta}|)- |U_s^{\delta}|\big) + \lambda_0 \psi(\delta) + K_1\psi'(0) (l_0 \wedge \delta) \Big]
\,ds\bigg)\\
&=\Ee \psi(|U_0^\delta|) + \int_0^t e^{\lambda s} \big( \lambda_0 \psi(\delta) + K_1\psi'(0) (l_0 \wedge \delta) \big) ds \\
&= \Ee \psi(|U_0^\delta|) +  \bigg( \lambda_0 \psi(\delta) + K_1\psi'(0) (l_0 \wedge \delta) \bigg) \frac{1}{\lambda} (e^{\lambda t} - 1) \,.
\end{split}
\end{equation*}
Hence we obtain
\begin{equation*}
\Ee \psi(|U_{t}^{\delta}|) \leq e^{-\lambda t} \Ee \psi(|U_0^\delta|) +  e^{-\lambda t} \big( \lambda_0\psi(\delta) + K_1\psi'(0) (l_0 \wedge \delta) \big) \frac{1}{\lambda} (e^{\lambda t} - 1)
\end{equation*}
for any $\delta > 0$. Recall, however, from the discussion in step (ii), that for any $\delta > 0$ the process $(X_t, Y_t^{\delta})_{t \geq 0}$ is a coupling of two copies of $(X_t)_{t \geq 0}$ with initial distributions $\mu_{X_0}$ and $\mu_{Y_0}$, respectively. Hence
\begin{equation*}
W_{\psi}(\mu_{X_t}, \mu_{Y_t}) \leq e^{-\lambda t} \Ee \psi(|X_0 - Y_0|) +  e^{-\lambda t} \big( \lambda_0\psi(\delta) + K_1\psi'(0) (l_0 \wedge \delta) \big) \frac{1}{\lambda} (e^{\lambda t} - 1) \,.
\end{equation*}
 Since the estimate above holds for any $\delta > 0$, we can now take the limit $\delta \to 0$ and, using the continuity of $\psi$, we obtain
\begin{equation*}
W_{\psi}(\mu_{X_t}, \mu_{Y_t}) \leq e^{-\lambda t} \Ee \psi(|X_0 - Y_0|) \,,
\end{equation*}
which, along with the fact that $2c_1r\le \psi(r)\le (1+c_1)r$ for all $r>0$, yields the desired assertion.
\end{proof}

To conclude this
subsection, we explain how to apply Theorem \ref{thtpw} to prove Theorem \ref{Main-1} and then we discuss how to apply Theorem \ref{thtpw} to McKean-Vlasov SDEs with the drift of the form \eqref{e:dri}.

\begin{proof}[Proof of Theorem $\ref{Main-1}$]
    It is easily seen from condition (2-ii) that assumption \eqref{e:sigma} of Theorem \ref{thtpw} holds with $\sigma(r)=c_0r^{1-\alpha}$. In particular,
    $\sigma(r)\in C([0,2l_0])\cap C^2((0,2l_0])$ is nondecreasing and concave, and $\int_0^r \sigma(s)^{-1}\,ds$ is well defined for all $r\in [0,2l_0]$.
     Hence, using Theorem \ref{thtpw} and Remark \ref{r:thtpw}(ii), we obtain the assertion of Theorem \ref{Main-1}.
\end{proof}

\begin{remark}\label{remarkDrift}
    For McKean-Vlasov SDEs with the drift of the form \eqref{e:dri}, we can easily check that condition (1-i), combined with the assumption that
    \begin{equation}\label{coe-2}
    |b_2(x,z) - b_2(y,z')| \leq K_{b_2}(|x-y|+|z-z'|)
    \end{equation}
    holds for some constant $K_{b_2} > 0$ and for all $x$, $y$, $z$ and $z' \in \R^d$, implies (2-i).
    First, we claim that \eqref{coe-2} implies
    that for any $x,y\in \R^d$ and $\mu_1,\mu_2\in \mathscr{P}_1(\R^d)$,
    \begin{equation}\label{e:nottee}\bigg|\int b_2(x,z)\,\mu_1(dz)-\int b_2(y,z)\,\mu_2(dz)\bigg|\le K_{b_2}(|x-y|+W_1(\mu_1,\mu_2)).\end{equation} Indeed,
    let $\pi$ be the coupling of $\mu_1$ and $\mu_2$ for which the infimum in the definition of $W_1(\mu_1, \mu_2)$ is attained. Then, for any $x,y\in \R^d$ and $\mu_1,\mu_2\in
    \mathscr{P}_1(\R^d)$,
    \begin{align*}&\bigg|\int b_2(x,z)\,\mu_1(dz)-\int b_2(y,z)\,\mu_2(dz)\bigg|\\
    &=\bigg|\iint b_2(x,z)\,\pi(dz,dz')-\iint b_2(y,z')\,\pi(dz,dz')\bigg|\\
    &\le \int \int |b_2(x,z)-b_2(y,z')|\,\pi(dz,dz')\\
    &\le K_{b_2}(|x-y|+W_1(\mu_1,\mu_2)). \end{align*} Now,
    according to
    \eqref{e:nottee} and the continuity of $b_1(x)$ on
    $\R^d$, we can see that $b(x,\mu)=b_1(x)+\int b_2(x,z)\,\mu(dz)$ is continuous on $\R^d\times
    \mathscr{P}_1(\R^d).$ On the other hand, it immediately  follows
    from condition (1-i) and \eqref{e:nottee} that
    \eqref{assumptionOneSidedLipschitzForMcKean} is satisfied.
    Moreover, for any $\mu\in \mathscr{P}_1(\R^d)$, due to \eqref{coe-2} again,
   \begin{align*}|b(0,\mu)|\le &|b_1(0)|+\int |b_2(0,z)|\,\mu(dz)\\
   \le & |b_1(0)|+\int (|b_2(0,0)|+K_{b_2}|z|)\,\mu(dz)\le C_0\left(1+\int |z|\,\mu(dz)\right).\end{align*}
     Hence, (2-i)
    holds true.
    We shall note that \eqref{coe-2} holds true for any $b_2(x,z)$ which satisfies condition (1-ii).
    \end{remark}

\subsection{Convergence in $W_1$: the Lyapunov function approach}
 To study exponential convergence of the McKean-Vlasov SDE \eqref{non-1} without assuming the contractivity at infinity condition ${\mathbf B(K_1, K_2, l_0; K_3)}$ on the drift, we will make use of the Lyapunov function approach instead.
Though the arguments below are partly motivated by these in Subsection  \ref{Sec2.3},  we will see that the case for the distribution-dependent drift is much more complex and delicate than that for the distribution-independent drift.
To this end, we will assume that the drift term $b(x,\mu)$ is of the form
\begin{equation}\label{e:drift-m}b(x,\mu)=b_1(x)+b_2(x,\mu),\end{equation} where $b_1(x)$ is such that there exist constants $\lambda$ and $C_0>0$ such that for all $x\in \R^d$,
\begin{equation}\label{e:drfit-a}\langle b_1(x),x\rangle\le - \lambda |x|^2 +C_0.\end{equation}
We assume \eqref{e:drfit-a} in order to be able to construct a Lyapunov function for \eqref{non-1}. Note that the general Lyapunov condition \eqref{e:ly} that we used in the distribution-independent case does not have a straightforward counterpart in the McKean-Vlasov case, cf.\ the proof of Lemma \ref{lem:var2} below, and hence we work directly with \eqref{e:drfit-a}.
Moreover, \eqref{e:drfit-a} seems to be a natural condition in the study of the exponential ergodicity of McKean-Vlasov SDEs (see \cite[Assumption 2.7]{EGZ}).
We will also need the following assumption:

{\noindent{{\bf Assumption ({C})}\it
    \begin{itemize}
        \item[(i)] There are constants $K_1$, $K_2$ and $K_3>0$ such that for all $x_1,x_2\in \R^d$
        with $x_1\neq x_2$ and $\mu_1,\mu_2\in \mathscr{P}_1(\R^d)$,
$$\frac{\langle b_1(x_1)-b_1(x_2),x_1-x_2\rangle}{|x_1-x_2|} \le   K_1 |x_1-x_2|$$ and  $$\frac{\langle b_2(x_1,\mu_1)-b_2(x_2,\mu_2),x_1-x_2\rangle}{|x_1-x_2|} \le   K_2 |x_1-x_2| +K_3W_{1}(\mu_1,\mu_2).$$
        \item[(ii)]There is a constant $B_0>0$ such that for all $x\in \R^d$ and $\mu\in\mathscr{P}_1(\R^d)$,  $$|b_2(x,\mu)|\le B_0\left(1+\int |z|\,\mu(dz)+|x|\right).$$
    \item[(iii)] For any $\theta>0$, there exists a measure $0<\nu_\theta\le \nu$ such that ${\rm {supp}}\,\nu_\theta \subset B(0,1)$, $\int_{\{|z|\le 1\}} |z|\,\nu_\theta(dz)\le\theta,$ and
        $$
        \lim_{r\to 0}\inf_{s\in (0,r]} J_{\nu_\theta}(s)s^\alpha >0,
        $$ where $\alpha:=\alpha(\theta)\in (0,1)$ and $$J_{\nu_\theta}(s):= \inf_{x\in \R^d: |x|\le s} \big[\nu_\theta\wedge (\delta_x\ast \nu_\theta)\big]( \R^d)>0.$$
    \end{itemize}}

It is easy to see that under Assumption {\bf(C)}(i)--(ii), both \eqref{e:assdrift} and \eqref{e:assdrift00} are satisfied. Thus,
according to Proposition \ref{P:exist--}, if $\int_{\{|z|>1\}}|z|\,\nu(dz)<\infty$, then
the SDE \eqref{non-1}
has a unique non-explosive strong solution $(X_t)_{t\ge0}$ such that
$\Ee |X_t|<\infty$ for all $t>0$.

Let us begin by discussing how to construct a Lyapunov function for the  McKean-Vlasov SDE \eqref{non-1} under Assumption {\bf (C)}.

\begin{lemma}\label{lem:var2} Let the drift term  $b(x,\mu)$ be of the form \eqref{e:drift-m}, and satisfy \eqref{e:drfit-a} and Assumption {\bf (C)}$(i)$ and $(ii)$.
Let $\int_{\{ |z| > 1 \}} |z| \,\nu(dz) < \infty$, and let
$V\in C^2(\R^d)$ be a radial function such that $V(x)\ge1$ for all $x\in \R^d$,
$V(x)=1+|x|$ for all $|x|\ge1$
and $\| \nabla V \|_{\infty} + \| \nabla^2 V \|_{\infty} < \infty$.
Then, there is a constant $C>0$ $($independent of $B_0)$ such
that for all $t>0$,
\begin{align*}
dV(X_t)\le& \left[-(\lambda-2\| \nabla V \|_{\infty}B_0)V(X_t)+\| \nabla V \|_{\infty}B_0((\Ee |X_t|)-|X_t|)+C(1+B_0)\right]\,dt \\
&+dM_t^V,
\end{align*}
 where $(M_t^V)_{t\ge0}$ is a
 martingale, $\lambda$ is given in \eqref{e:drfit-a} and $B_0$ is the constant in Assumption {\bf(C)}$(ii)$.
In particular, when $0<2\|\nabla V\|_{\infty}B_0<\lambda$, for any $X_0$ such that $\Ee V(X_0) < \infty$ and for any $t>0$,
\begin{equation}\label{lem:var2--}\Ee V(X_t)\le \Ee V(X_0)e^{-(\lambda-2\| \nabla V \|_{\infty}B_0) t}+C(1+B_0)/(\lambda-2\| \nabla V \|_{\infty}B_0).\end{equation}
\end{lemma}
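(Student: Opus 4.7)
The plan is to apply It\^{o}'s formula to $V(X_t)$ and then carefully bound each of the resulting terms using the assumptions on $b_1$, $b_2$, $\nu$ and $V$, obtaining the desired SDE form for $V(X_t)$; the moment estimate \eqref{lem:var2--} then follows by taking expectations and applying a Gronwall-type argument.

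More precisely, since $(X_t)_{t\ge 0}$ solves \eqref{non-1} with drift $b(x,\mu)=b_1(x)+b_2(x,\mu)$ and noise $Z$ with L\'evy measure $\nu$, It\^{o}'s formula (first applied up to the exit times $\tau_n=\inf\{t\ge 0:|X_t|\ge n\}$ to guarantee integrability, and then passed to the limit by monotone convergence) yields
\begin{equation*}
\begin{split}
dV(X_t)=&\,\langle\nabla V(X_t), b_1(X_t)+b_2(X_t,\mu_t)\rangle\,dt\\
&+\int\bigl(V(X_t+z)-V(X_t)-\langle\nabla V(X_t),z\rangle\I_{\{|z|\le 1\}}\bigr)\,\nu(dz)\,dt + dM_t^V,
\end{split}
\end{equation*}
where $M_t^V$ is a local martingale stemming from the compensated Poisson integral.

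For the $b_1$ contribution, the radial structure of $V$ gives $\nabla V(x)=x/|x|$ whenever $|x|\ge 1$, so that by \eqref{e:drfit-a},
\begin{equation*}
\langle\nabla V(x),b_1(x)\rangle=\frac{\langle x,b_1(x)\rangle}{|x|}\le -\lambda|x|+\frac{C_0}{|x|}\le -\lambda V(x)+\lambda+C_0,\qquad |x|\ge 1,
\end{equation*}
while on $\{|x|<1\}$ both sides are bounded, so this term can be absorbed into a constant. For the $b_2$ contribution, Assumption {\bf(C)}(ii) and $\mu_t=\mathrm{Law}(X_t)$ give
\begin{equation*}
|\langle\nabla V(X_t),b_2(X_t,\mu_t)\rangle|\le\|\nabla V\|_\infty B_0\bigl(1+\Ee|X_t|+|X_t|\bigr).
\end{equation*}
Writing $\Ee|X_t|+|X_t|=2|X_t|+(\Ee|X_t|-|X_t|)$ and using $|X_t|\le V(X_t)$ (which holds everywhere since $V\ge 1$ and $V(x)=1+|x|$ for $|x|\ge 1$), the $b_2$ contribution is bounded by
\begin{equation*}
2\|\nabla V\|_\infty B_0\,V(X_t)+\|\nabla V\|_\infty B_0\bigl(\Ee|X_t|-|X_t|\bigr)+\|\nabla V\|_\infty B_0.
\end{equation*}

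For the integral term, split at $|z|=1$: on $\{|z|\le 1\}$ Taylor's formula together with $\|\nabla^2 V\|_\infty<\infty$ gives $|V(x+z)-V(x)-\langle\nabla V(x),z\rangle|\le \tfrac12\|\nabla^2 V\|_\infty |z|^2$, contributing at most $\tfrac12\|\nabla^2 V\|_\infty\int_{\{|z|\le 1\}}|z|^2\,\nu(dz)<\infty$; on $\{|z|>1\}$, the mean value theorem and $\|\nabla V\|_\infty<\infty$ give a bound of $\|\nabla V\|_\infty\int_{\{|z|>1\}}|z|\,\nu(dz)$, which is finite by hypothesis. Hence the integral term contributes a constant, independent of $B_0$. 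Assembling the three estimates yields the claimed differential inequality with a constant $C>0$ independent of $B_0$.

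For \eqref{lem:var2--}, take expectations: the martingale increment has mean zero (after standard localization $\tau_n\uparrow\infty$, using that $V(X_t)\le 1+|X_t|$ and $\Ee|X_t|<\infty$), and crucially $\Ee\bigl(\Ee|X_t|-|X_t|\bigr)=0$, which eliminates the mean-field term. This gives
\begin{equation*}
\frac{d}{dt}\Ee V(X_t)\le -\bigl(\lambda-2\|\nabla V\|_\infty B_0\bigr)\Ee V(X_t)+C(1+B_0),
\end{equation*}
so when $0<2\|\nabla V\|_\infty B_0<\lambda$ Gronwall's lemma delivers \eqref{lem:var2--}. The main subtleties will be (a) justifying the martingale property of $(M_t^V)_{t\ge 0}$ via localization and the a priori bound $\Ee|X_t|<\infty$ from Proposition \ref{P:exist--}, and (b) tracking the decomposition $\Ee|X_t|+|X_t|=2|X_t|+(\Ee|X_t|-|X_t|)$ carefully so that the Lyapunov-type contraction $-(\lambda-2\|\nabla V\|_\infty B_0)V$ and the mean-zero residual cleanly separate.
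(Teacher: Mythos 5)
Your proposal is correct and follows essentially the same route as the paper: It\^{o}'s formula for $V(X_t)$, splitting the drift into the $b_1$, $b_2$, and L\'{e}vy-integral contributions, exploiting the radial structure $\nabla V(x)=x/|x|$ for $|x|\ge 1$ to convert the dissipativity \eqref{e:drfit-a} into $-\lambda V(X_t)+\text{const}$, then the decomposition $\Ee|X_t|+|X_t|=2|X_t|+(\Ee|X_t|-|X_t|)$ combined with $|x|\le V(x)$ to isolate the mean-zero residual, and Gronwall after taking expectations. The one place where you are a bit terse is the $b_1$ contribution on $\{|x|<1\}$: you assert ``both sides are bounded,'' which is true but for a reason worth making explicit. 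The paper does this by writing $\langle b_1(x),\nabla V(x)\rangle\I_{\{|x|\le 1\}}=\tfrac{\langle b_1(x),x\rangle}{|x|}\bar V'(|x|)\I_{\{|x|\le 1\}}$ (radial structure again) and bounding $\langle b_1(x),x\rangle\le K_1|x|^2+|b_1(0)||x|$ via Assumption \textbf{(C)}(i), giving the explicit constant $(K_1+|b_1(0)|)\|\nabla V\|_\infty$; this works even if $b_1$ itself were not bounded near the origin, whereas your shortcut implicitly invokes continuity of $b_1$ (available via Assumption \textbf{(H)}, so not wrong, just less explicit). Either way the argument closes, so this is a stylistic rather than substantive difference.
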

\begin{proof}
Let $V\in C^2(\R^d)$ be a radial function such that $V(x)\ge1$ for all $x\in \R^d$,
$V(x)=1+|x|$ for all $|x|\ge1$,
 and  $\| \nabla V \|_{\infty} + \| \nabla^2 V \|_{\infty} < \infty$.
 Let $\mu_{X_t}$ be the distribution of $X_t$. Then, by the It\^{o} formula, it holds that
\begin{align*}dV(X_t)=L[\mu_{X_t}]V(X_t)\,dt+dM_t^V,\end{align*}  where $L[\mu]$ is defined in \eqref{e:inf1} and
$$M_t^V := \int_0^t \int_{\R^d} \left( V(X_{s-} + z) - V(X_{s-}) \right) \,\widetilde{N}(ds,dz)$$ is a martingale, thanks to the assumption that $\int_{\{|z|\ge1\}}|z|\,\nu(dz)<\infty$.

Using the mean value theorem and the fact that $\int_{\{|z|\ge1\}}|z|\,\nu(dz)<\infty$ again, we find that
\begin{align*}&\int(V(x+z)-V(x)-\langle\nabla V(x),z\rangle\I_{\{|z|\le
    1\}})\,\nu(dz)\\
&=\int_{\{|z|\le 1\}}(V(x+z)-V(x)-\langle\nabla V(x),z\rangle)\,\nu(dz)+\int_{\{|z|\ge 1\}}(V(x+z)-V(x))\,\nu(dz)\\
&\le \frac{1}{2}\|\nabla^2V\|_\infty \int_{\{|z|\le 1\}}|z|^2\,\nu(dz) + \|\nabla V\|_\infty\int_{\{|z| \ge 1\}}|z|\,\nu(dz)=:C_1,
\end{align*} where in the last step we used the definition of $V$ (i.e., the function $V\in C^2(\R^d)$ such that $\|\nabla V\|_\infty+\|\nabla^2V\|_\infty<\infty$).

On the other hand, by Assumption {\bf (C)}(ii),
$$
\langle b_2(x, \mu) , \nabla V(x) \rangle \leq |b_2(x,\mu)| \| \nabla V \|_{\infty} \leq \| \nabla V \|_{\infty} B_0 \left( 1 + \int |z| \,\mu(dz) + |x| \right).
$$
In order to deal with the term $\langle b_1(x) , \nabla V(x) \rangle$, we write
$$
 \langle b_1(x) , \nabla V(x) \rangle =  \langle b_1(x) , \nabla V(x) \rangle \I_{\{ |x| \leq 1 \}}+  \langle b_1(x) , \nabla V(x) \rangle\I_{\{ |x| > 1 \}}
$$
and recall that $V(x)$ is radial, which means that there exists a function $\bar{V} : [0,\infty) \to [1, \infty)$ such that $V(x) = \bar{V}(|x|)$ for all $x \in \R^d$. Since by Assumption {\bf (C)}(i),
$$
\langle b_1(x) , x \rangle \leq K_1 |x|^2 + |b_1(0)| \cdot |x|,
$$
we have
$$
\langle b_1(x) , \nabla V(x) \rangle \I_{\{ |x| \leq 1 \}} = \frac{\langle b_1(x) , x \rangle}{|x|} \bar{V}'(|x|) \I_{\{ |x| \leq 1 \}} \leq (K_1 + |b_1(0)|) \| \nabla V \|_{\infty}
$$
where we used $\| \bar{V}' \|_{\infty} = \| \nabla V \|_{\infty}$. Moreover, according to \eqref{e:drfit-a},
$$
\langle b_1(x) , \nabla V(x) \rangle \I_{\{ |x| > 1 \}} \leq -\lambda |x|\I_{\{ |x| > 1 \}} + C_0 ,
$$
where we used the facts that $V(x) = |x| + 1$ for $|x| > 1$, which implies $\bar{V}'(|x|) = 1$ for $|x|>1$, and $C_0/|x| \leq C_0$ for $|x| > 1$.

Hence we get
\begin{align*}
L[\mu]V(x) &\leq C_1 + \| \nabla V \|_{\infty} B_0 \left( 1 + \int |z|\, \mu(dz) + |x| \right) + (K_1 + |b_1(0)|) \| \nabla V \|_{\infty} \\
&\quad- \lambda (|x| + 1)\I_{\{ |x| > 1 \}} + \lambda + C_0 \\
&= C_1 + \| \nabla V \|_{\infty} B_0 \left( 1 + \int |z| \,\mu(dz) + |x| \right) + (K_1 + |b_1(0)|) \| \nabla V \|_{\infty} \\
&\quad - \lambda V(x) + \lambda V(x) \I_{\{ |x| \leq 1 \}} + \lambda + C_0 \\
&\leq C_1 + \| \nabla V \|_{\infty} B_0 \left( \int |z|\, \mu(dz) + V(x) \right) + (K_1 + |b_1(0)|) \| \nabla V \|_{\infty} \\
&\quad - \lambda V(x) + \lambda \sup_{x \in B(0,1)} V(x) + \lambda + C_0 + \| \nabla V \|_{\infty} B_0 ,
\end{align*}
where in the second step we used the fact that $V(x) = |x| + 1$ for $|x| > 1$ and the last inequality follows from  $V(x) \geq |x|$ for all $x \in \R^d$.
We conclude that there is a constant $C>0$ given by
$$
C := \max \left( \| \nabla V \|_{\infty} , C_1 + C_0 + (K_1 + |b_1(0)|) \| \nabla V \|_{\infty} + \lambda \left( 1 + \sup_{x \in B(0,1)} V(x) \right) \right)
$$
such that
for all $x\in \R^d$ and $\mu\in \mathscr{P}_1(\R^d)$,
$$
L[\mu] V(x) \le -(\lambda-\| \nabla V \|_{\infty}B_0)V(x)+\| \nabla V \|_{\infty}B_0\int |z|\,\mu(dz)+C(1+B_0).
$$

 With this inequality at hand, we obtain
\begin{align*}dV(X_t)&=L[\mu_{X_t}]V(X_t)\,dt+dM_t^V\\
&\le  [-(\lambda-\| \nabla V \|_{\infty}B_0)V(X_t)+\| \nabla V \|_{\infty}B_0\Ee |X_t|+C(1+B_0)]\,dt+dM_t^V\\
&\le [-(\lambda-2\| \nabla V \|_{\infty}B_0)V(X_t)+\| \nabla V \|_{\infty}B_0((\Ee
|X_t|)-|X_t|)+C(1+B_0)]\,dt\\
&\quad +dM_t^V,\end{align*} where in the last step we used the fact that $|x| \leq V(x)$ for all $x \in \R^d$. This proves the
first assertion. Furthermore, since $\Ee|X_t|<\infty$ for all $t>0$,
we can obtain the second one.
\end{proof}

\begin{remark}\label{remarkLyapunovForStandardSDE}
    Note that when $b_2(x,\mu) = 0$ for all $x\in \R^d$ and $\mu\in \mathscr{P}(\R^d)$ in \eqref{e:drift-m}, the proof of Lemma \ref{lem:var2} still
    works in the same way. Hence, assuming $\int_{\{ |z| > 1 \}} |z| \,\nu(dz) < \infty$ and
    $\langle b(x) , x \rangle \leq - \lambda |x|^2 + C_0$ for all $x \in \R^d$ with some $\lambda,
C_0>0$, we obtain a Lyapunov function $V$ as stated in Lemma \ref{lem:var2} for the distribution independent SDE \eqref{s1}. In particular, this Lyapunov function satisfies both Assumption {\bf (A)}(iii) and Assumption {\bf (B)}(iii).
\end{remark}

  We will now extend Theorem \ref{th2:le} to
  the  McKean-Vlasov  SDE given by \eqref{non-1}.

\begin{theorem}\label{theoremMcKeanVlasovMultiplicative}Suppose that the drift $b(x,\mu)$ is of the form \eqref{e:drift-m} such that \eqref{e:drfit-a} and Assumption {\bf (C)} are satisfied.
	 Suppose also that $\int_{\{ |z| > 1 \}} |z| \nu(dz) < \infty$.
	 Let $V$ be the Lyapunov function from Lemma $\ref{lem:var2}$, and let $\mu_{X_0}$ and $\mu_{Y_0}$ be probability measures such that the integrals $\mu_{X_0}(V)$ and $\mu_{Y_0}(V)$ are finite
$($which is  
equivalent to saying
that both $\mu_{X_0}$ and $\mu_{Y_0}$ have finite first moment$)$.
 Then there are constants $K_2^*$, $K_3^*$ and $ B_0^*>0$ such that for all $K_2\in (0,K_2^*)$, $K_3\in (0,K_3^*)$, $B_0\in (0,B_0^*)$ and $t>0$,
$$W_{\Phi}(\mu_{X_t}, \mu_{Y_t})\le C_0 e^{-\lambda_0t}  \left( \mu_{X_0}(V) + \mu_{Y_0}(V) \right)^2,$$ where
$\Phi(x,y)=(|x-y|\wedge1)(V(x)+V(y))$,
 and $C_0,\lambda_0>0$ are constants independent of $\mu_{X_0}$, $\mu_{Y_0}$ and
$t$.
\end{theorem}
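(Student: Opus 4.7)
The plan is to combine the multiplicative Wasserstein approach of Theorem \ref{th2:le} with the McKean-Vlasov coupling construction from the proof of Theorem \ref{thtpw}, while handling the additional Lyapunov perturbation caused by the $b_2$-drift via Lemma \ref{lem:var2}. First, for each $\delta>0$, I would construct a coupling $(X_t,Y_t^{\delta})_{t\ge0}$ of two solutions of \eqref{non-1} with initial laws $\mu_{X_0},\mu_{Y_0}$ by solving the nonlinear martingale problem associated with the coupling operator
$\widetilde{L}[\mu_1,\mu_2]F(x,y)=\langle b(x,\mu_1),\nabla_xF\rangle+\langle b(y,\mu_2),\nabla_yF\rangle+\widetilde{L}^\delta_ZF(x,y),$
exactly as in step (ii) of the proof of Theorem \ref{thtpw}: continuity and local boundedness of the coefficients together with \cite[Propositions 1.7 and 1.10]{JMW} yield a weak solution, and Proposition \ref{P:exist--} guarantees that the marginals are non-explosive with finite first moments.

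Next, I would take the reference function $F(x,y)=\psi(|x-y|)(1+\varepsilon(V(x)+V(y)))$, where $V$ is the Lyapunov function from Lemma \ref{lem:var2} and $\psi$ is the bounded concave function from Lemma \ref{L:kk1}, constructed with $g(r)=C_*(2(K_1+K_2)+1)l_0^{2-\alpha}r^\alpha$ so that the one-sided Lipschitz constant of $b_1+b_2$ enters exactly as $K_1$ did in the proof of Theorem \ref{th2:le}. Applying It\^o's formula to $F(X_t,Y_t^\delta)$ and repeating the computation \eqref{e:estim1}--\eqref{e:thm26aux4} of Theorem \ref{th2:le}, now with $\widetilde L^\delta$ instead of $\widetilde L$, I would obtain an estimate of the form
$$\widetilde L^\delta F(x,y)\le -\lambda_0 F(x,y)+R_\delta(x,y)+K_3\psi'(|x-y|)\bigl(1+\varepsilon(V(x)+V(y))\bigr)W_1(\mu_{X_t},\mu_{Y_t})+\varepsilon B_0 E(x,y),$$
where $R_\delta(x,y)\to 0$ uniformly as $\delta\to 0$ (using the same argument as in \eqref{e:estimateused}) and $E(x,y)$ collects the error coming from replacing $LV$ by $L[\mu]V$ in the proof of Theorem \ref{th2:le}, which by Lemma \ref{lem:var2} is controlled by $\|\nabla V\|_\infty B_0\bigl(V(x)+V(y)+\Ee|X_t|+\Ee|Y_t^\delta|\bigr)$. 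For $B_0<B_0^*$ small enough the $\varepsilon B_0 E$ term can be absorbed into the $-\lambda_0 F$ dissipation.

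The main obstacle is the $K_3$ term, which couples the evolution of $\Ee F(X_t,Y_t^\delta)$ to the deterministic quantity $W_1(\mu_{X_t},\mu_{Y_t})$. To handle this I would use two ingredients. First, by construction of $\psi$ and because $V\ge 1$, the elementary inequality
$|x-y|\le C_\psi\,\psi(|x-y|)(V(x)+V(y))$
holds, so that $W_1(\mu_{X_t},\mu_{Y_t})\le\Ee|X_t-Y_t^\delta|\le(C_\psi/\varepsilon)\Ee F(X_t,Y_t^\delta)$. Second, Lemma \ref{lem:var2} (applied to both marginals, valid when $B_0<B_0^*$) gives the uniform bound $\Ee V(X_t)+\Ee V(Y_t^\delta)\le C_V\bigl(1+\mu_{X_0}(V)+\mu_{Y_0}(V)\bigr)$, which controls the factor $1+\varepsilon\Ee(V(X_t)+V(Y_t^\delta))$ appearing after taking expectation in the $K_3$ term. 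Combining these, taking expectations in the It\^o formula and using the fact that $\psi'$ is decreasing, I would arrive at a differential inequality
$$\frac{d}{dt}\Ee F(X_t,Y_t^\delta)\le -\lambda_0\Ee F(X_t,Y_t^\delta)+K_3 C\bigl(1+\mu_{X_0}(V)+\mu_{Y_0}(V)\bigr)\Ee F(X_t,Y_t^\delta)+r(\delta),$$
where $r(\delta)\to 0$ as $\delta\to 0$. Choosing $K_3^*$ small enough that for $K_3\in(0,K_3^*]$ the coefficient stays negative uniformly (here the hidden linear-in-$\mu_0(V)$ factor forces the final bound to be quadratic after multiplying by the initial bound $\Ee F(X_0,Y_0)\le C(1+\mu_{X_0}(V)+\mu_{Y_0}(V))$), Gr\"onwall yields $\Ee F(X_t,Y_t^\delta)\le C_0 e^{-\lambda_0 t}(\mu_{X_0}(V)+\mu_{Y_0}(V))^2+r'(\delta)$.

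Finally, since $\Phi(x,y)=(|x-y|\wedge 1)(V(x)+V(y))\le (C/\varepsilon)F(x,y)$ (using $V\ge 1$, the lower bound $\psi(r)\ge c_1 r$ on $[0,2l_0]$ and the positive lower bound of $\psi$ on $[2l_0,\infty)$), I would bound $W_\Phi(\mu_{X_t},\mu_{Y_t})\le\Ee\Phi(X_t,Y_t^\delta)\le(C/\varepsilon)\Ee F(X_t,Y_t^\delta)$ and let $\delta\to 0$, using continuity of $\Phi$ away from the diagonal and dominated convergence to remove the $r'(\delta)$ term, obtaining the stated bound. The delicate point throughout is balancing the three smallness conditions on $K_2,K_3,B_0$ so that simultaneously $\psi$ produces enough dissipation through the refined basic coupling, $V$ remains a Lyapunov function under the $b_2$ perturbation, and the Gr\"onwall coefficient stays negative uniformly in the initial data.
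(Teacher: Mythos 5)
Your overall architecture — the regularized coupling operator $\widetilde L^\delta$, the reference function $F(x,y)=\psi(|x-y|)(1+\varepsilon(V(x)+V(y)))$, It\^o's formula, and a Gr\"onwall argument with the error $r(\delta)\to 0$ — matches the paper's strategy, and the early steps are sound. But there is a genuine gap in the endgame, precisely where you try to close the Gr\"onwall loop.

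The issue is your differential inequality
$$\frac{d}{dt}\Ee F(X_t,Y_t^\delta)\le -\lambda_0\Ee F(X_t,Y_t^\delta)+K_3 C\bigl(1+\mu_{X_0}(V)+\mu_{Y_0}(V)\bigr)\Ee F(X_t,Y_t^\delta)+r(\delta).$$
You obtain the factor $C\bigl(1+\mu_{X_0}(V)+\mu_{Y_0}(V)\bigr)$ by applying the \emph{uniform-in-time} version of Lemma \ref{lem:var2} to bound $1+\varepsilon\,\Ee\bigl(V(X_t)+V(Y_t^\delta)\bigr)$. The resulting Gr\"onwall coefficient is $-\lambda_0+K_3 C\bigl(1+\mu_{X_0}(V)+\mu_{Y_0}(V)\bigr)$, which \emph{depends on the initial distributions}. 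No fixed $K_3^*$ (independent of $\mu_{X_0},\mu_{Y_0}$, as the theorem requires) can make this negative for all admissible initial data; for large $\mu_{X_0}(V)+\mu_{Y_0}(V)$ your bound yields exponential growth, not decay. Multiplying by the initial bound $\Ee F(X_0,Y_0)\le C(1+\mu_{X_0}(V)+\mu_{Y_0}(V))$ does nothing to repair the sign of the exponent; it cannot convert a bad decay rate into a quadratic prefactor. The same problem recurs in your treatment of $\varepsilon B_0 E(x,y)$: the part of $E$ coming from $\Ee|X_t|+\Ee|Y_t^\delta|$ likewise carries a hidden linear-in-initial-data factor that you claim to absorb into $-\lambda_0 F$.

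What the paper actually does at this point is to use the full time-decaying bound \eqref{lem:var2--} from Lemma \ref{lem:var2},
$$\Ee V(X_s)\le \Ee V(X_0)\,e^{-(\lambda-2\|\nabla V\|_\infty B_0)s}+\frac{C(1+B_0)}{\lambda-2\|\nabla V\|_\infty B_0},$$
and to \emph{split} the factor $1+\varepsilon(\Ee V(X_s)+\Ee V(Y_s^\delta))$ into a time-independent constant part, which enters the Gr\"onwall coefficient and is controllable uniformly by shrinking $K_2,K_3,B_0$, and a part proportional to $(\mu_{X_0}(V)+\mu_{Y_0}(V))\,e^{-\gamma s}$, which gets multiplied by $\Ee F(X_s,Y_s^\delta)\le C(V(X_0)+V(Y_0^\delta))$ once more and then integrated in $s$, producing the bounded quadratic prefactor $(\mu_{X_0}(V)+\mu_{Y_0}(V))^2$ without contaminating the exponential rate. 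Without this split, the smallness thresholds $K_2^*,K_3^*,B_0^*$ cannot be chosen independently of the initial data, so the conclusion as stated does not follow from your argument.

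A secondary, non-fatal deviation: you fold $K_2$ into the construction of $g$ (writing $g(r)=C_*(2(K_1+K_2)+1)l_0^{2-\alpha}r^\alpha$), whereas the paper deliberately keeps $\psi$, $\varepsilon$ and $\lambda_0$ independent of $K_2,K_3,B_0$ and treats the entire $b_2$ contribution as a perturbation. Your variant is salvageable for small $K_2$, but it makes the bookkeeping harder because $\lambda_0$ then depends on the very parameter you are about to shrink, and the paper's separation is cleaner. The essential missing step, however, is the split of the Lyapunov bound into constant plus exponentially decaying pieces before closing Gr\"onwall.
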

\begin{proof} (i) Let $V$ be the Lyapunov function from Lemma \ref{lem:var2}. Choose $\psi$ as that in Lemma \ref{L:kk1}, and define $F(x,y)=\psi(|x-y|)(1+\varepsilon (V(x)+V(y)))$ for any $x,y\in \R^d$ and some $\varepsilon>0$. Then, following the proof of Theorem \ref{th2:le} and using \eqref{e:drfit-a} and Assumption {\bf (C)}(i) and (iii), we can find constants $\varepsilon, \lambda_0>0$ such that for all $x,y\in \R^d$,
\begin{equation}\label{e:times1}\widetilde L F(x,y)\le -\lambda_0 F(x,y),\end{equation} where $\widetilde L$ is the
operator given by \eqref{e:couplg} with $b$ replaced by $b_1$. Note
that in the argument above, in order to verify Assumption {\bf
(B)}(iii), we used \eqref{e:drfit-a} and the properties of the
function $V$, cf.\ Remark \ref{remarkLyapunovForStandardSDE}.
Note also that the constructions of functions $\psi$ and $F$ are
independent of $b_2(x,\mu)$, and, in particular, the constants
$\varepsilon, \lambda_0$ are independent of $K_2, K_3$ and $B_0$ in
Assumptions {\bf{C}}(i) and (ii).

Furthermore, for any $x_1,x_2\in \R^d$ and $\mu_1,\mu_2\in \mathscr{P}_1(\R^d)$,
\begin{align*}&\frac{\langle b_2(x_1,\mu_1)-b_2(x_2,\mu_2), x_1-x_2\rangle}{|x_1-x_2|} \psi'(|x_1-x_2|)(1+\varepsilon(V(x_1)+V(x_2)))\\
&\quad +\varepsilon\psi(|x_1-x_2|)\big(\langle b_2(x_1,\mu_1),\nabla V(x_1)\rangle+\langle b_2(x_2,\mu_2),\nabla V(x_2)\rangle\big)\\
&\le K_2\psi'(|x_1-x_2|)|x_1-x_2|(1+\varepsilon(V(x_1)+V(x_2)))\\
&\quad+ K_3\psi'(|x_1-x_2|)W_1(\mu_1,\mu_2)(1+\varepsilon(V(x_1)+V(x_2)))\\
&\quad+\varepsilon B_0 \|\nabla V \|_{\infty} \psi(|x_1-x_1|)\left(2+ \int |z|\,\mu_1(dz)+\int |z|\,\mu_2(dz)+|x_1|+|x_2|\right)\\
&\le (K_2+2 B_0 \|\nabla V \|_{\infty}) \psi(|x_1-x_2|)\\
&\qquad\times\left[1+\varepsilon(V(x_1)+V(x_2)) +  \varepsilon\left( \int |z|\,\mu_1(dz)+\int |z|\,\mu_2(dz)\right)\right]\\
&\quad + K_3\psi'(0)W_1(\mu_1,\mu_2)(1+\varepsilon(V(x_1)+V(x_2))),\end{align*} where we used Assumptions {\bf (C)}(i) and (ii) in the first inequality,
and in the second inequality we used $V(x) \geq |x|$ and the fact that $\psi'(r)r\le \psi(r)$ (since $\psi(0)=0$ and $\psi'$ is decreasing).

(ii)
 In the following, let $L$ be the operator given by \eqref{SDE-generator} with $b$ replaced by $b_1$. In particular, $\widetilde L$ above is a coupling operator of $L$.
Motivated by the proof of Theorem \ref{thtpw}, we want to replace the component $\widetilde{L}_Z$ of $\widetilde{L}$ in \eqref{e:couplg} with the generator $\widetilde{L}_Z^{\delta}$ corresponding to the combination of the refined basic coupling and the synchronous coupling, defined by \eqref{truncatedCoupling}.
Note that, for the coupling  operator  $\widetilde{L}^{\delta}$, the equality \eqref{e:estim1} in the proof of Theorem \ref{th2:le} becomes
\begin{align*}\widetilde{L}^{\delta} & F(x,y)\\
=&\widetilde{L}^{\delta} \psi(|x-y|) \cdot (1+\varepsilon(V(x)+V(y))) + \psi(|x-y|)\cdot \varepsilon L(V(x)+V(y))\\
&+\phi_\delta(|x-y|)\cdot\frac{1}{2}\varepsilon(\psi(|x-y-(x-y)_\kappa|)-\psi(|x-y|))\\
&\quad\times\int [(V(x+z)-V(x))+(V(y+z+(x-y)_\kappa)-V(y))]\,\mu_{\theta,(y-x)_\kappa}(dz)\\
&+\phi_\delta(|x-y|)\cdot\frac{1}{2}\varepsilon(\psi(|x-y+(x-y)_\kappa|)-\psi(|x-y|))\\
&\quad\times\int [(V(x+z)-V(x))+(V(y+z-(x-y)_\kappa)-V(y))]\,\mu_{\theta,(x-y)_\kappa}(dz)\\
=&\widetilde{L} F(x,y)\cdot \phi_\delta(|x-y|)+   \varepsilon \psi(|x-y|)L(V(x)+V(y))\cdot (1-\phi_\delta(|x-y|)).\end{align*}
Hence, using \eqref{e:times1} and repeating the reasoning from the proof of Theorem \ref{thtpw}, we obtain
\begin{align*}
\widetilde{L}^{\delta} F(x, y) \le & - \lambda_0 F(x, y) + \lambda_0 F(x, y) \cdot (1-\phi_\delta(|x-y|)) \\
&+  \varepsilon \psi(|x- y|)L(V(x) + V(y))  \cdot (1-\phi_\delta(|x-y|)) \\
&+ K_1\psi'(|x-y|)  |x-y|  \cdot (1-\phi_\delta(|x-y|)),
\end{align*}
where for the last term we used Assumption {\bf C}(i), i.e.,\ $b_1$ is one-sided Lipschitz with the constant $K_1$. Thus, due to the continuity of $\psi$ at zero and the fact that both $\psi'$ and $LV$ are bounded, we see that for any $x,y\in \R^d$,
$$
\widetilde{L}^{\delta} F(x, y) \leq - \lambda_0 F(x,y)+\lambda_0\psi(\delta)(1+\varepsilon (V(x)+V(y))) + C(\delta) \,,
$$
where $C(\delta)$ is independent of $K_2, K_3$ and $B_0$, and
such that $C(\delta) \to 0$ as $\delta \to 0$.

(iii)  Now, as in part (ii) of the proof of Theorem \ref{thtpw}, we can define a coupling process $(X_t,Y_t^\delta)_{t\ge0}$ of the process $(X_t)_{t\ge0}$, by using the system of SDEs \eqref{e:coupp}. Similarly as in the proof of Theorem \ref{th2:le}, we apply the refined basic coupling only to the component $\nu_\theta$ of the L\'evy measure $\nu$. Under Assumption {\bf(C)}, we can verify the existence of a weak solution to \eqref{e:coupp} in the present setting. Furthermore, by the It\^{o} formula,
\begin{align*}
&d F(X_t,Y^\delta_t)\\
&=\widetilde{L}^\delta F(X_t, Y^\delta_t)\, dt \\
&\quad+ \left( \frac{\psi'(|X_t - Y^\delta_t|)}{|X_t - Y^\delta_t|} \langle b_2(X_t, \mu_{X_t}) - b_2(Y^{\delta}_t, \mu_{Y^\delta_t}) , X_t - Y^\delta_t \rangle \right) \\
&\qquad\quad\times(1+ \varepsilon V(X_t) + \varepsilon V(Y^\delta_t))\,dt\\
&\quad+ \varepsilon \psi(|X_t - Y^\delta_t|)\left( b_2 (X_t , \mu_{X_t}) , \nabla V(X_t) \rangle + \langle b_2 (Y^\delta_t , \mu_{Y^\delta_t}) , \nabla V(Y^\delta_t) \rangle \right)\,dt\\
&\quad+ (1 + \varepsilon V(X_t) + \varepsilon V(Y^\delta_t)) \,dM_t^{\psi,\delta} + 2 \varepsilon \psi(|X_t - Y^\delta_t|)\,dM_t^V,
\end{align*} where both $(M_t^{\psi,\delta})_{t\ge0}$ and $(M_t^V)_{t\ge0}$ are martingales.
Using all the estimates in parts (i) and (ii), we can bound the drift component in the inequality above by
\begin{align*}
&-\lambda_0 F(X_t, Y^\delta_t) +\lambda_0\psi(\delta)(1+\varepsilon(V(X_t)+V(Y_t^\delta)))+C(\delta)\\
&\quad +(K_2+2 B_0 \|\nabla V \|_{\infty})\psi(|X_t - Y_t^\delta|) \left( 1 + \varepsilon (V(X_t) + V(Y^\delta_t) + \Ee |X_t| + \Ee |Y^\delta_t|) \right)\\
&\quad+K_3\psi'(0)(\Ee |X_t-Y_t^\delta|)\cdot (1+\varepsilon (V(X_t) + V(Y^\delta_t))) \\
&\leq -\lambda_0 F(X_t, Y_t^\delta) +\lambda_0\psi(\delta)(1+\varepsilon(V(X_t)+V(Y_t^\delta)))+C(\delta)\\
&\quad+ (K_2+2 B_0 \|\nabla V \|_{\infty}) F(X_t, Y_t^\delta)  \\
&\quad + (K_2+2 B_0 \|\nabla V \|_{\infty}) \psi(|X_t - Y_t^\delta|) \varepsilon \left( \Ee |X_t| + \Ee|Y_t^{\delta}| \right) \\
&\quad+K_3\psi'(0)(\Ee |X_t-Y_t^\delta|)\cdot (1+\varepsilon (V(X_t) + V(Y^\delta_t)))\\
&\le -[\lambda_0-(K_2+2 B_0 \|\nabla V \|_{\infty})]F(X_t,Y_t^\delta)\\
&\quad +(K_2+2 B_0 \|\nabla V \|_{\infty}) \psi'(0)|X_t-Y_t^\delta| \left( 1 + \varepsilon \Ee V(X_t) + \varepsilon \Ee V(Y_t^{\delta}) \right) \\
&\quad+ (\lambda_0\psi(\delta)+K_3\psi'(0)\Ee |X_t-Y_t^\delta|)(1+\varepsilon(V(X_t)+V(Y_t^\delta)))+C(\delta),
 \end{align*}
where in the second inequality we used the facts that $V(x) \geq |x|$ for all $x\in \R^d$ and $\psi(r) \leq \psi'(0)r$ for all $r \geq 0$.

Hence, applying all the estimates above, we find that for any
$\lambda_*,t>0$ and for any $X_0 \sim \mu_{X_0}$ and $Y_0^\delta
\sim \mu_{Y_0}$,
\begin{align*} &e^{\lambda_* t}\Ee F(X_t,Y_t^\delta)\\
&\leq \Ee F(X_0,Y_0^\delta)+ \big[\lambda_*-(\lambda_0-(K_2+2 B_0 \|\nabla V \|_{\infty}))\big] \int_0^t e^{\lambda_*s} \Ee F(X_s,Y_s^\delta)\,ds\\
&\quad+\lambda_0\psi(\delta)\int_0^t e^{\lambda_*s}(1+\varepsilon(\Ee V(X_s)+\Ee V(Y_s^\delta)))\,ds\\
&\quad+\left( K_3 + K_2+2 B_0 \|\nabla V \|_{\infty} \right) \psi'(0) \\
&\qquad \times \int_0^t e^{\lambda_*s}(\Ee |X_s-Y_s^\delta|)(1+\varepsilon(\Ee V(X_s)+\Ee V(Y_s^\delta)))\,ds\\
&\quad+C(\delta)\int_0^te^{\lambda_*s}\,ds\\
&=:
\Ee F(X_0,Y_0^\delta)+\sum_{i=1}^4 I_4.  \end{align*}

Next, we will give bounds for $I_2$ and $I_3$.
We first
choose $B_0\in (0,\lambda/(2\|\nabla V\|_{\infty}))$, where $\lambda$ is the constant given
in \eqref{e:drfit-a}. Then, according to Lemma \ref{lem:var2},
\begin{align*}I_2\le& \lambda_0\psi(\delta) \int_0^t e^{\lambda_*s}\\
&\quad \times\left(1+\varepsilon(\Ee V(X_0)+ \Ee V(Y^\delta_0))e^{-(\lambda-2 \|\nabla V\|_{\infty}B_0)s}+\frac{2\varepsilon C(1+B_0)}{\lambda-2\|\nabla V\|_{\infty}B_0}\right)\,ds\\
\leq& \lambda_0 \psi(\delta)  \left(1+\frac{2\varepsilon C(1+B_0)}{\lambda-2\|\nabla V\|_{\infty}B_0}+ \varepsilon \mu_{X_0}(V)
+ \varepsilon \mu_{Y_0}(V)\right)\int_0^t  e^{\lambda_*s}\,ds \\
=&: \widetilde{C}(\delta) \int_0^t  e^{\lambda_*s}\,ds, \end{align*}
where $\widetilde{C}(\delta)$ satisfies $\widetilde{C}(\delta) \to 0$ as $\delta \to 0$.
We will now proceed with bounding $I_3$.
In the arguments
below, the constants $C_i$ $(i=1,2,\ldots,4)$ are all independent of
$X_0$, $Y_0^\delta$, $t$, $\delta$, $K_2$, $K_3$ and $B_0$.
By the definition of $F(x,y)$ and the properties of
$V$, there is a constant $C_1\ge 1$ such that for all $x,y\in \R^d$,
$$C_1^{-1}|x-y|\le F(x,y)\le C_1 (V(x)+V(y)).$$ According to Lemma \ref{lem:var2} again, for all $s>0$,
\begin{equation}\label{e:auxThm35_2}
\begin{split}
1+&\varepsilon(\Ee V(X_s)+\Ee V(Y_s^\delta)) \\ &\leq C_2\left(1+\frac{1+B_0}{\lambda-2\|\nabla V\|_{\infty}B_0}\right)+C_2\Ee(V(X_0)+V(Y^{\delta}_0))e^{-(\lambda-2\|\nabla V\|_{\infty}B_0)s}.
\end{split}
\end{equation}
Hence,
\begin{align*}I_3&\le C_3\left( K_3 + K_2+  B_0  \right)\left(1+\frac{1+B_0}{\lambda-2\|\nabla V\|_{\infty}B_0}\right)\int_0^t e^{\lambda_*s} \Ee F(X_s,Y_s^\delta)\,ds\\
&\quad+ C_3\left( K_3 + K_2+  B_0   \right)\Ee (V(X_0)+V(Y_0^{\delta})) \int_0^t e^{[\lambda_*-(\lambda-2\|\nabla V\|_{\infty}B_0)]s}\Ee F(X_s,Y_s^\delta)\,ds\\
&\le C_3\left( K_3 + K_2+ B_0
\right)\left(1+\frac{1+B_0}{\lambda-2\|\nabla V\|_{\infty}B_0}\right)\int_0^t
e^{\lambda_*s}
\Ee F(X_s,Y_s^\delta)\,ds\\
&\quad + C_4\left( K_3 + K_2+  B_0
\right)\left(1+\frac{1+B_0}{\lambda-2\|\nabla V\|_{\infty}B_0}\right)\\
&\qquad\times \left( \Ee (V(X_0) + V(Y_0^{\delta}))
\right)^2\int_0^t e^{[\lambda_*-(\lambda-2\|\nabla V\|_{\infty}B_0)]s}\,ds, \end{align*}
where in the first inequality we used the fact that $|x-y| \leq C_1 F(x,y)$ for all $x$, $y \in \R^d$ and \eqref{e:auxThm35_2}, while in the second inequality we used the fact that $F(x,y)\le C_1(V(x)+V(y))$ for all $x,y\in \R^d$ and Lemma \ref{lem:var2}
again, as well as $$\Ee(V(X_0) + V(Y_0^{\delta})) \leq \left(
\Ee(V(X_0) + V(Y_0^{\delta})) \right)^2$$ since $V \geq 1$.

Combining all the estimates above, we arrive at
\begin{equation}\label{e:auxThm35}
\begin{split}
&e^{\lambda_* t}\Ee F(X_t,Y_t^\delta)\\
&\leq \Ee F(X_0,Y_0^\delta)\\
&\quad+\bigg[\lambda_*+K_2+  2B_0\|\nabla V\|_\infty +C_3\left( K_3 +
K_2+ B_0
\right)\left(1+\frac{1+B_0}{\lambda-2\|\nabla V\|_{\infty}B_0}\right)  \\
&\qquad\quad-\lambda_0\bigg] \int_0^t e^{\lambda_*s} \Ee F(X_s,Y_s^\delta)\,ds\\
&\quad+C_5(K_2,K_3,B_0)\left( \mu_{X_0}(V) + \mu_{Y_0}(V) \right)^2\int_0^te^{(\lambda_*+2\|\nabla V\|_{\infty}B_0-\lambda)s}\,ds\\
&\quad
+ \big( C(\delta) + \widetilde{C}(\delta) \big)\int_0^te^{\lambda_*s}\,ds.
\end{split}
\end{equation} Here and in what
follows, the constants $C_i(K_2,K_3,B_0)$ ($i=5,6,7$) may
depend on $K_2,K_3$ and $B_0$. Choosing $K_2^*$, $K_3^*$ and $B_0^*$
small enough so that
$$ K^*_2+  2B^*_0\|\nabla V\|_\infty
 +C_3\left( K^*_3 +
K^*_2+ B^*_0 \right)\left(1+\frac{1+B^*_0}{\lambda-2\|\nabla V\|_{\infty}B^*_0}\right)\le
\lambda_0/2,$$
and then letting $\lambda_* \leq \lambda_0/2$,
we can bound the second term on the right hand side of \eqref{e:auxThm35} by zero. Moreover, if we choose $\lambda_* < \lambda - 2 \| \nabla V \|_{\infty} B_0$, then we can bound the integral in the third term by a constant independent of $t$.
 As a consequence, we find
that for any $K_2\in (0,K_2^*)$, $K_3\in (0,K_3^*)$, $B_0\in
(0,B_0^*)$, $\lambda_* < \min \left\{ \lambda_0/2 , \lambda - 2 \| \nabla V \|_{\infty}
 B_0^*\right\}$ and $t>0$,
\begin{align*}\Ee F(X_t,Y_t^\delta)\le & C_6(K_2,K_3,B_0)  e^{-\lambda_* t}\left( \Ee F(X_0,Y_0^\delta)+ \left( \mu_{X_0}(V) + \mu_{Y_0}(V) \right)^2 \right) \\
&+ \big( C(\delta) + \widetilde{C}(\delta) \big)/\lambda_*\\
\le& C_7(K_2,K_3,B_0)e^{-\lambda_* t}\left( \mu_{X_0}(V) +
\mu_{Y_0}(V) \right)^2+ \big( C(\delta) + \widetilde{C}(\delta) \big)/\lambda_*,
\end{align*} where in the second inequality we
used the fact that $$\Ee F(X_0, Y_0^{\delta}) \leq C_1 \left(
\mu_{X_0}(V) + \mu_{Y_0}(V) \right)\le C_1 \left( \mu_{X_0}(V) +
\mu_{Y_0}(V) \right)^2.$$ Letting $\delta\to0$ in the inequality
above, we can prove the desired assertion by following the arguments
from the final part of the proof of Theorem \ref{thtpw}.
\end{proof}

At the end of this part, we give the
proof of Theorem \ref{Main-1add}.
\begin{proof}[Proof of Theorem $\ref{Main-1add}$]
	Combining Theorem \ref{theoremMcKeanVlasovMultiplicative} with the fact that for all probability measures $\mu_1$, $\mu_2 \in \mathscr{P}_1(\R^d)$ we have $W_1(\mu_1,\mu_2) \leq CW_{\Phi}(\mu_1,\mu_2)$ for some constant $C > 0$ (cf.\ the proof of Corollary \ref{co2:le}), we obtain
	 \begin{equation}\label{eq:proof15}
	 W_1(\mu_{X_t}, \mu_{Y_t})\le C_1 e^{-\lambda_0t}  \left( \mu_{X_0}(V) + \mu_{Y_0}(V) \right)^2
	 \end{equation}
	  for some constant $C_1 > 0$.
	  Here the constant $\lambda_0 > 0$, the function $V$ and the measures $\mu_{X_0}$, $\mu_{Y_0}$, $\mu_{X_t}$ and $\mu_{Y_t}$ are as in the statement of Theorem \ref{theoremMcKeanVlasovMultiplicative}. Similarly as in the proof of
Theorem \ref{Main-1}, we can follow e.g.\ the proof of
 \cite[Theorem 3.1(2)]{Wang} and use \eqref{lem:var2--} to conclude that there exists a
 unique invariant measure $\mu$ for the solution $(X_t)_{t \geq 0}$ to (\ref{non-1}). Hence we can consider our bound (\ref{eq:proof15}) with $(Y_t)_{t \geq 0}$ initiated at $\mu_{Y_0} = \mu$, which concludes the proof.
\end{proof}

\section{Propagation of chaos}\label{section4}

This section is devoted to the
proof of Theorem \ref{Main-4}.

\begin{proof}[Proof of Theorem $\ref{Main-4}$] The proof is based on the
idea of the proof for \cite[Theorem 2]{DEGZ}. For convenience, we denote $\tilde b_2 = b_2$, i.e., we assume that the drift $b(x,\mu)$ is given by
$$b(x, \mu) = b_1(x) + \int b_2(x-z) \,\mu(dz) = b_1(x) + b_2 \ast \mu(x).$$
In particular, we denote $K_{\tilde b_2} = K_{b_2}$ in condition \eqref{coe-b3}.
      We first follow part (i) of the proof of Theorem \ref{thtpw}. Based on the contractivity at infinity condition \eqref{cond:drif1} for $b_1(x)$, we construct a function $\psi$ as in Theorem \ref{thtpw}, with constants $K_1$, $K_2$, $l_0$ replaced by $K_{1,b_1}$, $K_{2,b_1}$, $r_{b_1}$, respectively.
      Now observe that due to \eqref{assumption:concentrationLevymeasure}, for $l_0 = r_{b_1}$ we can find a function $\sigma(r)$ satisfying condition \eqref{e:sigma} for $r \in (0,2r_{b_1}]$, cf. the proof of \cite[Theorem 1.1]{Lwcw}. Hence, proceeding as in \eqref{e:eth1}, we have
      \begin{equation}\label{e:auxProp1}
      \begin{split}
      &\psi'(|x-y|) \Big[K_{1,b_1}|x-y|\I_{\{|x-y|\le r_{b_1}\}}-K_{2,b_1}|x-y|\I_{\{|x-y|\ge r_{b_1}\}} \Big]+\widetilde L_Z\psi(|x-y|) \\
      & \le -\lambda_0 \psi(|x-y|),
      \end{split}
      \end{equation}
      where $\widetilde{L}_Z$ is the refined basic coupling operator for $Z$ and $\lambda_0 > 0$ is an explicit constant defined as in the proof of Theorem \ref{thtpw}, depending only on $K_{1,b_1}$, $K_{2,b_1}$, $r_{b_1}$ and properties of the L\'{e}vy measure $\nu$.
      In particular, the construction of $\psi$ and the constant $\lambda_0$ is independent of $K_{b_2}$.

Motivated by part (ii) of the proof of Theorem \ref{thtpw}, for any fixed $\delta > 0$ we consider the following coupling between the independent
McKean-Vlasov processes and the mean-field particle system
\begin{align*} \begin{cases}
  d \bar X_t^i=b_1(\bar X_t^i)+b_2*\mu_{\bar X_t^i}(\bar X_t^i)\,dt +d Z^i_t,\\
  d X_t^{i,n,\delta}=b_1(X_t^{i,n,\delta})+\frac{1}{n}\sum_{j=1}^nb_2(X_t^{i,n,\delta}-X_t^{j,n,\delta})\,dt +dZ^i_t+dL_t^{*,i,\delta},\quad 1\le i\le n,
  \end{cases}\end{align*} where
  $(L_t^{*,i,\delta})_{t\ge0}$ is defined by analogy to
  \eqref{Ldelta}, with $U_t^\delta$ replaced by $U_t^{i, \delta} := \bar X_t^i-X_t^{i,n,\delta}$.
  Here we assume that $(\bar X_0^i, X_0^{i,n,\delta})$ are independent random variables with the same law having finite second moments, and $(Z_t^i)_{t\ge0}$ are independent L\'evy processes associated with the same L\'evy measure $\nu$.
 We can follow step (ii) in the proof of Theorem \ref{thtpw} to show that the system above has a
 weak solution $(\bar X^i_t,X_t^{i,n,\delta})_{t\ge0}$ for all $1\le i\le n$.

  Below, let $U_t^{i,\delta}=\bar X_t^i-X_t^{i,n,\delta}$ for all $t>0$. Then, by \eqref{e:auxProp1}
    and \eqref{e:estimateused},
    for all $t>0$ and $1\le i\le n$,
  \begin{align*}d\psi(|U_t^{i,\delta}|)
  &\le \psi'(|U_t^{i,\delta}|)\frac{\langle b_1(\bar X_t^i)-b_1(X_t^{i,n,\delta}), U_t^{i,\delta}\rangle}{|U_t^{i,\delta}|}\,dt+\widetilde L_Z^\delta \psi(|U_t^{i,\delta}|)\,dt\\
  &\quad +\psi'(|U_t^{i,\delta}|)\frac{\langle b_2*\mu_{\bar X_t^i}(\bar X_t^i)-\frac{1}{n}\sum_{j=1}^nb_2(X_t^{i,n,\delta}-X_t^{j,n,\delta}), U_t^{i,\delta}\rangle}{|U_t^{i,\delta}|}\,dt+dM_t^{\psi,i,\delta}\\
&\le -\lambda_0 \psi(|U_t^{i,\delta}|)+\psi'(|U_t^{i,\delta}|) A^{i,\delta}_t\,dt+dM_t^{\psi,i,\delta} \\
&\quad + \lambda_0 \psi(|U_t^{i,\delta}|) \left(1 - \phi_{\delta}(|U_t^{i,\delta}|)\right) + \psi'(0) K_{1,b_1} |U_t^{i,\delta}| \left(1 - \phi_{\delta}(|U_t^{i,\delta}|)\right),
\end{align*} where $\psi$ and $\lambda_0$ are as described above,
 $\widetilde L_Z^\delta$ is
defined by \eqref{truncatedCoupling},
$(M_t^{\psi,i,\delta})_{t\ge0}$ is a martingale, and
  $$A^{i,\delta}_t:= \left|b_2*\mu_{\bar X_t^i}(\bar X_t^i)-\frac{1}{n}\sum_{j=1}^nb_2(X_t^{i,n,\delta}-X_t^{j,n,\delta})\right|.$$
In the argument above we used the facts that $\psi'$ is
decreasing and that $b_1(x)$ is one-sided Lipschitz with the
constant $K_{1,b_1}$.
We note again that the construction of $\psi$ and the constant
$\lambda_0$ is independent of $K_{b_2}$.

For notational convenience, let us drop the superscript $\delta$ for now (i.e., denote $A_t^i := A_t^{i,\delta}$, $X_t^{i,n} := X_t^{i,n,\delta}$ and $U_t^i := U_t^{i, \delta}$) and estimate $A_t^i$ as follows
  \begin{align*}A^i_t&\le \frac{1}{n}\sum_{j=1}^n|b_2(\bar X_t^i-\bar X_t^j)-b_2(X_t^{i,n}-X_t^{j,n})|+ \left|b_2*\mu_{\bar X_t^i}(\bar X_t^i)-\frac{1}{n}\sum_{j=1}^nb_2(\bar X_t^{i}-\bar X_t^{j})\right|\\
  &\le \frac{\eta}{n}\sum_{j=1}^n\left(\psi (|U_t^i|)+\psi(|U_t^j|)\right)+ \left|b_2*\mu_{\bar X_t^i}(\bar X_t^i)-\frac{1}{n}\sum_{j=1}^nb_2(\bar X_t^{i}-\bar X_t^{j})\right|\\
  &=:\frac{\eta}{n}\sum_{j=1}^n\left(\psi (|U_t^i|)+\psi(|U_t^j|)\right)+ A_t^{i,*},  \end{align*} where $\eta:=\eta(K_{b_2})=K_{b_2}/c_1$ with the constant $c_1$ defined as in Theorem \ref{thtpw} (independent of $K_{b_2}$), and in the second inequality we used condition (1-ii) and the fact that $c_1r \le \psi(r)$ for all $r>0$.
  Hence, according to both estimates above and the fact that $\psi'(r)\le 1+c_1$ for all $r>0$, we arrive at
  \begin{equation*}\label{e:proofch1} \begin{split}\frac{1}{n}\sum_{i=1}^n\frac{d \Ee\psi(|U_t^i|)}{dt} \le &-\frac{\lambda_0-2(1+c_1)\eta}{n}\sum_{i=1}^n \Ee\psi(|U_t^i|)\,dt+\frac{(1+c_1)}{n}\sum_{i=1}^n \Ee A_t^{i,*}+C(\delta),\end{split}\end{equation*} where $C(\delta)\to0$ as $\delta\to 0$.
  In particular,
  choosing
  \begin{equation}\label{e:auxProp2}
  0<K_{b_2} < \lambda_0c_1/(2(1+c_1)),
  \end{equation}
        we find that \begin{equation}\label{e:proofch1} \begin{split}\frac{1}{n}\sum_{i=1}^n\frac{d \Ee\psi(|U_t^i|)}{dt} \le &-\frac{\lambda}{n}\sum_{i=1}^n \Ee\psi(|U_t^i|)\,dt+\frac{(1+c_1)}{n}\sum_{i=1}^n \Ee A_t^{i,*}+C(\delta),\end{split}\end{equation} where $\lambda:=\lambda_0-2(1+c_1)\eta>0$ due to \eqref{e:auxProp2}.

 Furthermore, given $\bar X_t^i$, the random variables $\bar X_t^j,$ $j\neq i$, are i.i.d. with the same law $\mu_{\bar X_t^i} =: \mu_{X_t}$. In particular, $$\Ee(b_2(\bar X_t^i-\bar X_t^j)|\bar X_t^i)=b_2*\mu_{\bar X^i_t}(\bar X^i_t).$$ Due to condition (1-ii) (particularly, $b_2(0)=0$ and $|b_2(z)|\le K_{b_2}|z|$ for all $z\in \R^d$),
  \begin{align*}\Ee A_t^{i,*}&\le \Ee\left| b_2*\mu_{\bar X_t^i}(\bar X_t^i)-\frac{1}{n-1}\sum_{j=1}^nb_2(\bar X_t^{i}-\bar X_t^{j})\right|\\
  &\quad +\left(\frac{1}{n-1}-\frac{1}{n}\right)\sum_{j=1}^n\Ee| b_2(\bar X_t^{i}-\bar X_t^{j})|\\
  &\le \left(\Ee \left(\Ee\left( \left|b_2*\mu_{\bar X^i_t}(\bar X^i_t)-\frac{1}{n-1}\sum_{j=1}^nb_2(\bar X_t^{i}-\bar X_t^{j})\right|^2\bigg| \bar X_t^i\right)\right)\right)^{1/2}\\
  &\quad+ \sqrt{2}\left(\frac{1}{n-1}-\frac{1}{n}\right)nK_{b_2} \left(\int_{\R^d}|z|^2\,\mu_{X_t}(dz)\right)^{1/2} \\
  &\le \left(\Ee\left(\frac{1}{n-1}\var_{\mu_{X_t}}(b_2(\bar X_t^i-\cdot))\right)\right)^{1/2}\\
  &\quad+ \sqrt{2}\left(\frac{1}{n-1}-\frac{1}{n}\right)nK_{b_2} \left(\int_{\R^d}|z|^2\,\mu_{X_t}(dz)\right)^{1/2} \\
  &\le K_{b_2}\left(\frac{1}{\sqrt{n-1}} + \frac{\sqrt{2}}{n-1}\right)\left(\int_{\R^d}|z|^2\,\mu_{X_t}(dz)\right)^{1/2}. \end{align*}
We now need to bound the second moment of $(X_t)_{t \geq 0}$.
 To this end, consider \begin{equation}\label{e:auxProp3}
\begin{split}
L[\mu] f(x)=&\langle b_1(x), \nabla f(x)\rangle+ \bigg\langle\int b_2(x-z)\,\mu(dz), \nabla f(x)\bigg\rangle\\
&+\int(f(x+z)-f(x)-\langle\nabla f(x), z\rangle\I_{\{|z|\le
1\}})\,\nu(dz),
\end{split}
\end{equation} (cf.\ \eqref{e:inf1}) and apply it to $f(x) = |x|^2$. According to condition (1-i), there is a constant $C_1>0$ such that
$$\langle b_1(x),\nabla f(x) \rangle = 2 \langle b_1(x),x\rangle\le -2K_{2,b_1}|x|^2+2|b_1(0)||x|+C_1,\quad x\in \R^d.$$
Moreover, for the second term in \eqref{e:auxProp3}, due to the fact that $|b_2(z)| \leq K_{b_2}|z|$ for all $z \in \R^d$, we get
\begin{align*}
\bigg\langle\int b_2(x-z)\,\mu(dz), \nabla f(x)\bigg\rangle&=2 \int \langle b_2(x-z) , x \rangle \,\mu(dz) \\
&\leq 2 K_{b_2} \int |x-z|\cdot |x| \,\mu(dz)\\
& \leq 2 K_{b_2} |x|^2 + 2 K_{b_2} |x| \int |z| \,\mu(dz) \,.
\end{align*}
 Finally, since $\int_{\R^d}|z|^2\,\nu(dz)<\infty$, there is a constant $C_2>0$ such that for all $x\in \R^d$,
\begin{align*} &\int(f(x+z)-f(x)-\langle\nabla f(x), z\rangle\I_{\{|z|\le
1\}})\,\nu(dz)\\
&=\int_{\{|z|\le 1\}} \left(|x+z|^2-|x|^2-2\langle x,z\rangle\right)\,\nu(dz)+ \int_{\{|z|>1\}} \left(|x+z|^2-|x|^2\right)\,\nu(dz)\\
&\le C_2(1+|x|).\end{align*}
Hence, we get that there is a constant $C_3 > 0$ such that
\begin{align*}
L[\mu] f(x)&\le (-2K_{2,b_1}+2K_{b_2})|x|^2+2K_{b_2}|x|\int |z|\,\mu(dz)\\
&\quad +(2|b_1(0)|+C_2)|x|+C_1+C_2\\
&\le (-2K_{2,b_1}+4K_{b_2})|x|^2+K_{b_2}\int |z|^2\,\mu(dz)+C_3,
\end{align*} where in the second inequality we used the facts that $2ab\le a^2+b^2$ for all $a,b\ge0$, $\left(\int |z|\,\mu(dz)\right)^2\le \int |z|^2\,\mu(dz)$, and we chose the constant $C_3 > 0$ so that
$$(2|b_1(0)|+C_2)|x| + C_1 + C_2 \le  K_{b_2}|x|^2+C_3,\quad x \in \R^d.$$
Using the inequality above and following
the argument of Lemma \ref{lem:var2}, we can get that for any
$0<K_{b_2}<2K_{2,b_1}/5$,
$$\int_{\R^d}|z|^2\,\mu_{X_t}(dz)=\Ee |X_t|^2\le \Ee|X_0|^2 +C_4$$ holds with some constant $C_4>0$, depending only on the second moment of the L\'{e}vy measure $\nu$, $b_1(0)$ and the constants in condition \eqref{cond:drif1}. Hence, for all $1\le i\le n$,
$$\Ee A_t^{i,*}\le  \frac{C_5}{\sqrt{ n-1}},$$
where $C_5 := K_{b_2} (\sqrt{2} + 1) \left( \Ee |X_0|^2 + C_4 \right)^{1/2}$.
This, along with \eqref{e:proofch1}, yields that if $K_{b_2} <\min\{2K_{2,b_1}/5, \lambda_0c_1/(2(1+c_1))\}$, then
$$\frac{1}{n}\sum_{i=1}^n\frac{d \Ee\psi(|U_t^{i,\delta}|)}{dt} \le -\frac{\lambda}{n}\sum_{i=1}^n \Ee\psi(|U_t^{i,\delta}|)\,dt+\frac{C_5}{\sqrt{n-1}} + C(\delta).$$
Therefore,
$$ \frac{1}{n}\sum_{i=1}^n \Ee\psi(|U_t^{i}|)\le e^{-\lambda t}\frac{1}{n}\sum_{i=1}^n \Ee\psi(|U_0^{i}|)+\frac{C_5}{\lambda\sqrt{n-1}} + \widetilde{C}(\delta),$$
where $\widetilde{C}(\delta) \to 0$ as $\delta \to 0$.
Combining this with the facts that
$W_\psi(\mu_t, \mu_t^n)\le  \Ee\psi(|U_t^{i}|)$ for all $1\le i\le n$ and $c_1r\psi(r)\le (1+c_1)r$ yields the desired assertion.
\end{proof}

\ \

\medskip

\noindent \textbf{Acknowledgements.}
We thank the two anonymous referees for their helpful
comments and valuable corrections that have led to significant improvements of the presentation in the article. The research of Mingjie Liang is supported by the Science Foundation of the Education Department of Fujian Province (No.\ JAT190701) and the National Science Foundation of Sanming University (No.\ B201914). The majority of this work was completed when Mateusz B.\ Majka was affiliated to the University of Warwick and supported by the EPSRC grant no.\ EP/P003818/1.
The research of Jian Wang is supported by the National Natural Science Foundation of China (Nos.\ 11831014 and 12071076),  the Program for Probability and Statistics: Theory and Application (No.\ IRTL1704) and the Program for Innovative Research Team in Science and Technology in Fujian Province University (IRTSTFJ).


\begin{thebibliography}{99}

\bibitem{AP} Agarwal, A. and Pagliarani, S.: A Fourier-based Picard-iteration approach for a class of McKean-Vlasov SDEs with L\'{e}vy jumps,
to appear in \emph{Stochastics}, see also arXiv:1812.05026.

\bibitem{ADF} Andreis, L., Dai Pra, P. and Fischer, M.: McKean-Vlasov limit for interacting systems with simultaneous jumps, \emph{Stoch. Anal. Appl.},  {\bf 36}, 960--995, 2018.

\bibitem{APS1} Arapostathis, A., Pang, G. and Sandri\'{c}, N.: Ergodicity of a L\'{e}vy-driven SDE arising from multiclass many-server queues, \emph{Ann. Appl. Probab.}, {\bf29}, 1070--1126, 2019.

\bibitem{APS2} Arapostathis, A., Pang, G. and Sandri\'{c}, N.:
Subexponential upper and lower bounds in Wasserstein distance for Markov processes, arXiv:1907.05250.

\bibitem{BR} Barbu, V. and R\"{o}ckner, M.: From nonlinear Fokker-Planck equations to solutions of distribution dependent SDE,
\emph{Ann. Probab.}, {\bf 48}, 1902--1920, 2020.

\bibitem{BCD} Benazzoli, C., Campi, L. and Di Persio, L.: Mean field games with controlled jump-diffusion dynamics: Existence results and an illiquid interbank market model,
to appear in \emph{Stoch. Proc. Appl.}, see also arXiv:1703.01919.

\bibitem{Bu} Butkovsky, O.: On ergodic properties of nonlinear Markov chains and stochastic McKean-Vlasov equations, \emph{Theory Probab. Appl.}, {\bf58}, 661--674, 2014.

\bibitem{CHM} Cai, Y., Huang, J. and Maroulas, V.: Large deviations of mean-field stochastic differential equations with jumps, \emph{Statist. Probab. Lett.}, {\bf96}, 1--9, 2015.

\bibitem{Carmona} Carmona, R. and Delarue, F.: \emph{Probabilistic Theory of Mean Field Games with Applications I-II}, Springer, 2017.

\bibitem{CG} Cattiaux, P. and Guillin, A.: Semi log-concave Markov diffusions, in: \emph{S\'{e}minaire de Probabilit\'{e}s XLVI}, Lecture Notes in
Mathematics Vol.\ {\bf 2123}, pp. 231--292, Springer Verlag, 2015.


\bibitem{CGM} Cattiaux, P., Guillin, A. and Malrieu, F.: Probabilistic approach for granular media equations in the non uniformly convex case, \emph{Probab. Theory Related Fields}, {\bf140}, 19--40, 2008.

\bibitem{Chen}
Chen, M.-F.: \emph{Eigenvalues, Inequalities, and Ergodic Theory,} Springer-Verlag London, Ltd., London,
2005.

\bibitem{CSZ}
Chen, Z.-Q.,  Song, R. and Zhang, X.:  Stochastic flows for L\'evy processes with H\"{o}lder drifts, \emph{Revista Matem\'{a}tica Iberoamericana}, {\bf34}, 1755--1788, 2018.


\bibitem{DST} dos Reis, G., Salkeld, W. and Tugaut, J.: Freidlin-Wentzell LDP in path space for McKean-Vlasov equations and the functional iterated logarithm law, \emph{Ann. Appl. Probab.}, {\bf29}, 1487--1540, 2019.


    \bibitem{DEGZ} Durmus, A., Eberle, A., Guillin, A. and Zimmer, R.: An elementary approach to uniform in time propagation of chaos,
to appear in \emph{Proceedings AMS},
see also arXiv:1805.11387.


    \bibitem{EGZ} Eberle, A., Guillin, A. and Zimmer, R.: Quantitative Harris type theorems for diffusions and McKean-Vlasov processes,
 \emph{Trans. Amer. Math. Soc.}, {\bf371}, 7135--7173, 2019.

\bibitem{Gra} Graham, C.: McKean-Vlasov Ito-Skorohod equations, and nonlinear diffusions with discrete jump sets, \emph{Stoch. Proc. Appl.}, {\bf40}, 69--82, 1992.

\bibitem{GK} Gy\"{o}ngy, I. and Krylov, N.V.: On stochastic equations with respect to semimartingales. I., \emph{Stochastics}, {\bf4}, 1--21, 1980/81.

\bibitem{HMS} Hairer, M., Mattingly, J.C. and Scheutzow, M.: Asymptotic coupling and a general form of Harris' theorem with applications to stochastic delay equations,
 \emph{Probab. Theory Related Fields}, {\bf149}, 223--259, 2011.


 \bibitem{HM} Hairer, M. and Mattingly, J.C.: Yet another look at Harris's ergodic theorem for Markov chains. In \emph{Seminar on Stochastic Analysis, Random Fields and Applications
 VI, Vol. 63 of Progr. Probab.}, 109--117. Birkh\"{a}user/Springer Basel AG, Basel, 2011.

\bibitem{HSS} Hammersley, W., Siska, D. and Szpruch, L.: McKean-Vlasov SDEs under measure dependent Lyapunov conditions, arXiv:1802.03974.


\bibitem{HMW} Huang, L.-J., Majka, M.B. and Wang, J.: Approximation of heavy-tailed distributions via stable-driven SDEs, arXiv:2007.02212.

\bibitem{HRW} Huang, X., R\"{o}ckner, M. and Wang, F.-Y.: Nonlinear Fokker-Planck equations for probability measures on path space and path-distribution dependent SDEs, \emph{Discrete and Continuous Dynamical Systems}, {\bf 39}, 3017--3035, 2019.

\bibitem{HaoLi} Hao, T. and Li, J.: Mean-field SDEs with jumps and nonlocal integral-PDEs, \emph{NoDEA Nonlinear Differential Equations Appl.}, {\bf23}, Art. 17, 51, 2016.

\bibitem{JMW} Jourdain, B., M\'el\'eard, S. and Woyczynski, W.A.: Nonlinear SDEs driven by L\'evy processes and related PDEs, \emph{ALEA}, {\bf4}, 1--29, 2008.

\bibitem{Kac}
Kac, M.: \emph{Foundations of kinetic theory}, in: Proceedings of the Third Berkeley Symposium on Mathematical Statistics and Probability, 1954-1955, vol. III, pp.\ 171--197, University of California Press, Berkeley and Los Angeles, 1956.

\bibitem{Kol} Kolokoltsov, V.N.: \emph{Nonlinear Markov Processes and Kinetic Equations,} Cambridge University Press, Cambridge,
2010.

\bibitem{Kul} Kulik, A.M.: Exponential ergodicity of the solutions to SDE's with a jump noise, \emph{Stoch. Proc. Appl.}, {\bf119}, 602--632, 2009.

\bibitem{kuehn-rs}
K\"uhn, F. and Schilling, R.L.: Strong convergence of the Euler--Maruyama approximation for a class of L\'evy-driven SDEs,  \emph{Stoch. Proc. Appl.} {\bf129}, 2654--2680, 2019.

\bibitem{Li} Li, J.: Mean-field forward and backward SDEs with jumps and associated nonlocal quasi-linear integral-PDEs, \emph{Stoch. Proc. Appl.}, {\bf128}, 3118--3180, 2018.

\bibitem{LSW} Liang, M., Schilling, R.L. and Wang, J.: A unified approach to coupling SDEs driven by L\'{e}vy noise and some applications,
\emph{Bernoulli}, {\bf26}, 664--693, 2020.

\bibitem{LW14}
Luo, D. and Wang, J.: Coupling by reflection and H\"{o}lder
regularity for non-local operators of variable order,
\emph{Trans. Amer. Math. Soc.}, \textbf{371}, 431--459, 2019.

\bibitem{Lwcw} Luo, D.  and Wang, J.: Refined basic couplings and Wasserstein-type distances for SDEs with L\'evy noises,  \emph{Stoch. Proc. Appl.} {\bf129},  3129--3173, 2019.

\bibitem{Maj} Majka, M.B.: Coupling and exponential ergodicity for stochastic differential equations driven by L\'evy processes, \emph{Stoch. Proc. Appl.}, {\bf127}, 4083--4125, 2017.

\bibitem{Maj2} Majka, M.B.: Transportation inequalities for non-globally dissipative SDEs with jumps via Malliavin calculus and coupling, \emph{Ann. Inst. Henri Poincar\'{e} Probab. Stat.}, {\bf55}, 2019--2057, 2019.


\bibitem{Maj3} Majka, M.B.: A note on existence of global solutions and invariant measures for jump SDEs with locally one-sided Lipschitz drift,
\emph{Probab. Math. Statist.}, {\bf40}, 37--55, 2020.

\bibitem{Mas} Masuda, H.: Ergodicity and exponential $\beta$-mixing bounds for multidimensional diffusions with jumps, \emph{Stoch. Proc. Appl.}, {\bf117}, 35--56, 2007.


\bibitem{MSSZ} Mehri, S., Scheutzow, M., Stannat, W. and Zangeneh, B.Z.: Propagation of chaos for stochastic spatially structured neuronal networks with fully path dependent delays and monotone coefficients driven by jump diffusion noise,
    \emph{Ann. Appl. Probab.},  {\bf 30}, 175--207, 2020.

\bibitem{Me}
M\'{e}l\'{e}ard, S.:  \emph{Asymptotic behaviour of some interacting particle systems; McKean-Vlasov and
Boltzmann models}, in : Probabilistic models for nonlinear partial differential equations (Montecatini
Terme, 1995), Lecture Notes in Math.,  vol. {\bf 1642}, pp.\ 42--95, Springer, Berlin, 1996.


\bibitem{MT} Meyn, S.P. and Tweedie, R.T.: Stability of Markovian processes. III. Foster-Lyapunov criteria for continuous-time processes, \emph{Adv. Appl. Probab.},
{\bf25}, 518--548, 1993.

\bibitem{Po1}
Priola, E.: Pathwise uniqueness for singular SDEs driven by stable processes, \emph{Osaka J. Math.} \textbf{49}, 421--447, 2012.


\bibitem{Song} Song, Y.: Gradient estimates and exponential ergodicity for mean-field SDEs with jumps,
\emph{J. Theoret. Probab.}, {\bf33}, 201--238,
2020.


\bibitem{SW2} Schilling, R.L. and Wang, J.: On the coupling property and the Liouville theorem for Ornstein-Uhlenbeck processes, \emph{J. Evol. Equ.}, {\bf12}, 119--140,
2012.

\bibitem{Sznit} Sznitman, A.-S.: \emph{Topics in propagation of chaos},  in: \'{E}cole d'\'{E}t\'{e} de Probabilit\'{e}s de Saint-Flour XIX--1989, Lecture Notes in Math., vol {\bf 1464},  pp.\ 165--251, Springer, Berlin, 1991.

\bibitem{TTW}
Tanaka, H., Tsuchiya, M. and Watanabe, S.: Perturbation of drift-type for L\'evy processes, \emph{J. Math. Kyoto Univ.} \textbf{14}, 73--92, 1974.


\bibitem{Vil} Villani, C.: \emph{Optimal Transport: Old and New}, Springer-Verlag, Berlin, 2009.

\bibitem{Wang} Wang, F.-Y.: Distribution dependent SDEs for Landau type equations, \emph{Stoch. Proc. Appl.}, {\bf128}, 595--621, 2018.

\bibitem{W08} Wang, J.: Criteria for ergodicity of L\'evy type operators in dimension one, \emph{Stoch. Proc. Appl.}, {\bf118},  1909--1928, 2008.

\bibitem{W12} Wang, J.: On the exponential ergodicity for L\'{e}vy driven Ornstein--Uhlenbeck processes,  \emph{J. Appl. Probab.}, {\bf 49}, 990--1104, 2012.

\bibitem{JWang}
Wang, J.: $L^p$-Wasserstein distance for stochastic differential
equations driven by L\'{e}vy processes, \emph{Bernoulli}, \textbf{22}, 1598--1616, 2016.


\bibitem{Zhang1}
Zhang, X.: Stochastic differential equations with Sobolev drifts and driven by $\alpha$-stable processes,
 \emph{Ann. Inst. Henri Poincar\'{e} Probab. Stat.}, \textbf{49}, 1057--1079, 2013.

 \bibitem{Zi}
Zimmer, R.: \emph{Kantorovich contractions with explicit constants for diffusion semigroups},
PhD Thesis, University of Bonn, 2017.

\end{thebibliography}
\end{document}